\DeclareMathAlphabet{\mathpzc}{OT1}{pzc}{m}{it}
\newtheorem{theorem}{Theorem}[section]
\newtheorem{claim}[theorem]{Claim}
\newtheorem{corollary}[theorem]{Corollary}
\newtheorem{definition}[theorem]{Definition}
\newtheorem{lemma}[theorem]{Lemma}
\newtheorem{proposition}[theorem]{Proposition}
\newtheorem{remark}[theorem]{Remark}
\newenvironment{proof}[1][Proof]{\textbf{#1.} }{\ $\Box$ \\}
\def\r{\varrho}
\newcommand{\Vr}{V_{\r}}
\newcommand{\ro}{\r_0}
\newcommand{\Vro}{V_{\ro}}
\newcommand\Hog[1]{\mathcal{H}^{#1}}
\def\a{\alpha}
\def\T{T}
\newcommand\TT[1]{\T^{#1}}
\def\A{A}
\def\B{B}
\def\ka{a}
\def\kb{b}
\def\kd{d}
\def\Ap{\A_{p}}
\def\Bp{\B_{p}}
\def\Bq{\B_{q}}
\def\ap{\ka_{p}}
\def\bp{\kb_{p}}
\def\cp{c_{p}}
\def\ddp{d_{p}}
\def\FT{P}
\newcommand\FR[1]{G_{#1}}
\def\CC{\mathcal{C}}
\def\OO{\mathcal{O}}
\def\oo{\mathpzc{o}}
\def\re{\text{\rm Re}\,}
\def\im{\text{\rm Im}\,}
\newcommand\Kl[1]{K^{#1}}
\newcommand\Klt[1]{\tilde{ K}^{#1}}
\def\k{\ell}
\def\lo{k_0}
\def\ri{\k_0}
\def\ril{\k_1}
\def\CIn{\ka_V}
\def\Id{\text{\rm Id}\,}
\def\gdf{r_*}
\def\gd{r_0}
\def\rvl{r_{\k}^{>}}
\def\ss{\nu}
\def\ll{\sigma}
\def\rfm{r^{\leq}}
\def\rv{r^{>}}
\def\NN{\mathbb{N}}
\def\RR{\mathbb{R}}
\def\ZZ{\mathbb{Z}}
\def\C{\mathbb{C}}
\def\FN{R^*}
\def\Ra{\mathcal{R}}
\newcommand\X[2]{\mathcal{X}^{#1}_{#2}}
\renewcommand\L[1]{\mathcal{L}^{#1}}
\renewcommand\S[1]{\mathcal{S}^{#1}}
\newcommand\FF{\mathcal{F}}
\newcommand\BB{\mathcal{B}}
\def\TFBFM{Theorem~2.2 in~\cite{BFM2015b}}
\def\TFBFMe{Theorem~2.2 in~\cite{BFM2015b} }
\def\TFBFMa{Theorem~2.7 in~\cite{BFM2015b}}
\def\TFBFMae{Theorem~2.7 in~\cite{BFM2015b} }
\def\TFBFMf{Theorem~2.8 in~\cite{BFM2015b}}
\def\rl{s}
\def\rx{r}
\def\DS{\Sigma_{\rl,\rx}}
\def\DSf{\Sigma_{\rl^{\leq},\rfm}}
\def\Dl{D_{\lambda}}
\def\Dx{D_{x}}
\def\ii{\text{i}}
\def\re{\text{\rm Re}\,}
\def\im{\text{\rm Im}\,}
\def\dd{\,d}
\newcommand\XD[3]{\mathcal{Y}^{#1,#2}_{#3}}
\newcommand\Klp[2]{K^{#1}_{#2}}
\newcommand\KMil[1]{K^{\leq #1}}
\newcommand\Kalg[1]{K^{(#1)}}
\newcommand\RMil[1]{R^{\leq #1}}
\newcommand\Rl[1]{ R^{#1}}
\newcommand\Yl[1]{Y^{#1}}
\newcommand\YMil[1]{Y^{\leq #1}}
\newcommand\El[2]{E^{#1}_{#2}}
\newcommand\Ef[1]{E_{#1}}
\newcommand\Em[1]{E^{>#1}}
\newcommand\Ew[2]{\widehat E^{>#1}_{#2}}
\newcommand\ME[1]{\overline{#1}}
\newcommand\ZE[1]{\widetilde{#1}}
\def\fpa{\varphi}
\def\fqa{\psi}
\def\Mx{M_{p}}
\def\My{M_{q}}
\def\df{\ell_*}
\newcommand\range{\text{range}}
\def\Dl{D_{\lambda}}
\def\Dx{D_{x}}
\def\Dz{D_{z}}
\begin{document}

\title{Invariant manifolds of parabolic fixed points (I).
Existence and dependence on parameters}

\author{Inmaculada~Baldom\'a}
\ead{immaculada.baldoma@upc.edu}
\address{Departament de Matem\`{a}tiques,\\
         Universitat Polit\`{e}cnica de Catalunya,\\
         Diagonal 647, 08028 Barcelona, Spain}

\author{Ernest~Fontich}
\ead{fontich@ub.edu}
\address{Departament de Matem\`{a}tiques i Inform\`atica,\\
        Universitat de Barcelona, \\ Gran Via 585,
        08007 Barcelona, Spain }

\author{Pau~Mart\'{\i}n}
\ead{p.martin@upc.edu}
\address{Departament de Matem\`{a}tiques,\\
         Universitat Polit\`{e}cnica de Catalunya,\\
         Ed.~C3, Jordi Girona 1--3, 08034 Barcelona, Spain}

\begin{abstract}
    In this paper we study the existence and regularity of stable manifolds
    associated to fixed points of parabolic type in the differentiable and analytic cases,
    using the parametrization method.

    The parametrization method relies on a suitable approximate solution of
    a functional equation. In the case of parabolic points, if the manifolds have dimension two
    or higher, in general this approximation cannot be
    obtained in the ring of polynomials but as a sum of homogeneous functions and it is given in~\cite{BFM2015b}.
    Assuming a sufficiently good approximation is found, here we provide an ``a posteriori'' result which gives a
    true invariant manifold close to the approximated one. In the
    differentiable case, in some cases, there is a loss of
    regularity.

    We also consider the case of parabolic periodic orbits of
    periodic vector fields and the dependence of the manifolds on
    parameters. Examples are provided.

    We apply our method to prove that in several situations, namely, related to the
    parabolic infinity in the elliptic spatial three body problem, these invariant
    manifolds exist and do have polynomial approximations.
\end{abstract}

\maketitle

\tableofcontents


\section{Introduction}

Parabolic fixed points of maps (or parabolic periodic orbits, in the
case of flows) appear in general as bifurcation points but they  are
also present for all values of the parameters in important problems.
For instance, the ``parabolic infinity'' in several instances of the
three body problem. See
\cite{Moser01,SimoL80,Robinson84,Robinson15,Xia92,GuardiaMS14,DelshamsKRS14}.

The purpose of this work is, given a map with a parabolic fixed
point, that is, a point  where the map is tangent to the identity,
to provide conditions under which the parabolic point possesses a
stable invariant set (which in general will not contain a
neighborhood of the fixed point) which can be parametrized as a
regular invariant manifold. This is the first part of a two papers
work, being~\cite{BFM2015b} the second. In the second one, we study
the existence of approximate solutions of the invariance equation
that the parabolic invariant manifold should satisfy. Here we are
concerned with the existence of the actual manifold.

The existence of invariant manifolds of parabolic fixed points and
their  regularity has been considered in
\cite{McGehee73,Easton84,Robinson84}, when the dynamical system is
analytic and the stable manifold set is one dimensional. Invariant
manifolds of parabolic fixed points with nilpotent linear part were
studied in~\cite{Cas92,Cas97,Fon99}. In~\cite{MeissL16} the authors
use the manifolds of a parabolic point as pieces of the boundary of
regions with regular and ergodic behavior respectively for a
specially chosen family of two dimensional symplectic maps. The case
of stable manifolds of higher dimension, but still in the analytic
category, was considered in~\cite{BF2004}. All these works share the
use of the graph transform method to obtain the parabolic invariant
manifold.

The problem of parabolic fixed points in the context of holomorphic
maps has also been studied in a completely different approach
by~\cite{Hakim98,Ecalle85}. See also the survey~\cite{Abate15}.

When the map is not analytic, but $\CC^k$, $1$-dimensional stable
manifolds of parabolic point have been studied in~\cite{BFdLM2007}.
In this work, unlike the previously cited ones, is used the
parametrization method
\cite{CabreFL03a,CabreFL03b,CabreFL05,HdL2006,HdL2007}. See
also~\cite{HaroetAl}.

The procedure here is as follows. Let $F:U\subset \RR^n\times \RR^m
\to \RR^n\times \RR^m$ be a map and assume $(0,0)\in U$ is parabolic
point, i.e.,  $F(0,0) = (0,0)$ and  $DF(0,0) = \Id$. Assume
furthermore certain conditions on the first non-vanishing nonlinear
terms which imply some ``weak contraction'' in the
$(x,0)$-directions and some ``weak expansion'' in the
$(0,y)$-directions, to be specified later. Even if our conditions
are more general and in fact do not always imply ``weak expansion"
in the $(0,y)$-directions, for the sake of simplicity of this
introduction, let us assume that there is this expansion. Then one
looks for an invariant stable manifold~$W^s$ of the origin as an
immersion $K:V\subset \RR^n\to \RR^n\times \RR^m$, which we call
parametrization of the manifold, with $K(0) = (0,0)$, $DK(0) =
(\Id,0)^{\top}$, $\range (K) = W^s$ and satisfying the
\emph{invariance equation}
\begin{equation}
\label{eq:inv_equation} F \circ K = K \circ R,
\end{equation}
where $R: V\to V$ is a reparametrization of the dynamics of~$F$
on~$W^s$. In general, $V$ is a domain which contains $0$ on its
boundary. The procedure to find such~$K$ and~$R$ has two steps.
First, find functions $K^{\le}$ and $R$ solving approximately the
invariance equation, that is, satisfying
\begin{equation}
\label{eq:inv_approx} F \circ K^{\le}(x) - K^{\le} \circ R (x) =
\OO (\|x\|^{\ell}),
\end{equation}
for some $\ell$ large enough, depending on the degree of the first
non-vanishing nonlinear terms of~$F$ at $(0,0)$. Once these
functions are obtained, the invariance equation can be rewritten as
a fixed point equation for a perturbation of~$K^{\le}$ and solved in
appropriate Banach spaces.

Of course, if the invariance equation does have solutions $K$ and
$R$, they will not be unique, since for any diffeomorphism $T:V \to
V$, the functions $K \circ T$ and $T^{-1} \circ R \circ T$ also
satisfy the same equation. The same claim holds for the approximate
invariant equation~\eqref{eq:inv_approx} if, for instance, $T(x) = x
+ o(\|x\|)$. The parametrization method aims to obtain the
``simplest'' parametrization (or the parametrization that provides
the ``simplest'' $R$).

There are two important reasons to use the parametrization method to
obtain the invariant manifolds of a parabolic fixed point. The first
one, from the theoretical point of view, is that is better suited to
deal with cases of finite differentiability than the graph transform
method since the operators involved are more regular. The second one
is related to the computation of the approximate solutions of the
invariance equation. From a computational point of view, it provides
a way to explicitly obtain such approximations. And reciprocally, if
one is able to produce functions $K^{\le}$ and $R$ that are
approximate solutions of the invariance equation, then there exists
a true solution close to the given approximation. This is a type of
\emph{a posteriori} argument (see
\cite{GonzalezLJV05,HdL2006,HdL2007,GonzalezHL14,FLS09a,FLS09b,
FLS15}).

The parametrization method is used in~\cite{CabreFL03a,CabreFL03b}
to find nonresonant manifolds of fixed points of maps in Banach
spaces. In such setting, the approximations $K^{\le}$ and $R$ can be
taken as polynomials. The degrees of~$K^{\le}$ and~$R$ depend on the
spectrum of~$DF(0,0)$. The homogeneous terms of these polynomials
are found recursively. The homogenous term of degree~$j$ must
satisfy a linear equation which depends on the terms of degree~$i$,
for $1\le i \le j-1$. In solving these equations, $K^{\le}$ and~$R$
play different roles and are not unique, even in the class of
polynomials. A possible criterium to determine them is to look for
the ``simplest'' polynomial $R$, in the sense that the majority of
its coefficients vanish. This fact only depends on the spectrum
of~$DF(0,0)$.

In the case when the origin is parabolic and $n=1$, in
\cite{BFdLM2007} it is shown that it is also possible to find
polynomials~$K^{\le}$ and~$R$ which are approximate solutions of the
invariance equation. Again, these polynomials are not determined
uniquely, but there is a choice in which~$R$ is the ``simplest''.
Its degree only depends on the degree of the first non-vanishing
term of the contracting part. A related result was obtained
in~\cite{BH08} where the Gevrey character of the manifolds is
established for analytic maps.

The situation changes drastically when one considers invariant
manifolds of parabolic points of dimension two or more. Although
these cases were successfully dealt in the analytical
context~\cite{BF2004} by means of the graph transform method, a
simple  computation shows that generically there are no polynomial
approximate solutions of the invariance equation. In the spirit of
the parametrization method, if it is not possible to find
approximate solutions, the fixed point part of the argument cannot
be carried on. We remark that this fact implies that, generically,
the invariant manifolds obtained in~\cite{BF2004}, which are
analytic outside the origin, do not have a polynomial approximation.

In the present paper, we deal with the actual existence of the
invariant manifold and we study its regularity and dependence on
parameters, assuming that a suitable approximate solution of the
invariance equation is known. In the companion
paper~\cite{BFM2015b}, we derive a method to find such
approximations and their regularity. However, since, in general,
these approximations are not polynomials but sums of homogeneous
functions of increasing degree, we reproduce in
Section~\ref{sectionalgorithm} the algorithm derived
in~\cite{BFM2015b} to obtain them. It should be remarked that, in
general, these homogeneous functions need not be rational functions.
We also remark that the conditions under which these approximations
can be found allow several characteristic directions in the domain
under consideration (see~\cite{Hakim98,Abate15}).

When considering parabolic points, one has to look at the first
non-vanishing homogenous terms of the Taylor expansion of the map at
the parabolic point. One looks for ``contracting'' and
 ``expanding'' directions (in certain subsets) in the dynamics generated by the polynomial
map obtained by truncating the Taylor expansion of the map at the
parabolic point at the lowest non vanishing order in each component.
We will assume that the degree of all the ``contracting'' directions
is $N$, the degree of all the ``expanding'' directions is $M$,
without assuming that $N=M$. The fact that $N\neq M$ has
consequences both at a formal level, when solving the approximate
invariance equations, and at the analytical level, when considering
the fixed point equation that provides the manifold. In particular,
the behavior and regularity at the origin of the formal
approximation and the invariant manifold depend on the relation
between $N$ and $M$.

We remark that, as it is often the case, the hypotheses to carry out
the fixed point procedure are milder than the ones required for
solving the approximate invariance equation. The reason is that to
solve the fixed point equation it is enough to start with an
approximate solution having an error of prescribed high enough order
depending of the first non-vanishing nonlinear terms. Of course,
some care is required to deal with the regularity of the involved
objects.

We include in our study the dependence on parameters of the
invariant manifolds, which is rather cumbersome but useful for the
applications. In particular, it allows to derive the analogous
statement for flows from the one for maps. This is performed
separately for the actual manifold, in the present paper, and for
the approximate solutions of the invariance equation, in the
companion paper. The dependence on parameters of the invariant
manifolds in the case that they are one dimensional and the map is
analytic is already done in~\cite{Guardia15}, where it was used to
find regular foliations of the invariant manifolds of some parabolic
cylinders.

As a side application of our method, we prove that, in several
instances  of the three body problem, namely in perturbations of the
restricted spatial elliptic three body problem, the ``parabolic
infinity'' is foliated with parabolic fixed points with stable
manifolds of dimension two \emph{that have polynomial approximation
at the origin}.  This fact is rather surprising, since to be able to
solve the approximate invariance equations in the ring of
polynomials, one obtains a larger number of obstructions than
coefficients at each order. Then, the fixed point machinery works at
any order and as a result one obtains the invariant manifolds of the
``parabolic infinity'' and their expansion at the origin.

The structure of the paper is as follows. In
Section~\ref{sectionhypothesis} we  present the setting and
hypotheses as well as two theorems of existence of invariant
manifolds for maps. In Section~\ref{sec:parameters}, we present the
result concerning the regularity with respect to parameters and in
Section~\ref{sec:flow} we deal with the results for flows. In
Section~\ref{sectionalgorithm} we describe the algorithm
from~\cite{BFM2015b} developed to compute the approximate solutions
of the invariance equation. In Section~\ref{sec:esrtbp} we apply the
algorithm to the elliptic spatial restricted three body problem to
obtain the invariant manifolds of the `` parabolic infinity''. In
Section~\ref{sectionexamples} we provide two examples that show that
our hypotheses are indeed necessary and that the loss of
differentiability can take place. We remark the differences between
one-dimensional and multidimensional parabolic manifolds. The rest
of the paper is devoted to the actual proofs of the results.

\section{Main results}

This section is devoted to present all the results of this work
related to the existence and regularity of parametrizations of
invariant sets. There are three settings we consider: the map case
in Section~\ref{sectionhypothesis}, the dependence on parameters in
the map case in Section~\ref{sec:parameters} and the periodic flow
case in Section~\ref{sec:flow}.

\subsection{The map case}\label{sectionhypothesis}
The first result is a posteriori type theorem which assures the
existence of an invariant manifold close to a sufficiently good
approximate solution of the invariance
equation~\eqref{eq:inv_equation}. Then we provide sufficient
conditions to ensure the existence of an invariant manifold by means
of the results in~\cite{BFM2015b} about approximate solutions of the
invariance equation, that is, solutions of~\eqref{eq:inv_approx}.

\subsubsection{Set up}
Let $U\subset \RR^{n}\times \RR^{m}$ be an open set such that $0\in
U$. We consider $\CC^r$ maps  $ F:U \to \RR^{n+m}$, with $r$ to be specified later,
of the form
\begin{equation}\label{defF}
 F(x,y) = \left ( \begin{array}{c} x+  p(x,y) +  f(x,y) \\ y+ q(x,y)+ g(x,y)\end{array}\right ),\qquad x\in \RR^n, \,y\in \RR^m,
\end{equation}
where $ p$ and $ q$ are homogeneous polynomials of degrees $N\geq 2$
and $M\geq 2$ respectively,  $f(x,y)= \OO(\Vert
(x,y)\Vert^{N+1})$ and $g(x,y)=\OO(\Vert (x,y)\Vert^{M+1})$. With these conditions, the origin is a parabolic fixed
point of $F$.

We introduce the constants
\begin{equation}\label{defeta}
L=\min \{N,M\},\qquad \eta = 1+N-L.
\end{equation}

We denote the projection onto a variable as a subscript, i.e. $X_x$,
and by $B_{\r}$ the open ball centered at the origin of radius
$\r>0$.

Given $V\subset \RR^n$ such that $0\in \partial V$ and $\r>0$, we
introduce the set
\begin{equation*}
\Vr = V \cap B_{\rho}.
\end{equation*}
We will consider sets~$V$ \emph{star-shaped with respect to~$0$},
\emph{i.e.}, $0\in \partial V$ and, for all $x \in V$ and $\lambda
\in (0,1)$, $\lambda x \in V$.

We define the stable set of $F$ over~$V$ associated to the origin
$0$ as:
\begin{equation*}
W^{{\rm s}}_V = \{(x,y) \in U : F_x^k(x,y)\in V,\; k\geq0 ,\;
F^{k}(x,y)\to 0 \;\text{as}\; k\to \infty\}
\end{equation*}
and its local version, when we restrict $V$ to the set $\Vr$:
\begin{equation}
\label{defstablemanifoldlocal} W^{{\rm s}}_{V,\r} = \{(x,y) \in U :
F_x^k(x,y)\in \Vr,\; k\geq0 ,\; F^{k}(x,y)\to 0 \;\text{as}\; k\to
\infty\}.
\end{equation}

Let $V\subset \RR^n$ be an open star-shaped with respect to~$0$ set.
Take $\r>0$, some norms in $\RR^n$ and $\RR^m$ and consider the
following constants:
\begin{equation}\label{defconstants}
\begin{aligned}
&\ap = - \sup_{ x\in \Vr} \frac{\Vert  x+  p( x,0) \Vert -\Vert
x\Vert }{\Vert  x\Vert^{N}},
&\qquad &\bp  =\sup_{ x\in \Vr} \frac{\Vert  p( x,0)\Vert}{\Vert  x\Vert^{N}}, \\
&\Ap= - \sup_{ x\in \Vr} \frac{\Vert \Id + D_x p( x,0) \Vert
-1}{\Vert  x\Vert^{N-1}},
&\qquad &\Bp = \sup_{ x\in \Vr} \frac{\Vert \Id -D_x p( x,0) \Vert -1}{\Vert  x\Vert^{N-1}}, \\
&
\Bq = -\sup_{ x\in \Vr} \frac{\Vert \Id -D_y q( x,0) \Vert-1}{\Vert  x\Vert^{M-1}}, &&\\
&\cp  =
\begin{cases}
\;\;\ap,  & \text{if}\;\; \Bq \leq 0,\\
\;\;\bp, &  \text{otherwise,}
\end{cases}
& \qquad &\ddp  =
\begin{cases}
\;\;\ap,  & \text{if}\;\; \Ap\leq 0,\\
\;\;\bp, &  \text{otherwise},
\end{cases}
\end{aligned}
\end{equation}
where the norms of linear maps are the corresponding operator norms.
We emphasize that all the previous constants depend on $\r$.
Nevertheless there are some straightforward relations among them.
\begin{lemma}\label{defKapwell}
The constants $\Ap, \Bq, \Bp, \ap$ and $\bp$ are finite. They
satisfy $|\ap|\leq \bp$, $\Bp\geq \Ap$, $\ap\geq \Ap/N$ and $\Bp\geq
N\ap>0$.

In addition, if $0<\overline{\r}\leq \r$ and denoting by
$\overline{\Ap},\overline{\Bp},
\overline{\Bq},\overline{\ap},\overline{\bp}$ the corresponding
constants for $\overline{\r}$, we have that
$$
\overline{\Ap} \geq \Ap,\;\; \overline{\Bp}\leq \Bp,\;\;
\overline{\Bq}\geq \Bq,\;\;\overline{\ap}\geq \ap,\;\;\;
\overline{\bp}=\bp.
$$
\end{lemma}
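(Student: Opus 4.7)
The plan is to treat the four separate assertions one by one, leaning on elementary norm manipulations together with two structural facts: the polynomials $p$ and $q$ are homogeneous, and consequently Euler's identity $D_x p(x,0)\,x = N\,p(x,0)$ holds. Finiteness of $\bp$, $\Ap$, $\Bp$ and $\Bq$ is immediate, since the quotients defining them, such as $\|p(x,0)\|/\|x\|^N$ or $\|D_x p(x,0)\|/\|x\|^{N-1}$, are homogeneous of degree zero in $x$ and hence bounded. For $\ap$ the reverse triangle inequality $|\,\|x+p(x,0)\| - \|x\|\,| \leq \|p(x,0)\|$ delivers both finiteness and the bound $|\ap|\leq \bp$ in a single step. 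For $\Bp \geq \Ap$ I would start from the identity $2\,\Id = (\Id + D_x p(x,0)) + (\Id - D_x p(x,0))$ and apply the triangle inequality to get $\|\Id + D_x p(x,0)\| + \|\Id - D_x p(x,0)\| \geq 2$; rearranging and taking the supremum over $x\in\Vr$ gives the claim.

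The heart of the proof, and the step I expect to be the least automatic, is relating the ``polynomial'' constant $\ap$ to the ``linearization'' constants $\Ap$ and $\Bp$. Euler's identity is decisive here, since it rewrites $(\Id \pm D_x p(x,0))\,x = x \pm N\,p(x,0)$. For $\ap \geq \Ap/N$ I would decompose $x + p(x,0) = \tfrac{N-1}{N}\,x + \tfrac{1}{N}(\Id + D_x p(x,0))\,x$ and estimate via $\|(\Id + D_x p(x,0))\,x\| \leq \|\Id + D_x p(x,0)\|\,\|x\|$, which yields $\|x+p(x,0)\| - \|x\| \leq \tfrac{\|x\|}{N}(\|\Id + D_x p(x,0)\|-1)$; the stated inequality then follows by dividing by $\|x\|^N$ and taking supremum. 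The complementary decomposition $x + p(x,0) = \tfrac{N+1}{N}\,x - \tfrac{1}{N}(\Id - D_x p(x,0))\,x$, combined with the reverse triangle inequality, produces a matching lower bound on $\|x+p(x,0)\|-\|x\|$ and hence $\Bp \geq N\ap$.

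For the dependence on $\r$, the inclusion $V_{\overline{\r}}\subset \Vr$ means every supremum over the smaller set is bounded by the corresponding one over $\Vr$; reading this with the appropriate signs produces $\overline{\Ap}\geq\Ap$, $\overline{\Bp}\leq\Bp$, $\overline{\Bq}\geq\Bq$ and $\overline{\ap}\geq\ap$. The equality $\overline{\bp}=\bp$ is really a homogeneity observation: the quotient $\|p(x,0)\|/\|x\|^N$ is constant along rays in $V$, and since $V$ is star-shaped with respect to $0$ every direction in $V$ already sits inside $V_{\overline{\r}}$ for any $\overline{\r}>0$, so the radius plays no role. The only piece of the statement that is not purely algebraic is the strict positivity $\ap>0$ in $\Bp\geq N\ap>0$; I would view this as an appeal to the implicit weak contraction hypothesis on $p$ over $V$ built into the surrounding set-up, rather than as a consequence of the norm manipulations above.
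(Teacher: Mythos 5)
Your proposal is correct. Note that this paper does not actually prove the lemma: it is stated as a collection of ``straightforward relations'' and the proof is deferred to Section~3.1 of the companion paper~\cite{BFM2015b}, so there is no in-paper argument to compare against; your write-up supplies a self-contained verification. The core of your argument is sound: since $p(\cdot,0)$ is homogeneous of degree $N$, Euler's identity $D_xp(x,0)\,x=N\,p(x,0)$ holds, and your two convex-type decompositions $x+p(x,0)=\tfrac{N-1}{N}x+\tfrac1N(\Id+D_xp(x,0))x$ and $x+p(x,0)=\tfrac{N+1}{N}x-\tfrac1N(\Id-D_xp(x,0))x$ give exactly the pointwise bounds $\frac{\Vert x+p(x,0)\Vert-\Vert x\Vert}{\Vert x\Vert^{N}}\le \tfrac1N\frac{\Vert \Id+D_xp(x,0)\Vert-1}{\Vert x\Vert^{N-1}}$ and $\ge -\tfrac1N\frac{\Vert \Id-D_xp(x,0)\Vert-1}{\Vert x\Vert^{N-1}}$, which upon taking suprema yield $\ap\ge \Ap/N$ and $\Bp\ge N\ap$ (and then $\Bp\ge\Ap$ follows already from these two, so the separate $2\Id$-splitting argument, also correct, is optional). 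The reverse triangle inequality gives $|\ap|\le\bp$ and finiteness of $\ap$; the monotonicity in $\r$ is indeed just inclusion of domains read with the right signs, and the star-shapedness argument for $\overline{\bp}=\bp$ via degree-zero homogeneity of $\Vert p(x,0)\Vert/\Vert x\Vert^{N}$ is exactly right. Two small polish points: the quotients defining $\Ap,\Bp,\Bq$ are not themselves homogeneous of degree zero, so for their finiteness you should add the one-line bound $\bigl|\Vert\Id\pm D_xp(x,0)\Vert-1\bigr|\le\Vert D_xp(x,0)\Vert$ (and similarly for $D_yq$) before invoking homogeneity; and your reading of the strict inequality $N\ap>0$ as coming from the standing assumption $\ap>0$ (hypothesis H1, which is imposed in all the results that use the lemma) rather than from the algebra is the correct interpretation, since positivity of $\ap$ cannot be derived from the definitions alone.
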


This lemma is proven in Section 3.1 of~\cite{BFM2015b} (in a slightly
different set up)

As usual for parabolic points, their invariant manifolds are defined
over a subset $V$ such that $0 \notin \text{int} V$. For this
reason, in order to study the regularity of the invariant manifold
at the origin, we define the following natural concept:
\begin{definition}
\label{difrem} Let $V\subset \RR^l$ be an open set with
$x_0\in \overline{V}$ and $f:V\cup\{x_0\}\subset \RR^l \to \RR^k$. We say
that $f$ is $C^1$ at $x_0$ if $f$ is $C^1$ in $V\cap
(B_{\epsilon}(x_0)\setminus\{x_0\})$, for some $\varepsilon>0$ and $
\lim_{x\to x_0, \; x\in V} Df(x) $ exists.
\end{definition}

We finally introduce a quantity related with the minimum
differentiability degree we require to $F$:
\begin{equation}\label{defri}
\ri:=N-1 + \frac{\Bp}{\ap}+\max \left
\{\eta-\frac{\Ap}{\ddp},0\right \}.
\end{equation}
Note that $\ri \ge 2N-1 \ge N+1$.

\subsubsection{A posteriori result}\label{results1}
Let $V\subset \RR^{n}$ be open, star-shaped with respect
to~$0$. Assume that there exist appropriate norms in $\RR^{n}$ and
$\RR^{m}$ and $\r>0$ small enough such that
\begin{enumerate}
\item[H1]
The homogenous polynomial $ p$ satisfies that $\ap>0$,
\item[H2]
$ q( x,0)=0$, for $x\in \Vr$ and
\begin{equation*}
\begin{aligned}
&\Bq>0, & \qquad &\text{if  } M<N,\\
& \Bq>-N\ap, & \qquad &\text{if  } M=N,
\end{aligned}
\end{equation*}
\item[H3]
There exists a constant $\CIn >0$ such that, for all $ x\in \Vr$,
\begin{equation*}
\text{dist}( x+p( x,0), (\Vr)^{c}) \geq \CIn \Vert  x\Vert^{N}.
\end{equation*}
\end{enumerate}
\begin{remark} It is easily checked that if hypotheses H1, H2 and H3 hold true for $\r>0$, they
also hold for any $0< \overline{\r}\leq \r$,
so that we will take $\r$ as small as we need.
\end{remark}
\begin{theorem}\label{maintheorem}
Let $F:U\subset \RR^{n+m} \to \RR^{n+m}$ be a $\CC^r$ map, (the case
$r=\infty$ is also included) of the form \eqref{defF} with $U$ an
open set such that $0\in U$.

Assume that, there exists an open set $V$ and $\ro>0$ such that:
\begin{enumerate}
\item[(a)] Hypotheses H1, H2 and H3 hold for $\ro>0$.
\item[(b)] The degree of differentiability satisfies $r> \ri$ with $\ri$ defined in \eqref{defri}.
\item[(c)] There exist $K^{\leq}:\Vro \to U$ and $R:\Vro \to \Vro$, $\CC^{\rfm}$ functions, for some $\rfm \ge 1$,
of the form
\begin{equation*}
\begin{aligned}
&\Delta K^{\leq}(x) :=K^{\leq}(x) -(x,0) =\OO(\Vert x \Vert^{2}),&\;\; &D^j \Delta K^{\leq}(x)=\OO(\Vert x \Vert^{2-j}),& \\
& \Delta R(x):=R(x) -x-p(x,0) =\OO(\Vert x \Vert^{N+1}) ,&\;\; &D^j
\Delta R(x)=\OO(\Vert x \Vert^{N+1-j}),&
\end{aligned}
\end{equation*}
for $0\leq j \leq \rfm$, satisfying the invariance equation up to
order $\ell$ for $\ri <\ell \leq r$, i.e.:
$$
F \circ K^{\leq} - K^{\leq} \circ R = \OO(\Vert x \Vert^{\ell}).
$$
\end{enumerate}
Then, there exists $\r>0$ small enough and a unique function
$K^{>}:\Vr\to U$ such that $K^{>}(x) = \OO(\Vert x
\Vert^{\ell-N+1})$ and $K=\Kl{\leq} + \Kl{>}$ satisfies the
invariance equation
\begin{equation}\label{invcondtheorem}
F\circ K= K\circ R.
\end{equation}
Moreover, $R^k(x) \to 0$ as $k\to\infty$, $K_x$ is invertible and,
as a consequence,
\begin{equation}\label{KWs}
\{K(x)\}_{ x\in (K_x)^{-1} (\Vr)} \subset W^{{\rm s}}_{V,\r}.
\end{equation}

Concerning regularity, the parametrization~$K$ and the
reparametrization~$R$ on $W^{{\rm s}}_{V,\r}$ are $\CC^1$ functions
at the origin in the sense of Definition~\ref{difrem}. Moreover,
they are $\CC^{\rv}$ functions on $\Vr$ according to the cases
\begin{enumerate}
\item[(1)] If $\Ap\geq \eta \ddp$, $\rv=\min\{r,\rfm\}$.
\item[(2)] If $\Ap<\eta \ddp$, $\rv=\min\{r,\gd ,\rfm\}$ with $\gd$ defined by
\begin{equation}\label{defgd}
\gd = \max\left \{\displaystyle{k\in \NN : \left( \eta-
\frac{\Ap}{\ddp} \right) }k <r-\frac{\Bp}{\ap}-N+1\right\}.
\end{equation}
\item[(3)] If $F\in \CC^{\infty}$, then $\rv=\rfm$, where the case $\rfm=\infty$ is also included.
\end{enumerate}
In addition, if $F,\Kl{\leq}$ and $R$ are real analytic, $\Ap>\bp$
and item (c) is true for $j=0$, then $K$ is also real analytic.
\end{theorem}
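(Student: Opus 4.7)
The plan is to substitute $K = K^{\leq} + K^{>}$ into the invariance equation \eqref{invcondtheorem}, reduce it to a fixed-point equation for $K^{>}$, solve it by Banach's theorem in a suitable weighted Banach space of continuous maps on $\Vr$, and then obtain the higher regularity, the $\CC^1$-continuity at the origin, and (under the extra hypothesis) analyticity by bootstrapping the fixed-point argument to derivatives.

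\textbf{Setting up the fixed-point equation.} Let $E := F\circ K^{\leq} - K^{\leq}\circ R = \OO(\|x\|^{\k})$ be the approximation error from hypothesis (c), and set
$$
N(Z)(x) := F(K^{\leq}(x)+Z(x)) - F(K^{\leq}(x)) - Z(x).
$$
Substituting $K = K^{\leq}+K^{>}$ into $F\circ K = K\circ R$ gives the difference equation $K^{>}(R(x)) - K^{>}(x) = E(x) + N(K^{>})(x)$. Hypotheses H1 and H3 force $R$ to be a weak contraction whose iterates satisfy a Gronwall-type estimate $\|R^k(x)\|\lesssim \|x\|(1+c\|x\|^{N-1}k)^{-1/(N-1)}$ obtained by comparison with the flow of $\dot x = p(x,0)$; in particular $R^k(x)\to 0$. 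Telescoping the difference equation and letting $k\to\infty$ produces
$$
K^{>}(x) = -\sum_{k=0}^{\infty}\bigl[E(R^k(x)) + N(K^{>})(R^k(x))\bigr] =: \mathcal{T}(K^{>})(x),
$$
which is the fixed-point equation to solve.

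\textbf{Banach fixed point.} I work in the Banach space $\BB = \{Z \in \CC^0(\Vr,\RR^{n+m}) : \|Z(x)\| \le C\|x\|^{\k-N+1}\}$ with its natural weighted norm. The exponent $\k-N+1$ is the unique one for which the source series converges: summing $\|E(R^k(x))\|\lesssim \|R^k(x)\|^{\k}$ against the iterate asymptotics above yields precisely $\|x\|^{\k-N+1}$. Using H1, H2 (the assumption $q(x,0)=0$ is essential here to keep $K^{\leq}$ close to the $(x,0)$-slice so that the leading dynamics on it is given by $p$) and H3, one checks that $\mathcal{T}$ maps a small ball of $\BB$ into itself and is a strict contraction there, since $N$ is quadratic in $Z$. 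Banach's theorem yields the unique $K^{>}\in\BB$; invertibility of $K_x$ near $0$ follows from $DK^{\leq}(0)=(\Id,0)^{\top}$ and the smaller size of $K^{>}$, and the inclusion \eqref{KWs} is immediate from $R^k(x)\to 0$.

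\textbf{Regularity and analyticity.} Formally differentiating $\mathcal{T}$ produces, for each $j$, a linear fixed-point equation for $D^j K^{>}$ whose source involves derivatives of $F$, $K^{\leq}$ and $R$ up to order $j$ (this is where the bounds $r > \ri$ and $\rfm$ enter via the chain rule). Its associated linear operator has weighted norm controlled by sums of the form $\sum_k \|DR^k(x)\|^j$ against appropriate weights; sharp two-sided estimates for $R^k$ and $DR^k$ in terms of $\ap, \Ap, \Bp$ (again by comparison with the flow of $\dot x = p(x,0)$) translate the summability condition into $(\eta-\Ap/\ddp)\,j < r - \Bp/\ap - N + 1$, which is exactly the inequality defining $\gd$ in \eqref{defgd}. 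When $\Ap \geq \eta\ddp$ the left-hand side is $\leq 0$, the bootstrap runs unconditionally up to order $\min\{r,\rfm\}$, giving case (1); otherwise the largest admissible $j$ is $\gd$, giving case (2); case (3) then follows by induction. The $\CC^1$ property at the origin (Definition~\ref{difrem}) is obtained by showing the series for $DK^{>}$ tends to $0$ as $x\to 0$ within $\Vr$, so that $DK(0)=DK^{\leq}(0)$ and $DR(0)=\Id$. For analyticity, $\Ap > \bp$ permits extending all estimates on $R^k$ to a sectorial complex neighborhood of $\Vr$; the telescoping series converges uniformly on compact subsets, and Weierstrass gives holomorphy of~$K$.

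\textbf{Main obstacle.} The contraction argument itself is fairly standard once the correct weighted space is identified; the genuinely delicate step is case (2), i.e.\ pinpointing the exact loss of regularity. It requires sharp (not merely leading-order) two-sided asymptotics for both $\|R^k(x)\|$ and $\|DR^k(x)\|$ on suitable subdomains of $\Vr$, combined with careful bookkeeping of chain-rule terms, so that the summability threshold lands exactly at the value $\gd$ predicted by \eqref{defgd} rather than one order earlier.
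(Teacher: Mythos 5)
Your reduction to a fixed point is not the one that works here, and the gap is structural rather than cosmetic. You invert only the shift operator $Z\mapsto Z-Z\circ R$ by naive telescoping, and you push the entire linearized term into the ``nonlinearity'' $N(Z)=F(K^{\leq}+Z)-F(K^{\leq})-Z$, claiming $\mathcal{T}$ is a contraction ``since $N$ is quadratic in $Z$''. It is not: $N(Z)=[DF(K^{\leq})-\Id]\,Z+\OO(\|Z\|^2)$ has a linear part, and this is exactly the dangerous term. Its $y$-block is of size $\OO(\|x\|^{M-1})$ with a constant that the hypotheses do not make small (H2 only gives a \emph{lower} bound through $\Bq$, i.e.\ information about the contraction of $\Id-D_yq$, not an upper bound on $\|D_yq\|$), and its off-diagonal block $D_yp(x,0)$ is likewise uncontrolled. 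Quantitatively: in your weighted space with weight $\|x\|^{\ell-N+1}$, the summed linear part contributes $\sum_k\|DF(K^{\leq}(R^kx))-\Id\|\,\|R^kx\|^{\ell-N+1}\sim \|x\|^{\ell-N+1}\|x\|^{L-N}$ up to constants; when $M<N$ (so $L=M$) this is \emph{unbounded} as $x\to0$, so $\mathcal{T}$ does not even map your ball into itself, and when $M\geq N$ the resulting Lipschitz constant is governed by the full matrix norms of $D_xp$, $D_yp$, $D_yq$, which are not dominated by $\Bp$ and need not be beaten by $\ap(\ell-N+1)$ under the sole assumption $\ell>\ri$. The fact that your argument never uses the quantitative part of H2 (the $\Bq$ inequalities) is the telltale symptom: that hypothesis is precisely what controls the normal linearization.

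The paper avoids this by keeping the linearization on the left-hand side: it solves $\L{0}(K^{>})=\FF(K^{>})$ with $\L{0}(S)=(D\FT\circ\Kl{\leq})S-S\circ R$, whose inverse $\S{0}$ is a series weighted by the products $\prod_{m\le i}(D\FT)^{-1}\circ\Kl{\leq}\circ R^{m}$; a preliminary scaling $S_\delta$ in $y$ together with H2 yields $\|(D\FT)^{-1}(\Kl{\leq}(x))\|\le 1+\B\|x\|^{N-1}$ (Lemma~\ref{lem:weakexpansion}), and it is the growth $\big(\tfrac{u+k+i}{u+k}\big)^{\a\B/\ka}$ of these products versus the decay of $\|R^i(x)\|^{\ell}$ that produces the genuine threshold $\ell>N-1+\Bp/\ap+\max\{\eta-\Ap/\ddp,0\}=\ri$ — a mechanism your scheme replaces by the wrong quantity $\|DF-\Id\|$, because in the parabolic setting it is the norm of the \emph{inverse} differential along orbits, not the deviation of $DF$ from the identity, that must be summed. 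A secondary issue: your regularity bootstrap ignores the anisotropic loss encoded in the spaces $\X{\nu}{k}$ (each derivative costs $\eta=1+N-L$ in the weight, not $1$), which is needed when $M<N$ even to formulate the higher-order fixed-point equations correctly; the paper handles this via the operators $\L{j},\S{j}$ acting between $\X{0}{\k-j\eta}$-type spaces. So while your heuristic for where $\gd$ comes from (summing $\|DR^k\|^{j}$, giving the $j\Ap/\ddp$ contribution) points in the right direction, the underlying $\CC^0$ step as written fails, and with it the rest of the construction.
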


\subsubsection{Existence results of invariant manifolds}
As a corollary of Theorem~\ref{maintheorem} and the
work~\cite{BFM2015b} we can prove an existence result. We first
formulate the new set of hypotheses which are (as usual) slightly
stronger than the previous ones. They coincide with the ones assumed
in~\cite{BFM2015b} for the existence of approximated solutions of
the invariance equation~\eqref{eq:inv_equation}. We include them
here for completeness. We summarize the algorithm to find these
approximated solutions in Section~\ref{sectionalgorithm}.

Let $V\subset \RR^{n}$ be an open set such that $V\cup\{0\}$ is
convex. Assume that, with the appropriate norms in $\RR^{n}$ and
$\RR^{m}$, there exists $\r$ is small enough such that Hypothesis H3
is satisfied and
\begin{enumerate}
\item[H1']
The homogenous polynomial $ p$ satisfies that
\[
\ap>0.
\]
If $M>N$, we further ask $\Ap/\ddp>-1$.
\item[H2']
The homogenous polynomial $ q$ satisfies $q( x,0)=0$, for $x\in \Vr$
and
\begin{equation*}
\begin{aligned}
&\Bq>0, & \qquad &\text{if  } M<N,\\
& 2+\frac{\Bq}{\cp} >\max \left \{1 -\frac{\Ap}{\ddp} ,0
\right\},&\qquad & \text{if  } M=N.
\end{aligned}
\end{equation*}
\end{enumerate}

Unlike the hyperbolic case, as we claimed in Theorem~\ref{maintheorem}, here we can lose
differentiability  in
the case $\Ap <\eta \ddp$ even at points $x\in \Vr$ with $x\neq0$. In fact,
the formal approximation is only
$\CC^{\gdf}$ when $\Ap<\ddp$ and $M\geq N$, being~$\gdf$:
\begin{equation*}
\gdf = \begin{cases} \max\left \{\displaystyle{k\in \NN : \left( 1-
\frac{\Ap}{\ddp} \right) }
k <2+\frac{\Bq}{\cp}\right\}, & \quad \text{if  } M=N, \\
\max\left \{\displaystyle{k\in \NN : \left( 1- \frac{\Ap}{\ddp}
\right)} k < 2 \right \},& \quad \text{if  } M>N.
\end{cases}
\end{equation*}
See~\cite{BFM2015b}.

The existence result is as follows:
\begin{corollary} \label{maintheorem2}
Let $F:U\subset \RR^{n+m} \to \RR^{n+m}$ be a $\CC^r$ map, of the
form \eqref{defF}. Assume that, for some $\ro>0$, $r>\ri$ and that
hypotheses H1', H2' and H3 are satisfied in an open star-shaped with respecto to~$0$ set $V$.

Then, there exist $\r>0$ small enough and maps $K:\Vr \to U$ and
$R:\Vr\to \Vr$ solutions of the invariance
equation~\eqref{invcondtheorem} satisfying~\eqref{KWs}.

In addition, $K=\Kl{\leq}+ \Kl{>}$ with $\Kl{\leq}$ and $R$ provided
by \TFBFM.

The parametrization $K$ and the reparametrization $R$ on $W^{{\rm
s}}_{V,\r}$ are only $\CC^1$ functions at the origin restricting
them to the set $\Vr$ and they are $\CC^{\rv}$ functions on $\Vr$
and $\rv$ takes the values:
\begin{enumerate}
\item[(1)] If $\Ap\geq \eta \ddp$, $\rv=r$.
\item[(2)] If either $\ddp \leq \Ap<\eta \ddp$ or $M<N$, $\rv=\min\{r,\gd\}$ with $\gd$ defined in \eqref{defgd}.
\item[(3)] If $\Ap<\ddp$ and $M\geq N$, $\rv=\min\{r,\gd,\gdf\}$.
\item[(4)] If $F\in \CC^{\infty}$ and $\Ap\geq \ddp$, then $\Kl{>}\in \CC^{\infty}$.
\end{enumerate}

Moreover, if $F$ is real analytic and $\Ap>\bp$, $K$ is also real
analytic.

Finally, substituting H1' and H2' by the new conditions $\Ap>0$ and
$\Bq>0$, we have that $W^{{\rm s}}_{V,\r}=\{K(x)\}_{ x\in (K_x)^{-1}
(\Vr)}$.
\end{corollary}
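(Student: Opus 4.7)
The plan is to assemble the corollary as a direct consequence of Theorem~\ref{maintheorem} combined with the construction of approximate solutions carried out in~\cite{BFM2015b}. First I would verify that H1', H2' imply the weaker H1, H2 used in Theorem~\ref{maintheorem}, so that the a posteriori result is applicable. For H1' this is immediate since the additional condition $\Ap/\ddp>-1$ does not weaken $\ap>0$. For H2', when $M<N$ the condition $\Bq>0$ is the same; when $M=N$ the inequality $2+\Bq/\cp>\max\{1-\Ap/\ddp,0\}\geq 0$ gives $\Bq>-2\cp$, and recalling from Lemma~\ref{defKapwell} that $\cp\in\{\ap,\bp\}$ with $\bp\geq N\ap$, a short case analysis yields $\Bq>-N\ap$, which is precisely H2.

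Next I would invoke \TFBFMe to produce $\Kl{\leq}$ and $R$ of the form required in item (c) of Theorem~\ref{maintheorem}, solving the invariance equation to an order $\ell$ that can be chosen arbitrarily large subject to $\ell\leq r$ (hence in particular $\ell>\ri$, since $r>\ri$ by assumption). That theorem also provides the regularity of $\Kl{\leq}$ and $R$: globally $\CC^{\rfm}$ on $\Vr$, where $\rfm$ equals $r$ in the ``good'' cases and drops to $\min\{r,\gdf\}$ precisely when $\Ap<\ddp$ and $M\geq N$. With these $\Kl{\leq}$, $R$ in hand, Theorem~\ref{maintheorem} produces a unique $\Kl{>}$ so that $K=\Kl{\leq}+\Kl{>}$ satisfies~\eqref{invcondtheorem} and~\eqref{KWs}, and gives the $\CC^1$-regularity at the origin in the sense of Definition~\ref{difrem}.

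The regularity classification in items (1)--(4) then follows by combining the two regularities via $\rv=\min\{r,\gd,\rfm\}$ according to the cases of Theorem~\ref{maintheorem}: when $\Ap\geq\eta\ddp$ no loss occurs in either step so $\rv=r$; when $\ddp\leq\Ap<\eta\ddp$ there is loss from the fixed point argument but not from the formal part, giving $\min\{r,\gd\}$; when $\Ap<\ddp$ and $M\geq N$ both losses compound, yielding $\min\{r,\gd,\gdf\}$; the $M<N$ subcase has no formal loss, matching item~(2). The $\CC^\infty$ statement follows from item (3) of Theorem~\ref{maintheorem} since $\Ap\geq\ddp$ prevents loss in the formal step, and the real analytic statement follows from the corresponding final clause of Theorem~\ref{maintheorem}, because $\Ap>\bp$ makes item (c) hold for $j=0$ automatically from the analytic formal construction of~\cite{BFM2015b}.

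The main obstacle, and the only point not directly inherited from the previous results, is the last assertion $W^{\rm s}_{V,\r}=\{K(x)\}_{x\in (K_x)^{-1}(\Vr)}$ under the strengthened hypotheses $\Ap>0$ and $\Bq>0$. Here I would prove the missing inclusion $\supseteq$ directly: given $(x,y)\in W^{\rm s}_{V,\r}$, I would use the strict contraction coming from $\Ap>0$ together with the strict expansion in the transverse direction coming from $\Bq>0$ to show that the stable set is locally a graph over the $x$-variable; then comparing orbits of points in $W^{\rm s}_{V,\r}$ with orbits of the form $K(R^k(x_0))$ via a telescopic/Gronwall-type estimate along the $y$-direction forces any $(x,y)\in W^{\rm s}_{V,\r}$ to coincide with $K(x_0)$ for a unique $x_0$ with $K_x(x_0)=x$, giving surjectivity onto the stable set. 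The delicate part is controlling the $y$-deviation under iteration with only polynomial rates, but the strict sign conditions $\Ap>0$, $\Bq>0$ ensure the standard parabolic graph argument works on $\Vr$ after shrinking $\r$.
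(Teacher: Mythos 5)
Your proposal follows the same route as the paper for all but the last assertion: check that H1', H2' imply H1, H2 (your case analysis for $M=N$ is essentially the paper's --- when $\Bq\le 0$ one has $\cp=\ap$, so $2+\Bq/\cp>0$ gives $\Bq>-2\ap\ge -N\ap$; note the fact you invoke, $\bp\ge N\ap$, is neither stated in Lemma~\ref{defKapwell} nor needed, since for $\Bq>0$ there is nothing to prove), then obtain $\Kl{\leq}$ and $R$ from \TFBFMe applied to the Taylor polynomial of $F$, verify item~(c) of Theorem~\ref{maintheorem} using that the error is a finite sum of homogeneous functions, and read off the regularity cases (1)--(4) and the analytic statement exactly as you describe. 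The one genuine divergence is the final claim $W^{{\rm s}}_{V,\r}=\{K(x)\}_{x\in (K_x)^{-1}(\Vr)}$ under $\Ap>0$, $\Bq>0$: the paper does not re-prove anything here, it cites Theorem~3.1 of~\cite{BF2004}, which states that under these sign conditions the stable set $W^{{\rm s}}_{V,\r}$ is the graph of a Lipschitz function, and then concludes immediately from the invertibility of $K_x=\Id+\OO(\Vert x\Vert^2)$ that the graph parametrized by $K$ exhausts it. You instead propose to establish the graph property directly via a comparison-of-orbits/Gronwall argument with polynomial rates. That is the right mechanism (it is what the graph-transform proof in~\cite{BF2004} amounts to), but as written it is an assertion rather than a proof: the delicate point you flag yourself --- controlling the transverse deviation under iteration when the contraction in $x$ and expansion in $y$ are only of orders $\Vert x\Vert^{N-1}$ and $\Vert x\Vert^{M-1}$, possibly with $N\neq M$ --- is precisely the technical content of that theorem, and your paragraph does not supply the estimates. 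So either carry out that parabolic graph argument in detail or, more economically, quote the existing result as the paper does; with the citation in place, the rest of your argument is complete and matches the paper's proof.
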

\begin{proof}
Obviously H1' implies H1. It remains to check that when $M=N$, the
condition in H2' implies that $\Bq>-N\ap$. This is immediate if $\Bq\geq 0$.
When $\Bq<0$, H2' implies $2\ap +\Bq>0$ and
hence $\Bq>-2\ap \geq -N\ap$.

Now we set a good enough initial approximation of the invariant
manifold $K$ by means of \TFBFM.

We take $\k\in \NN$ such that $\ri<\k\leq r$ with $\ri$ introduced
in~\eqref{defri} and we decompose our map $ F$ into
\begin{equation}
\label{def:FdecompPG}  F(x,y)=\FT(x,y) + \FR{\k}(x,y),
\end{equation}
where $\FT$ is the Taylor expansion of $ F$ up to degree~$\k-1$ and
$\FR{\k}(x)=\oo(\Vert x \Vert^{\k-1})$. In fact, since $\k\leq r$,
we actually have $\FR{\k}(x)=\OO(\Vert x \Vert^{\k})$.  We
apply \TFBFMe to $\FT$ to obtain $\Kl{\leq}$ and $ R$ such that
\begin{equation}\label{eqFTKlamain}
\FT \circ \Kl{\leq} - \Kl{\leq} \circ  R =\TT{\k},\qquad
\TT{\k}(x)=\oo(\Vert x \Vert^{\k-1}).
\end{equation}
Moreover, both $\Kl{\leq}$ and $R$ are sums of homogeneous functions
satisfying that $\Delta \Kl{\leq} :=\Kl{\leq}(x)-(x,0)=\OO(\Vert x
\Vert)^{2})$ and $\Delta R(x):R(x) - x - p(x,0)=\OO(\Vert x
\Vert^{N+1})$. By \TFBFM, $\Kl{\leq}$ and $R$ are analytic functions
if $\Ap>\bp$, $\CC^{\infty}$ functions if $\Ap=\bp$ and $\CC^{\gdf}$
if $\Ap<\bp$, therefore, $F, \Kl{\leq}$ and $R$ are $\CC^{r}$
functions if $\Ap\geq \bp$ and $\CC^{\min\{r,\gdf\}}$ functions
otherwise. We use the symbol $\rfm$ to denote the degree of differentiability in
each case.

Since $\FT$ is a polynomial, the remainder $\TT{\k}(x) = \OO(\Vert x
\Vert^{\k})$ is also a finite sum of homogeneous functions.
Therefore, using that the derivative of a homogeneous function of
degree $j$ is also a homogeneous function of degree $j-1$, we have
that, for any $0\leq j\leq \rfm$,
$$
D^j\Delta \Kl{\leq}(x) =\OO(\Vert x \Vert^{2-j}),\; \;\;\; D^j
\Delta  R(x) =\OO(\Vert x \Vert^{N+1-j}),\;\;\;\; D^j \TT{\k} (x)
=\OO(\Vert x \Vert^{\k-j}).
$$
Therefore, we are under the conditions of Theorem~\ref{maintheorem}
 which implies the stated existence and the
regularity in the present results.

The last statement follows from Theorem~3.1 in~\cite{BF2004} which
states that the stable set $W^{{\rm s}}_{V,\r}$ defined
in~\eqref{defstablemanifoldlocal} is the graph of a Lipschitz
function. Since $K_x(x) = x + O(\Vert x \Vert^2)$, it is invertible and the
result follows immediately since the new conditions
$\Ap,\Bq>0$ imply the hypotheses of the results in~\cite{BF2004}.
\end{proof}

Now we state a corollary from Theorem~\ref{maintheorem} and \TFBFMa.
\begin{corollary}\label{corollary2}
Assume the conditions in Corollary~\ref{maintheorem2} and take $\k$
such that $\ri < \k\leq r$. For $j=2,\cdots, \k-N$, let $K_{x}^j
:\Vr\to  \RR^{n}$ be $\CC^{\rv}$ homogeneous functions of degree
$j$. Denote
$$K_x^{*}(x)= x +\sum_{j=2}^{\k-N}K_x^j(x).$$

Then there exists $\FN:\Vr\to \RR^{n}$, a finite sum of $\CC^{\rv}$
homogeneous functions of order less than $\k-1$, of the form $\FN(x)
- x - p(x,0)=\OO(\Vert x \Vert^{N+1})$ such that for any $\CC^{\rv}$
function $\Delta R :\Vr \to \RR^{n}$ with $\Delta R (x) = \OO(\Vert
x \Vert^{\k})$ there exists a $\CC^{\rv}$ function $K$ satisfying
the invariance equation \eqref{invcondtheorem} with $R=\FN+\Delta
R$ and $ K_x(x) -K_x^{*}(x) =\OO(\Vert x \Vert^{\k-N+1})$.
\end{corollary}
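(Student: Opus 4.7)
The plan is to combine \TFBFMae, which produces a formal approximate solution of the invariance equation with a prescribed low order $x$-component, with the a posteriori Theorem~\ref{maintheorem}, which upgrades a sufficiently good approximation into a true invariant manifold. We first use \TFBFMae to construct $\Kl{\leq}$ extending the prescribed $K_x^{*}$, together with the reparametrization $\FN$; then, for every admissible $\Delta R$, we apply Theorem~\ref{maintheorem} with $R=\FN+\Delta R$.

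\textbf{Building $\Kl{\leq}$ and $\FN$.} Decompose $F=\FT+\FR{\k}$ as in~\eqref{def:FdecompPG}, where $\FT$ is the Taylor polynomial of $F$ of degree $\k-1$ and $\FR{\k}(x,y)=\OO(\Vert(x,y)\Vert^{\k})$. Apply \TFBFMae to $\FT$, using the freedom it provides to prescribe the homogeneous components $K_x^{j}$, $j=2,\dots,\k-N$, of the $x$-component. This produces $\Kl{\leq}$, a sum of $\CC^{\rv}$ homogeneous terms with $K_x^{\leq}(x)=K_x^{*}(x)+\OO(\Vert x\Vert^{\k-N+1})$, together with $\FN$, a finite sum of $\CC^{\rv}$ homogeneous functions of order less than $\k-1$ with $\FN(x)-x-p(x,0)=\OO(\Vert x\Vert^{N+1})$, satisfying
\begin{equation*}
\FT\circ\Kl{\leq}-\Kl{\leq}\circ\FN=\OO(\Vert x\Vert^{\k}).
\end{equation*}
Crucially, $\FN$ depends only on $\FT$ and on the prescribed $K_x^{*}$, so it is produced once and for all, independently of $\Delta R$, as required by the statement.

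\textbf{A posteriori step.} Fix any admissible $\Delta R$ and set $R=\FN+\Delta R$. Split
\begin{equation*}
F\circ\Kl{\leq}-\Kl{\leq}\circ R=\bigl(\FT\circ\Kl{\leq}-\Kl{\leq}\circ\FN\bigr)+\FR{\k}\circ\Kl{\leq}-\bigl(\Kl{\leq}\circ R-\Kl{\leq}\circ\FN\bigr).
\end{equation*}
The first term is $\OO(\Vert x\Vert^{\k})$ by construction; the second is $\OO(\Vert x\Vert^{\k})$ because $\Kl{\leq}(x)=(x,0)+\OO(\Vert x\Vert^{2})$; the third is $\OO(\Vert x\Vert^{\k})$ by the mean value inequality together with $D\Kl{\leq}=(\Id,0)^{\top}+\OO(\Vert x\Vert)$ and $\Delta R(x)=\OO(\Vert x\Vert^{\k})$. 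Since $\ri<\k\leq r$, Theorem~\ref{maintheorem} applies with $\ell=\k$ and yields a unique $\CC^{\rv}$ function $\Kl{>}$ with $\Kl{>}(x)=\OO(\Vert x\Vert^{\k-N+1})$ such that $K=\Kl{\leq}+\Kl{>}$ solves~\eqref{invcondtheorem}; the required bound $K_x(x)-K_x^{*}(x)=\OO(\Vert x\Vert^{\k-N+1})$ then follows by adding the two contributions. The heart of the argument lies in \TFBFMae, which supplies the exact flexibility needed to prescribe $K_x^{*}$; once that input is granted, the rest is a direct transfer from the approximate to the true solution via Theorem~\ref{maintheorem}.
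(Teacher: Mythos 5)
Your proposal is correct and follows essentially the same route as the paper: decompose $F$ as in~\eqref{def:FdecompPG}, invoke \TFBFMae{} to build $\Kl{\leq}$ (with $\Kl{\leq}_x-K_x^{*}=\OO(\Vert x\Vert^{\k-N+1})$) and the $\Delta R$-independent $\FN$, absorb $\Delta R$ into the error via $\Kl{\leq}(\FN+\Delta R)=\Kl{\leq}\circ\FN+\OO(\Vert x\Vert^{\k})$, and conclude with Theorem~\ref{maintheorem}. Your explicit three-term splitting of the error is just a slightly more detailed write-up of the same argument.
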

\begin{proof}
We proceed as in the proof of Corollary~\ref{maintheorem2}
decomposing $F$ as in~\eqref{def:FdecompPG} and applying \TFBFMae
instead of \TFBFMe which assures the existence of $\Kl{\leq}$ and
$\FN$ satisfying the invariance equation~\eqref{eqFTKlamain} up to
order $\k$. Moreover, $\Kl{\leq}_x(x) - K_x^{*}(x) =\OO(\Vert x
\Vert^{\k-N+1})$. Since $\Delta R(x)=\OO(\Vert x
\Vert^{\k})$, we have that $\Kl{\leq} (\FN (x) + \Delta R(x)) =
\Kl{\leq}\circ \FN (x)  + \OO(\Vert x \Vert^{\k})$ and, consequently,
writing $R=\FN + \Delta R$,
$$
F\circ \Kl{\leq}(x) - \Kl{\leq} \circ R (x)= \OO(\Vert x
\Vert^{\k}).
$$
Applying Theorem~\ref{maintheorem}, we get the result.
\end{proof}

\subsection{Dependence on parameters}
\label{sec:parameters} In this section we deal with the dependence
on parameters of the parametrization~$K$ and the reparametrization $R$
provided by Theorem~\ref{maintheorem} and
Corollary~\ref{maintheorem2}.
\subsubsection{Set up}
\label{sec:setup_parameters} Let $\Lambda\subset \RR^{n'}$ be an
open set of parameters and $U\subset \RR^{n}$ be an open set. We
consider $\CC^r$ maps $F: U\times \Lambda\to \RR^{n+m}$ having the
form \eqref{defF} for any $\lambda \in \Lambda$, namely:
\begin{equation}\label{defFparam}
 F(x,y,\lambda) = \left ( \begin{array}{c} x+  p(x,y,\lambda) +  f(x,y,\lambda) \\
y+ q(x,y,\lambda)+ g(x,y,\lambda)\end{array}\right ),\qquad
(x,y,\lambda)\in \RR^n \times\RR^m\times \RR^{n'},
\end{equation}
where $p,q$ are homogeneous polynomials for any fixed $\lambda$ of
degree $N,M\geq 2$ respectively and $f(x,y,\lambda) =\OO(\Vert (x,y)\Vert^{N+1})$,
$g(x,y,\lambda) =\OO(\Vert (x,y)\Vert^{M+1})$ uniformly
in $\lambda$.

In this context, the constants introduced in~\eqref{defconstants},
\eqref{defri} and Hypothesis H3, depend on~$\lambda$. We denote this
dependence by a superindex, for instance
$\Ap^{\lambda},\ri^{\lambda}$, etc. We redefine the constants
(independent of $\lambda$) $\Ap,\Bp,\ap,\bp,\Bq,\CIn, \cp,\ddp,\ri$
by
\begin{equation}\label{defconstantslambda}
\begin{aligned}
&\Ap =\inf_{\lambda \in \Lambda} \Ap^{\lambda},\qquad \ap=\inf_{\lambda \in \Lambda} \ap^{\lambda},
\qquad \Bq=\inf_{\lambda \in \Lambda} \Bq^{\lambda}, \\
&\Bp =\sup_{\lambda \in \Lambda} \Bp^{\lambda},\qquad \bp=\inf_{\lambda \in \Lambda} \ap^{\lambda},
\qquad \CIn = \inf_{\lambda \in \Lambda} \CIn^{\lambda}, \\
&\cp  =
\begin{cases}
\;\;\ap,  & \text{if}\;\; \Bq \leq 0,\\
\;\;\bp, &  \text{otherwise,}
\end{cases}
 \qquad \ddp  =
\begin{cases}
\;\;\ap,  & \text{if}\;\; \Ap\leq 0,\\
\;\;\bp, &  \text{otherwise,}
\end{cases} \\
&\ri = N-1 + \frac{\Bp}{\ap}+\max \left
\{\eta-\frac{\Ap}{\ddp},0\right \}.
\end{aligned}
\end{equation}
\begin{lemma}\label{lemmaconstantslambda}
If the conditions in H1, H2, H1', H2' and H3 hold true for the
constants $\Ap, \Bp,\ap,\bp,\Bq, \cp,\ddp, \CIn$, they are also true
for $\Ap^{\lambda},
\Bp^{\lambda},\ap^{\lambda},\bp^{\lambda},\Bq^{\lambda},
\cp^{\lambda},\ddp^{\lambda},\CIn^{\lambda}$ for any $\lambda \in
\Lambda$.

In addition $\ri^{\lambda}\leq \ri$.
\end{lemma}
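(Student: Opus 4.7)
The plan is to verify each condition in H1, H2, H1', H2' and H3 pointwise in~$\lambda$ by exploiting the monotonicity built into the definitions in~\eqref{defconstantslambda}: for every $\lambda\in\Lambda$ one has
\begin{equation*}
\ap^{\lambda}\ge\ap,\quad \Ap^{\lambda}\ge\Ap,\quad \Bq^{\lambda}\ge\Bq,\quad \CIn^{\lambda}\ge\CIn,\quad \Bp^{\lambda}\le\Bp,\quad \bp^{\lambda}\le\bp.
\end{equation*}
Each hypothesis is an open inequality preserved by these monotonicities, so the strategy reduces to a careful case analysis.

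First I would dispatch the easy parts. H1 is immediate since $\ap^{\lambda}\ge\ap>0$. The identity $q(x,0,\lambda)=0$ appearing in H2 and H2' is independent of the constants. H3 follows from $\CIn^{\lambda}\ge\CIn$. For H2 with $M<N$, $\Bq^{\lambda}\ge\Bq>0$; for $M=N$, combine $\Bq^{\lambda}\ge\Bq>-N\ap\ge-N\ap^{\lambda}$.

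The substantive step is to establish the two ratio inequalities
\begin{equation*}
\frac{\Ap^{\lambda}}{\ddp^{\lambda}}\ge\frac{\Ap}{\ddp},\qquad \frac{\Bq^{\lambda}}{\cp^{\lambda}}\ge\frac{\Bq}{\cp},
\end{equation*}
from which H1' (the extra condition $\Ap^{\lambda}/\ddp^{\lambda}>-1$ when $M>N$) and the $M=N$ bound in H2' will follow directly from their uniform counterparts. For the first ratio I split on the sign of $\Ap$. If $\Ap>0$, then $\Ap^{\lambda}\ge\Ap>0$, so $\ddp=\bp$ and $\ddp^{\lambda}=\bp^{\lambda}\le\bp$, giving $\Ap^{\lambda}/\bp^{\lambda}\ge\Ap/\bp$. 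If $\Ap\le 0$, then $\ddp=\ap$ and there are two sub-cases: if $\Ap^{\lambda}\le 0$ then $\ddp^{\lambda}=\ap^{\lambda}$, and the bound $\Ap^{\lambda}/\ap^{\lambda}\ge\Ap/\ap$ follows from $|\Ap^{\lambda}|\le|\Ap|$ combined with $1/\ap^{\lambda}\le 1/\ap$; if $\Ap^{\lambda}>0$ then $\Ap^{\lambda}/\bp^{\lambda}>0\ge\Ap/\ap$. An analogous case split on the sign of $\Bq$ yields the second ratio inequality. With these in hand, the inequality in H1' for $\lambda$ is immediate when $\Ap>0$; when $\Ap\le 0$ it reduces, via the above, to $\Ap+\ap>0$, which is guaranteed by the uniform H1' after a similar sign split.

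Finally, $\ri^{\lambda}\le\ri$ drops out: $\Bp^{\lambda}/\ap^{\lambda}\le\Bp/\ap$ by direct monotonicity, while the ratio inequality $\Ap^{\lambda}/\ddp^{\lambda}\ge\Ap/\ddp$ gives $\max\{\eta-\Ap^{\lambda}/\ddp^{\lambda},0\}\le\max\{\eta-\Ap/\ddp,0\}$. The main obstacle in this proof is purely bookkeeping: the branch of $\ddp$ and $\cp$ that is active for the uniform constants need not coincide with the branch active for the $\lambda$-wise constants, so the ratio inequalities above require an exhaustive check of the four combinations of signs, and this is the only place where real work is hidden.
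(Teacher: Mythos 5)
Your argument is correct and is precisely the check the paper leaves to the reader (its ``proof'' is just that the claim is straightforward from the definitions): the monotonicity of the uniform constants versus the $\lambda$-wise ones, $\ap^\lambda\ge\ap$, $\Ap^\lambda\ge\Ap$, $\Bq^\lambda\ge\Bq$, $\CIn^\lambda\ge\CIn$, $\Bp^\lambda\le\Bp$, $\bp^\lambda\le\bp$, together with the sign case analysis needed because the active branch of $\ddp$ and $\cp$ may differ between the uniform and the $\lambda$-wise constants. The only caveat is that your inequality $\bp^{\lambda}\le \bp$ relies on reading $\bp=\sup_{\lambda\in\Lambda}\bp^{\lambda}$, which is clearly what~\eqref{defconstantslambda} intends (the displayed $\bp=\inf_{\lambda}\ap^{\lambda}$ there is evidently a typo), and without which neither your ratio inequalities nor the lemma itself would go through.
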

The proof of this lemma is straightforward from the definitions.

The differentiability class we work in  was used in
\cite{CabreFL03b} and is the one considered in~\cite{BFM2015b} for
the approximate solutions. For any $\rl,\rx\in (\ZZ^+)^2$, we define
the set
\begin{equation*}
\DS = \big \{ (i,j) \in (\ZZ^+)^2 : i+j\leq \rx+\rl,\; i\leq \rl\big
\}
\end{equation*}
and for an open set $\mathcal{U}\subset \RR^l \times \RR^{n'}$, the
function space
\begin{equation}
\label{defCDS}
\CC^{\DS} = \big \{ f: \mathcal{U}\to \RR^{k} \;:\;
\forall
(i,j)\in \DS, \;D_{\mu}^i \Dz^j f \;
\text{exists, is continuous and bounded} \big\}.
\end{equation}
Here $D_{\mu}$ and $\Dz$ means the derivative with respect to $\mu$
and $z$ respectively. We also denote by
$$
\CC^{\Sigma_{s,\omega}} = \big \{ f: \mathcal{U}\to \RR^{k} \;:\;
f(\cdot,\mu) \;\text{ is analytic and } \; f\in \CC^{s}\}.
$$

We note that $\CC^r \subset \CC^{\Sigma_{r,r}}$.

\subsubsection{Dependence on parameters results}
Note that assuming that the conditions in both
Theorem~\ref{maintheorem} and Corollary~\ref{maintheorem2} are
satisfied for any $\lambda \in \Lambda$, we obtain the existence of
$K, R$ solutions of the invariance equation
\begin{equation}\label{inv:equation:param}
F(K(x,\lambda),\lambda) = K(R(x,\lambda),\lambda).
\end{equation}
To have regularity with respect to $\lambda$ we need
to impose some uniformity conditions.

Let $V$ an open set as in Section~\ref{results1} and $\r>0$. We
rewrite H1, H2 and H3 to become uniform with respect to $\lambda\in
\Lambda$ and we add an extra condition:
\begin{enumerate}
\item[H$\lambda$] The constants $\ap,\CIn>0$. Moreover $q(x,0,\lambda)=0$ for $(x,\lambda) \in \Vr \times \Lambda$ and
either $\Bq>0$ if $M>N$ or $\Bq>-N\ap$ if $M=N$.
\item[HP]$D_z^j f(x,y,\lambda) = \OO(\Vert (x,y)\Vert^{N+1-j})$  and $D_z^j g(x,y,\lambda)= \OO(\Vert (x,y)\Vert^{M+1-j})$  uniformly in $\Lambda$ with $z=(x,y)$ and $j=0,1$.
\end{enumerate}

We introduce
\begin{equation}\label{defril}
\ril:=N-1 + \frac{\Bp}{\ap}+(\eta-1).
\end{equation}

\begin{theorem}\label{maintheoremparam}
Let $F\in \CC^{\DS}$ be a map of the form \eqref{defFparam}. Let
$\ro>0$ be such that Hypotheses H$\lambda$, HP hold true and $\rx
>\max\{\ri,\ril\}$, $s\ge 0$.

Assume that there exist $\Kl{\leq}:\Vro\times \Lambda \to U$ and
$R:\Vro\times \Lambda \to \Vro$ such that
\begin{enumerate}
\item [(a)] $\Kl{\leq},R \in \CC^{\DSf}$.
\item [(b)] For $(i,j)\in \DSf$, uniformly over $\Lambda$,
\begin{equation*}
\begin{aligned}
&\Delta K^{\leq}(x,\lambda) :=K^{\leq}(x,\lambda) -(x,0) =\OO(\Vert x \Vert^{2}), &\Dl^i \Dx^j \Delta K^{\leq}(x,\lambda)=\OO(\Vert x \Vert^{2-j}),& \\
& \Delta R(x,\lambda):=R(x,\lambda) -x-p(x,0,\lambda) =\OO(\Vert x
\Vert^{N+1}),  &\Dl^i \Dx^j \Delta R(x,\lambda)=\OO(\Vert x
\Vert^{N+1-j}).&
\end{aligned}
\end{equation*}
\item [(c)] The invariance equation~\eqref{inv:equation:param} is satisfied up to order $\ri<\k \leq r$:
$$
F(\Kl{\leq}(x,\lambda),\lambda) - \Kl{\leq}(R(x,\lambda),\lambda) =
\OO(\Vert x \Vert^{\k}), \;\;\;\text{uniformly for}\;\; \lambda\in
\Lambda.
$$
\end{enumerate}

Then the unique function $\Kl{>}:\Vr \times \Lambda \to \RR^{n+m}$ found in
Theorem~\ref{maintheorem} belongs to $\CC^{\Sigma_{\rl^>,\rv}}$
where $\rl^>$ and $\rv$ have the following values according to the
cases
\begin{enumerate}
\item[(1)] If $\Ap\geq \ddp \eta$, $\rv=\min \{r,\rfm\}$ and $s^{>}\leq \min\{s,s^{\leq}\}$ satisfies
\begin{equation}\label{s>cond1}
s^> (\eta -1) < \rx-\frac{\Bp}{\ap}-N+1.
\end{equation}
\item[(2)] If $\ddp< \Ap \leq \ddp \eta$, then $\rv\leq  \min\{r,\rfm\}$, $s^> \leq \min \{s,s^{\leq}\}$ and
\begin{equation}\label{s>cond2}
\rx-\frac{\Bp}{\ap}-N+1 -\rv \left (\eta- \frac{\Ap}{\ddp}\right ) >
s^> (\eta-1).
\end{equation}
\item[(3)] If $\Ap < \ddp$, then $\rv\leq  \min\{r,\rfm\}$, $s^> \leq \min \{s,s^{\leq}\}$ and
\begin{equation}\label{s>cond3}
\rx-\frac{\Bp}{\ap}-N+1 -\rv \left (\eta- \frac{\Ap}{\ddp}\right ) >
s^> \left (\eta-\frac{\Ap}{\ddp}\right ).
\end{equation}
\item[(4)] If $F\in  \CC^{\Sigma_{s,\infty}}$, then $\rv = \rfm$ and $\rl^> = \rl^{\leq}$.
\end{enumerate}

Finally, if either $F, \Kl{\leq}$ and $R$ are real analytic or they
belong to $\CC^{\Sigma_{s,\omega}}$ and $\Ap>\bp$, then $K^>$ is
either real analytic if item (b) holds true for $i=j=0$ or $K^> \in
\CC^{\Sigma_{s,\omega}}$ if item (b) holds true for $j=0$
respectively.

\end{theorem}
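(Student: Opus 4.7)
The plan is to recast the construction of $K^{>}$ from Theorem~\ref{maintheorem} as a fixed point problem in a Banach space of $\CC^{\DSi}$ functions on $\Vr\times\Lambda$ carrying the prescribed decay at the origin. Hypotheses H$\lambda$ and HP, together with Lemma~\ref{lemmaconstantslambda}, ensure that the constants governing the fixed point argument of Theorem~\ref{maintheorem} are bounded uniformly in $\lambda\in\Lambda$. Consequently, the existence and uniqueness of $K^{>}(\cdot,\lambda)$, and its continuity in $\lambda$, follow from applying Theorem~\ref{maintheorem} pointwise in $\lambda$ and invoking the uniqueness. The substance of the present theorem is therefore to show that joint $(\lambda,x)$-regularity of orders $(i,j)\in\DSi$ propagates from $F$, $K^{\leq}$ and $R$ to $K^{>}$.

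First I would introduce the weighted Banach space of continuous maps $K^{>}\colon\Vr\times\Lambda\to\RR^{n+m}$ satisfying $\|K^{>}(x,\lambda)\|\le C\|x\|^{\k-N+1}$, and rewrite the invariance equation in the schematic form
\begin{equation*}
K^{>}\circ R \;-\; DF(K^{\leq})\,K^{>} \;=\; F\circ K^{\leq} - K^{\leq}\circ R \;+\; \mathcal{N}(K^{>}),
\end{equation*}
where $\mathcal{N}$ gathers the terms that are quadratic or higher in $K^{>}$. Inverting the linear operator on the left along the $R$-orbit (this is where $\ap>0$ and the hypothesis on $\Bq$ are used) produces a nonlinear fixed point operator $\mathcal{S}$ whose contractivity on the weighted space follows from the same estimates as in Theorem~\ref{maintheorem}, with every inequality evaluated at the uniform constants of~\eqref{defconstantslambda}. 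This delivers the $\CC^{0}$-in-$\lambda$ existence.

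Joint regularity is then obtained by induction on $(i,j)\in\DSi$, increasing $i+j$ one unit at a time while keeping $i\le s^{>}$ and $j\le\rv$. Formal differentiation of the invariance equation yields, at each admissible multi-index, a linear equation for $\Dl^i\Dx^j K^{>}$ with the same transfer operator $K\mapsto K\circ R$ up to lower-order differential corrections, plus a forcing term expressed in previously constructed derivatives. The main technical obstacle is to verify that the contraction constant of the transfer operator, measured in the weighted space adapted to $(i,j)$, remains strictly below $1$ and that the forcing term fits in that space. Each $x$-derivative costs a factor $\|x\|^{\eta-\Ap/\ddp}$ (degenerating to $\|x\|^{0}$ when $\Ap\ge\eta\ddp$), while each $\lambda$-derivative, by virtue of assumption~(b) on $K^{\leq}$ and $R$, costs $\|x\|^{\eta-1}$ in case~(1) or $\|x\|^{\eta-\Ap/\ddp}$ in case~(3). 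Balancing these costs against the decay budget $\|x\|^{\rx-\Bp/\ap-N+1}$ supplied by item~(c) produces precisely the inequalities \eqref{s>cond1}, \eqref{s>cond2} and \eqref{s>cond3}. The shape of $\DSi$ is designed so that the induction closes: one only has to raise indices inside that set.

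For the analyticity statement I would restrict $\mathcal{S}$ to a closed subspace of functions analytic in $\lambda$ (and jointly in $(x,\lambda)$ when item~(b) is available for $j=0$). Since $F$, $K^{\leq}$ and $R$ are analytic in the corresponding variables, $\mathcal{S}$ preserves these subspaces; the fixed point then inherits analyticity via the standard argument that a uniform limit of analytic functions is analytic. Under the additional assumption $\Ap>\bp$, $x$-analyticity is preserved exactly as in the analyticity part of Theorem~\ref{maintheorem}, which completes the proof.
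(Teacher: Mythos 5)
Your plan is correct and is essentially the route the paper takes: the paper likewise reduces the problem to the fixed-point equation $\Kl{>}=\S{0}\circ\FF(\Kl{>})$ in the parameter-dependent weighted spaces $\XD{\sigma}{\nu}{k}$, with all constants made uniform in $\lambda$ by H$\lambda$, HP and Lemma~\ref{lemmaconstantslambda}, and then obtains the $\lambda$- and $x$-derivatives by an induction in which each differentiation of the solution operator along the $R$-orbits costs precisely the weights you describe, yielding the balance conditions \eqref{s>cond1}--\eqref{s>cond3}, while analyticity is propagated on complex sectors that are invariant thanks to $\Ap>\bp$. The only difference is presentational (you bootstrap by differentiating the invariance equation, the paper proves boundedness of $\S{0}$ on the high-regularity spaces and recovers the last derivative), which amounts to the same estimates.
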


To finish this section, we formulate an existence result as a
corollary of Theorem~\ref{maintheoremparam} and Theorem~2.7
in~\cite{BFM2015b} which includes the regularity with respect to
parameters of the approximate solutions. The following new condition is necessary
to ensure the existence of solutions of the invariance
equation~\eqref{inv:equation:param} for any value of
$\lambda\in\Lambda$:
\begin{enumerate}
\item [H$\lambda$']  $\ap,\CIn>0$, $q(x,0,\lambda)\equiv 0$ and the conditions in
hypotheses H1', H2' are satisfied for the constants $\Ap, \ddp, \Bq,
\cp$ redefined in~\eqref{defconstantslambda}.
\end{enumerate}
As we claim in Lemma~\ref{lemmaconstantslambda}, we have that H1',
H2' and H3 are satisfied if H$\lambda$' holds true. Therefore, by
the existence Corollary~\ref{maintheorem2} there exist $K$ and $R$
satisfying the invariance equation~\eqref{inv:equation:param}.
Moreover, by construction, $K=\Kl{\leq} + K^{>}$ with $\Kl{\leq}$
provided by Theorem~2.7 in~\cite{BFM2015b}.

\begin{corollary}\label{corollary:param}
Let $F\in \CC^{\DS}$ be a map of the form~\eqref{defFparam}. Assume
that there exists $\ro>0$ such that H$\lambda$' holds true.

\begin{itemize}
\item Parametric version of Corollary~\ref{maintheorem2} :
The solutions $K:\Vr \times \Lambda \to \RR^{n+m}$, $R:\Vr \times
\Lambda \to \Vr$ of the invariance equation provided by Corollary~\ref{maintheorem2} belong to
$\CC^{\Sigma_{s^>,\rv}}$ with $s^>$ and $\rv$ satisfying
\begin{enumerate}
\item[(1)] If $\Ap \geq \ddp \eta$, $\rv=r$ and $s^>\leq s$ satisfying~\eqref{s>cond1}.
\item[(2)] If $\ddp \leq \Ap <\eta \ddp$ or $M<N$, $\rv\leq r$, $s^>\leq s$ satisfying~\eqref{s>cond2}.
\item[(3)] If $\Ap<\ddp$ and $M\geq N$, $\rv\leq r$, $s^>\leq s$, $\rv+s^> \leq \gdf$
satisfying~\eqref{s>cond3}.
\item[(4)] If $F\in \CC^{\Sigma_{s,\infty}}$ and $\Ap\geq \ddp$, then $\rv =\infty$ and $s^>=s$.
\end{enumerate}

Moreover, if either $F$ is real analytic or it belongs to
$\CC^{\Sigma_{s,\omega}}$ and $\Ap>\bp$, then $K$ is either real
analytic or $K \in \CC^{\Sigma_{s,\omega}}$ respectively.

\item Parametric version of Corollary~\ref{corollary2}:
Let $K_{x}^j :\Vr\times \Lambda\to  \RR^{n}$ be
$\CC^{\Sigma_{s^>,\rv}}$ homogeneous functions of degree~$j$ with
respect to $x$. We introduce $K_x^{*}(x,\lambda)= x
+\sum_{j=2}^{\k-N}K_x^j(x,\lambda)$ as in
Corollary~\ref{corollary2}.

Then, the function $\FN:\Vr \times \Lambda \to \RR^n$ provided by
Corollary~\ref{corollary2} belongs to $\CC^{\Sigma_{s^>,\rv}}$.
Moreover, if $\Delta R :\Vr\times \Lambda \to \RR^{n}$ with $\Delta
R (x,\lambda) = \OO(\Vert x \Vert^{\k})$ uniformly in $\lambda\in
\Lambda$, belongs to $\CC^{\Sigma_{s^>,\rv}}$, then the function $K$
satisfying the invariance equation~\eqref{inv:equation:param} for
$R=\FN+\Delta R$ given in Corollary~\ref{corollary2} also belongs to
$\CC^{\Sigma_{s^>,\rv}}$.
\end{itemize}
\end{corollary}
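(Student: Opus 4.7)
The plan is to mimic the proofs of Corollaries~\ref{maintheorem2} and~\ref{corollary2}, substituting the parametric versions of the ingredients: Theorem~\ref{maintheoremparam} in place of Theorem~\ref{maintheorem}, and Theorem~2.7 in~\cite{BFM2015b}, which tracks the dependence on $\lambda$ of the approximate solutions, in place of Theorem~2.2 in~\cite{BFM2015b}.

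First, by Lemma~\ref{lemmaconstantslambda}, hypothesis H$\lambda$' ensures that H1', H2' and H3 hold for every fixed $\lambda\in\Lambda$ and, in particular, that H$\lambda$ and HP are satisfied uniformly in $\lambda$. The argument used in the proof of Corollary~\ref{maintheorem2} to deduce $\Bq>-N\ap$ from H2' when $M=N$ transfers verbatim to the uniform constants. Next, choose $\k\in\NN$ with $\ri<\k\le \rx$ and split
\begin{equation*}
F(x,y,\lambda)=\FT(x,y,\lambda)+\FR{\k}(x,y,\lambda),
\end{equation*}
where $\FT$ is the Taylor polynomial of $F(\cdot,\lambda)$ up to order $\k-1$ in $(x,y)$. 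The $\lambda$-regularity of the coefficients of $\FT$ is inherited from that of $F$, and the remainder satisfies $\FR{\k}(x,y,\lambda)=\OO(\Vert(x,y)\Vert^{\k})$ together with the corresponding bounds on its mixed derivatives $\Dl^{i}\Dz^{j}\FR{\k}$.

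For the first bullet, apply Theorem~2.7 in~\cite{BFM2015b} to $\FT$ to obtain $\Kl{\leq}$ and $R$, sums of $\lambda$-dependent homogeneous functions in~$x$, satisfying
\begin{equation*}
\FT(\Kl{\leq}(x,\lambda),\lambda)-\Kl{\leq}(R(x,\lambda),\lambda)=\OO(\Vert x\Vert^{\k})
\end{equation*}
uniformly in $\lambda$, together with the structural estimates on $\Delta\Kl{\leq}$, $\Delta R$ and their $\Dl^{i}\Dx^{j}$-derivatives demanded by item~(b) of Theorem~\ref{maintheoremparam}. The parametric version of that theorem supplies the $x$-regularity: $\rfm=\infty$ if $\Ap\ge\bp$ and $\rfm=\gdf$ otherwise. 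Homogeneity in $x$ of every block is automatic, so the derivative estimates of item~(b) come for free. Writing $F=\FT+\FR{\k}$ converts the residual in~\eqref{inv:equation:param} to $\OO(\Vert x\Vert^{\k})$, so the hypotheses of Theorem~\ref{maintheoremparam} are in force and it delivers $\Kl{>}\in\CC^{\Sigma_{s^>,\rv}}$; the sum $K=\Kl{\leq}+\Kl{>}$ is the parametric solution of~\eqref{inv:equation:param}. The case split is obtained by inserting the values of $\rfm$ from Theorem~2.7 in~\cite{BFM2015b} into that of Theorem~\ref{maintheoremparam}: in particular case~(3) is responsible for the combined bound $\rv+s^>\le\gdf$, because both $\Kl{\leq}$ and the fixed point inherit the loss of $x$-regularity coming from the formal approximation, while the analytic statement is inherited from the corresponding clause of Theorem~\ref{maintheoremparam}.

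For the parametric version of Corollary~\ref{corollary2}, the same decomposition together with Theorem~2.7 in~\cite{BFM2015b} applied with the prescribed initial blocks $K_x^{j}(\cdot,\lambda)$ produces $\Kl{\leq}$ and $\FN$ in the required regularity class and with $\Kl{\leq}_{x}-K_x^{*}=\OO(\Vert x\Vert^{\k-N+1})$ uniformly in $\lambda$. Since $\Delta R(\cdot,\lambda)=\OO(\Vert x\Vert^{\k})$ uniformly in $\lambda$ and $\Delta R\in\CC^{\Sigma_{s^>,\rv}}$, a Taylor expansion along the segment between $\FN$ and $\FN+\Delta R$ gives
\begin{equation*}
\Kl{\leq}(\FN(x,\lambda)+\Delta R(x,\lambda),\lambda)=\Kl{\leq}(\FN(x,\lambda),\lambda)+\OO(\Vert x\Vert^{\k})
\end{equation*}
uniformly in $\lambda$, so the approximate invariance equation is still satisfied up to order $\k$ with $R=\FN+\Delta R$. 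A final application of Theorem~\ref{maintheoremparam} yields $K\in\CC^{\Sigma_{s^>,\rv}}$. The main obstacle is the bookkeeping of the joint regularity indices: the compatibility between the uniform estimates produced by Theorem~2.7 in~\cite{BFM2015b} and the sharp inequalities~\eqref{s>cond1}--\eqref{s>cond3} of Theorem~\ref{maintheoremparam} must be tracked case by case, and the composite loss $\rv+s^>\le\gdf$ in case~(3) must be attributed to the regularity of the formal approximation, not to the fixed-point contraction itself.
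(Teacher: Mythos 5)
There is a genuine gap at the very first step: you assert that H$\lambda$' ``in particular'' gives H$\lambda$ and HP uniformly in $\lambda$, citing Lemma~\ref{lemmaconstantslambda}. That lemma only transfers the inequalities on the constants $\Ap,\Bq,\ap,\ldots$ from the uniform constants to the $\lambda$-dependent ones; it says nothing about HP. Hypothesis HP concerns the remainders: it demands $D_z f(x,y,\lambda)=\OO(\Vert (x,y)\Vert^{N})$ and $D_z g(x,y,\lambda)=\OO(\Vert (x,y)\Vert^{M})$ \emph{uniformly} in $\lambda\in\Lambda$, and this does not follow from H$\lambda$' nor from the form~\eqref{defFparam} (which only gives uniformity for $j=0$); for each fixed $\lambda$ the derivative bounds hold, but possibly with $\lambda$-dependent constants. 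Since Theorem~\ref{maintheoremparam} explicitly requires HP (and uniform-in-$\lambda$ bounds in its items (a)--(c) for $\Kl{\leq},R$), your direct application of it on all of $\Vr\times\Lambda$ is not justified as written. The same issue affects your claim that the estimates of item~(b) ``come for free'' from homogeneity in $x$: homogeneity handles the $\Dx^j$ scaling, but the uniformity in $\lambda$ of those bounds and of the $\Dl^i$ derivatives of $\Kl{\leq},R$ is exactly what may fail globally.

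The missing idea, which is how the paper proceeds, is a localization-and-gluing argument. Existence and uniqueness of $K(\cdot,\lambda_0)=\OO(\Vert x\Vert^{\k-N+1})$ for each fixed $\lambda_0$ is already given by Corollary~\ref{maintheorem2} (so there is no need to redo the decomposition $F=\FT+\FR{\k}$ for existence). For the $\lambda$-regularity, fix $\lambda_0\in\Lambda$ and use continuity of $F$ and its derivatives to find a ball $B_{\rho_0}(\lambda_0)$ such that HP holds uniformly on $\Lambda_{\lambda_0}=\Lambda\cap B_{\rho_0}(\lambda_0)$, and, restricting $\rho_0$ if necessary, such that the approximate solutions $\Kl{\leq},R$ satisfy items (a), (b), (c) of Theorem~\ref{maintheoremparam} with bounds uniform in $\lambda\in\Lambda_{\lambda_0}$. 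Applying Theorem~\ref{maintheoremparam} on $\Vr\times\Lambda_{\lambda_0}$ gives $K\in\CC^{\Sigma_{s^>,\rv}}$ there, and since for each $\lambda$ the solution of order $\OO(\Vert x\Vert^{\k-N+1})$ is unique, these local functions coincide with the global $K$; as membership in $\CC^{\Sigma_{s^>,\rv}}$ is local in $\lambda$, one concludes $K\in\CC^{\Sigma_{s^>,\rv}}$ on all of $\Vr\times\Lambda$. Your case bookkeeping (inserting $\rfm$ from Theorem~2.7 of~\cite{BFM2015b} into the cases of Theorem~\ref{maintheoremparam}, including attributing $\rv+s^>\le\gdf$ in case (3) to the regularity of the formal approximation) is consistent with the intended mechanism, but without the localization step the argument does not go through under the stated hypotheses.
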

\begin{proof} For any fixed $\lambda_0\in \Lambda$, the existence and uniqueness of $K(x, \lambda_0)=\OO(\Vert x \Vert^{\ell-N+1})$
satisfying the invariance equation~\eqref{inv:equation:param} is
guaranteed by Corollary~\ref{maintheorem2}. To obtain the regularity
with respect to the parameter we have to apply
Theorem~\ref{maintheoremparam}. To do so we need to discuss
Hypothesis HP. Since for any $\lambda\in \Lambda$, $F$ has the form
in~\eqref{defF}, we have that $D f(x,y,\lambda) = \OO(\Vert
(x,y)\Vert^{N})$ and $Dg(x,y,\lambda)=\OO(\Vert (x,y)\Vert^M)$ but
the bounds are not necessarily uniform in $\lambda$. Nevertheless,
by continuity, for any $\lambda_0\in \Lambda$ there exists an open
ball centered at $\lambda_0$, $B_{\rho_0}(\lambda_0) \subset
\RR^{n'}$, in such a way that HP is satisfied when we restrict the
domain of $\lambda$ to $\Lambda_{\lambda_0} = \Lambda \cap
B_{\rho_0}(\lambda_0)$. In addition, restricting $\rho_0$ if
necessary, we can get the approximate solutions $\Kl{\leq}, R$
satisfying items (a), (b) and (c) in Theorem~\ref{maintheoremparam},
that is, with uniform bounds in $\lambda \in \Lambda_{\lambda_0}$.

In conclusion, $K\in \CC^{\Sigma_{s^{>},\rv}}$ with $(x,\lambda)\in
\Vr\times \Lambda_{\lambda_0}$. Since $K(\cdot, \lambda)$ is the
unique solution of~\eqref{inv:equation:param} of order $\OO(\Vert x
\Vert^{\ell-N+1})$, $K\in \CC^{\Sigma_{s^{>},\rv}}$ in the full
domain $(x,\lambda)\in \Vr\times \Lambda$.
\end{proof}
\subsection{Existence results for invariant manifolds. The flow case} \label{sec:flow}

We deduce the analogous result to Corollary~\ref{maintheorem2} in
the case of time periodic flows, that is, in the case of a flow with
a parabolic periodic orbit. To study invariant objects associated to
periodic orbits of vector fields (in our case invariant manifolds),
one possibility is to consider a Poincar\'e map in a section
transversal to the orbit and then apply the results for fixed points
of maps. In this way, one gets the invariant manifolds $W^{{\rm s},
{\rm u}}$ of the Poincar\'e map and, from them, the invariant
manifolds of the periodic orbit by considering all the solutions
starting in $W^{{\rm s}, {\rm u}}$. Nevertheless this approach has a
drawback: in the applications, it is not easy to compute the
Poincar\'e map. Hence, if one wants to compute effectively the invariant
manifolds, it is better to have a statement already adapted to the
vector field itself.

To shorten the exposition we deal directly with the parametric case.
Let $U\subset \RR^{n+m}$ be an open neighborhood of the origin,
$\Lambda\subset \RR^{n'}$ a set of parameters and $ X:U\times\RR
\times  \Lambda\to \RR^{n+m}$ a $T$-periodic vector field:
\begin{equation}
\label{XTperiodic} \dot{z}=  X(z,t,\lambda) ,\qquad X(z,t+T,\lambda)
= X(z,t,\lambda)
\end{equation}
with $z=(x,y)\in U$
having the form
\begin{equation}
\label{defX}
 X(z,t,\lambda) =  X(x,y,t,\lambda) = \left ( \begin{array}{c}  p(x,y,\lambda) +  f(x,y,t,\lambda) \\
 q(x,y,\lambda)+ g(x,y,t,\lambda)\end{array}\right ),
\end{equation}
where $p$, $q$, $f$ and $g$ are as in Section~\ref{sec:setup_parameters}.
We have this form after having translated the parabolic orbit to the origin.

Let $\varphi(t;t_0, x,y,\lambda)$ be the flow of~\eqref{XTperiodic}.
Given a subset $V\subset \RR^n$, we define the stable set of the
origin over~$V$:
\begin{equation*}
W^{{\rm s}}_V = \{(x,y) \in U : \varphi_x(t;t_0, x,y,\lambda) \in
V,\; t\geq 0 ,\; \varphi(t;t_0, x,y,\lambda)\to 0 \;\text{as}\; t\to
\infty\}
\end{equation*}
and its local version, when we restrict $W^{{\rm s}}_V$ to the open
ball $B_{\r}$:
\begin{equation*}
W^{{\rm s}}_{V,\r} = \{(x,y) \in U : \varphi_x(t;t_0, x,y,\lambda)
\in \Vr,\; t\geq 0 ,\; \varphi(t;t_0, x,\lambda)\to 0 \;\text{as}\;
t\to \infty\}.
\end{equation*}

In the case of flows, a parametrization~$K(x,t,\lambda)$  is
invariant by the flow if there exists a vector
field~$Y(x,t,\lambda)$ such that
\begin{equation}\label{homequationflow}
X(K(x,t,\lambda),t,\lambda) = \Dx K(x,t,\lambda) Y(x,t,\lambda) +
\partial_t K(x,t,\lambda)
\end{equation}
or, equivalently
\begin{equation}
\label{invcondtheoremflow} \varphi(u;t,K(x,t,\lambda),\lambda) =
K(\psi(u;t,x,\lambda),u,\lambda),\qquad \forall u\geq t, \;\;
\forall (x,\lambda)\in \Vr \times \Lambda,
\end{equation}
where $\varphi$ and $\psi$ are the flows of the vector fields~$X$
and $Y$, respectively.

In this section, we will write that a function $f$ belongs to
$\CC^{\DS}$ if it satisfies definition~\eqref{defCDS} with $z=(x,y)$
and $\mu=(\lambda,t)$.

\begin{theorem}\label{maintheoremflow}
Let $ X\in \CC^{\DS}$ be a vector field of the form \eqref{defX}.
Assume that  Hypotheses H$\lambda$ and
HP hold true for some $\ro>0$ and $\rx >\max\{\ri,\ril\}$.
Assume also that there exist $\Kl{\leq}:\Vro\times \RR/(T\ZZ) \times \Lambda \to U$
and $Y:\Vro\times \Lambda \to \Vro$ such that
\begin{enumerate}
\item [(a)] $\Kl{\leq}, Y \in \CC^{\DSf}$, for some $s^{\le}, r^{\le}
\ge 1$.
\item [(b)] For $(i,j)\in \DSf$, uniformly over $\Lambda$,
\begin{equation*}
\begin{aligned}
&\Delta K^{\leq}(x,t,\lambda) :=K^{\leq}(x,\lambda) -(x,0) =\OO(\Vert x \Vert^{2}), &\Dl^i \Dx^j \Delta K^{\leq}(x,t,\lambda)=\OO(\Vert x \Vert^{2-j}),& \\
& \Delta Y(x,\lambda):=Y(x,\lambda) -x-p(x,0,\lambda) =\OO(\Vert x
\Vert^{N+1}),  &\Dl^i \Dx^j \Delta Y(x,\lambda)=\OO(\Vert x
\Vert^{N+1-j}).&
\end{aligned}
\end{equation*}
\item [(c)] The invariance equation~\eqref{homequationflow} is satisfied up to order $\ell$, $\ri<\k \leq r$:
\begin{equation}
\label{HIKlflow} X(\Kl{\leq}(x,t,\lambda),t,\lambda) - \Dx \Kl{\leq}
(x,t,\lambda) Y(x,\lambda) -\partial_t \Kl{\leq} (x,t,\lambda)=
\OO(\Vert x \Vert^{\k}),
\end{equation}
uniformly in $\lambda\in \Lambda$.
\end{enumerate}

Then, there exists $\r>0$ small enough and a unique function
$K^{>}:\Vr \times \RR/(T\ZZ) \times \Lambda\to U$ such that
$K^{>}(x,t,\lambda) = \OO(\Vert x \Vert^{\ell-N+1})$ uniformly in
$(t,\lambda)$ and $K=\Kl{\leq} + \Kl{>}$ satisfies the invariance
equation~\eqref{homequationflow} with the prescribed vector
field~$Y$ (or, equivalently, \eqref{invcondtheoremflow} with
$\psi(u;t,x,\lambda)$ the flow of $\dot{x}=Y(x,\lambda)$).

Moreover, since $\psi(u;t,x,\lambda) \to 0$ as $u\to \infty$, and
$K_x$ is invertible for any fixed $(t,\lambda)$, we have
\begin{equation}
\label{KWsflow} \{K(x,t,\lambda)\}_{ x\in \Vr\times \RR \times
\Lambda} \subset W^{{\rm s}}_{V,\r}.
\end{equation}

Concerning the regularity of $K$, we have the same results as the
ones stated in Theorem~\ref{maintheoremparam}.
\end{theorem}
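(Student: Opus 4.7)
The natural strategy is to reduce the flow setting to the map setting by means of the time-$T$ map of $X$, thereby invoking Theorem~\ref{maintheoremparam}. Define $\Phi(z, t_0, \lambda) := \varphi(t_0 + T; t_0, z, \lambda)$; by $T$-periodicity of $X$, $\Phi$ is $T$-periodic in $t_0$, so treating $(t_0, \lambda) \in \RR/(T\ZZ) \times \Lambda$ as augmented parameters, $\Phi$ is a smooth family of maps. A standard Taylor expansion of the flow shows that $\Phi$ fits the form~\eqref{defFparam} with leading homogeneous polynomials $Tp$ and $Tq$; therefore, after possibly shrinking $\ro$, hypotheses H$\lambda$ and HP for~$X$ imply the analogous hypotheses for~$\Phi$, the constants of~$\Phi$ being comparable (at leading order in $\r$) to~$T$ times those of~$X$ and in particular sharing their signs.

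Set $\mathcal{R}(x, \lambda) := \psi(T; 0, x, \lambda)$, the time-$T$ flow of the autonomous vector field~$Y$. The key reduction step is to show that $K^{\leq}(\cdot, t_0, \lambda)$ and $\mathcal{R}$ constitute approximate solutions of the map invariance equation at the same order~$\k$ as~\eqref{HIKlflow}, uniformly in $(t_0, \lambda)$:
\begin{equation*}
\Phi(K^{\leq}(x, t_0, \lambda), t_0, \lambda) - K^{\leq}(\mathcal{R}(x, \lambda), t_0, \lambda) = \OO(\Vert x \Vert^{\k}).
\end{equation*}
I would prove this by considering
\begin{equation*}
\Delta(u) := \varphi(u; t_0, K^{\leq}(x, t_0, \lambda), \lambda) - K^{\leq}(\psi(u; t_0, x, \lambda), u, \lambda),
\end{equation*}
differentiating in~$u$, substituting~\eqref{HIKlflow} together with the mean value estimate $X(\varphi) - X(K^{\leq}(\psi, u, \lambda)) = DX(\cdot)\Delta(u) + \ldots$, and applying Gronwall's lemma on the bounded interval $[t_0, t_0+T]$. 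Since $\Vert \psi(u; t_0, x, \lambda) \Vert$ stays comparable to $\Vert x \Vert$ over this interval, the accumulated error remains $\OO(\Vert x \Vert^\k)$, so $\Delta(t_0 + T) = \OO(\Vert x \Vert^\k)$ as required.

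Theorem~\ref{maintheoremparam}, applied to~$\Phi$ with parameters $(t_0, \lambda)$, then produces a unique $K^{>}(x, t_0, \lambda) = \OO(\Vert x \Vert^{\k - N + 1})$ of the stated regularity, such that $K := K^{\leq} + K^{>}$ satisfies the time-$T$ map equation $\Phi(K(\cdot, t_0, \lambda), t_0, \lambda) = K(\mathcal{R}(\cdot, \lambda), t_0, \lambda)$. To upgrade to the full flow invariance, I would define $\widetilde{K}(x, t, \lambda) := \varphi(t; 0, K(\psi(0; t, x, \lambda), 0, \lambda), \lambda)$; by construction $\widetilde{K}$ satisfies~\eqref{invcondtheoremflow}, is $T$-periodic in~$t$, and satisfies the time-$T$ map equation with the same error order as~$K$ at $t = 0$. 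The uniqueness in Theorem~\ref{maintheoremparam} then forces $\widetilde{K}(\cdot, 0, \cdot) = K(\cdot, 0, \cdot)$, hence $\widetilde{K} \equiv K$ on the entire domain, and~$K$ satisfies the flow invariance~\eqref{homequationflow}. The inclusion~\eqref{KWsflow} follows from $\psi(u; t, x, \lambda) \to 0$ as $u \to \infty$ and the invertibility of $K_x$ inherited from Theorem~\ref{maintheoremparam}.

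The main technical obstacle is the Gronwall argument in the second paragraph: one must verify that the $\OO(\Vert x \Vert^\k)$ error in~\eqref{HIKlflow} does not lose any power of $\Vert x \Vert$ when integrated over $[t_0, t_0+T]$, and that all bounds are uniform in $(t_0, \lambda)$ so Theorem~\ref{maintheoremparam} applies with the enlarged parameter space $\RR/(T\ZZ) \times \Lambda$. The regularity conclusions of Theorem~\ref{maintheoremflow} then follow automatically from the corresponding parts of Theorem~\ref{maintheoremparam}, since integrating a $\CC^{\Sigma_{s,r}}$ vector field over a bounded time interval preserves the differentiability class in the augmented parameter~$(t_0, \lambda)$.
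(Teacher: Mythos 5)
Your reduction follows the same route as the paper: pass to the time-$T$ (Poincar\'e) map $\Phi(\cdot,t_0,\lambda)=\varphi(t_0+T;t_0,\cdot,\lambda)$ and $\mathcal{R}=\psi(T;0,\cdot,\lambda)$, transfer the approximate invariance \eqref{HIKlflow} to the map level by a Gronwall estimate on $\Delta(u)$ over $[t_0,t_0+T]$, check that $\Phi$ has the form \eqref{defFparam} with leading parts $Tp$, $Tq$ and inherits H$\lambda$, HP, apply Theorem~\ref{maintheoremparam} with $(t_0,\lambda)$ as parameters, and then upgrade to flow invariance by transporting the invariant parametrization with the flow and invoking uniqueness. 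This is essentially the paper's proof (Section~\ref{sec:flowcase}), including the Gronwall step; the only part you treat more lightly than the paper is the verification that $\Delta\mathcal{R}=\mathcal{R}-x-Tp(x,0,\lambda)$ satisfies the derivative bounds of item (b) of Theorem~\ref{maintheoremparam}, for which the paper proves a small auxiliary lemma on flows.

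There is, however, a genuine gap in your last step. You define $\widetilde{K}(x,t,\lambda)=\varphi(t;0,K(\psi(0;t,x,\lambda),0,\lambda),\lambda)$ and argue that uniqueness ``forces $\widetilde{K}(\cdot,0,\cdot)=K(\cdot,0,\cdot)$, hence $\widetilde{K}\equiv K$ on the entire domain.'' But $\widetilde{K}(\cdot,0,\cdot)=K(\cdot,0,\cdot)$ holds trivially by construction and gives no information about other values of~$t$: for $t\neq 0$, $K(\cdot,t,\lambda)$ is characterized only through the uniqueness statement of Theorem~\ref{maintheoremparam} \emph{at that parameter value}, so to conclude $\widetilde{K}(\cdot,t,\lambda)=K(\cdot,t,\lambda)$ you must check, for every $t$, that (i) $\widetilde{K}(\cdot,t,\lambda)$ satisfies the time-$T$ map equation with the same $\mathcal{R}$ (this uses the $T$-periodicity of $\widetilde{K}$, which itself relies on the map equation at $t=0$), and, crucially, (ii) $\widetilde{K}(\cdot,t,\lambda)-K^{\leq}(\cdot,t,\lambda)=\OO(\Vert x\Vert^{\ell-N+1})$ uniformly in $(t,\lambda)$, so that $\widetilde{K}-K^{\leq}$ lies in the uniqueness class. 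Estimate (ii) is the missing piece; it does not follow from anything you wrote, though it can be obtained from ingredients you already have: the intermediate-time Gronwall bound $\varphi(u;t_0,K^{\leq}(x,t_0,\lambda),\lambda)-K^{\leq}(\psi(u;t_0,x,\lambda),u,\lambda)=\OO(\Vert x\Vert^{\ell})$, the Lipschitz dependence of $\varphi$ on initial conditions over a bounded time interval, $K^{>}=\OO(\Vert x\Vert^{\ell-N+1})$ and $\psi(s;t,x,\lambda)=x+\OO(\Vert x\Vert^{N})$. This is exactly the computation the paper carries out in Section~\ref{mapstoflows} with the family $\mathcal{K}_s(x,t)=\varphi(t;s,K(\psi(s;t,x),s))$, showing $\mathcal{K}_s-K^{\leq}=\OO(\Vert x\Vert^{\ell-N+1})$ before applying uniqueness at each parameter value. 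Without (ii) the identification $\widetilde{K}\equiv K$, and hence the passage from the time-$T$ map equation to the flow invariance \eqref{homequationflow}/\eqref{invcondtheoremflow}, is not justified.
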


To finish this section we formulate the existence result for the
flow case based on the approximated solutions provided in~\cite{BFM2015b}. The proof follows the same lines as the proof of
Corollary~\ref{maintheorem2}
\begin{corollary}
Let $ X\in \CC^{\DS}$ be a vector field of the form \eqref{defX}.
Assume that there exists $\ro>0$ such that Hypotheses H$\lambda$'
and HP hold true and $\rx >\max\{\ri,\ril\}$.

Then, there exist $\r>0$ small enough, a map $K:\Vr\times \RR/(T\ZZ) \times
\Lambda \to U$ and a vector field $Y:\Vr\times \RR \to \Vr$
solutions of the invariance equation~\eqref{homequationflow}
satisfying~\eqref{KWsflow}.

In addition, $K=\Kl{\leq}+ \Kl{>}$ with $\Kl{\leq}$ and $Y$ provided
by \TFBFMf.

The parametrization $K$ and the vector field $Y$ are $\CC^1$
functions at the origin in the sense of Definition~\ref{difrem}. The
regularity on $\Vr \times \RR \times \Lambda$ is the same as the one
stated in Corollary~\ref{corollary:param}.
\end{corollary}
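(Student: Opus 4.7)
The plan is to mimic the proof of Corollary~\ref{maintheorem2} (and its parametric counterpart Corollary~\ref{corollary:param}), substituting the map-case ingredients by their flow-case analogues. The first step is to check that Hypothesis H$\lambda$' implies the weaker hypothesis H$\lambda$ demanded by Theorem~\ref{maintheoremflow}. The only nontrivial implication is in the resonant case $M=N$: if $\Bq\ge 0$, $\Bq>-N\ap$ is immediate; if $\Bq<0$ the condition in H2' gives $2\ap+\Bq>0$, hence $\Bq>-2\ap\ge -N\ap$, exactly as in the map case. Hypothesis HP and the bound $\rx>\max\{\ri,\ril\}$ are kept as they are and transferred directly to Theorem~\ref{maintheoremflow}.

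The second step is to produce a sufficiently good approximate solution. Fix $\k\in\NN$ with $\ri<\k\le\rx$ and split
\[
X(z,t,\lambda)=P(z,t,\lambda)+\FR{\k}(z,t,\lambda),
\]
where $P$ is the Taylor polynomial of $X$ in $z=(x,y)$ up to degree $\k-1$ and $\FR{\k}(z,t,\lambda)=\OO(\Vert z\Vert^{\k})$ uniformly in $(t,\lambda)$. Applying \TFBFMfe (in its parametric, $T$-periodic form) to $P$ yields maps $\Kl{\leq}$ and $Y$, expressed as finite sums of functions that are homogeneous in $x$ with coefficients in $\CC^{\DSf}$ in $(t,\lambda)$, such that the homological equation for $P$ is satisfied up to order $\OO(\Vert x\Vert^{\k})$. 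Since $\Kl{\leq}(x,t,\lambda)-(x,0)=\OO(\Vert x\Vert^{2})$, composition with $X-P=\FR{\k}=\OO(\Vert z\Vert^{\k})$ preserves the error bound, which establishes \eqref{HIKlflow}. The derivative estimates
$\Dl^i\Dx^j\Delta\Kl{\leq}=\OO(\Vert x\Vert^{2-j})$ and $\Dl^i\Dx^j\Delta Y=\OO(\Vert x\Vert^{N+1-j})$ are immediate from the homogeneity of the generating terms, so items (a), (b), (c) of Theorem~\ref{maintheoremflow} hold with a regularity exponent $\rfm$ determined, as in Corollary~\ref{maintheorem2}, by whether $\Ap>\bp$, $\Ap=\bp$ or $\Ap<\bp$.

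The third step is to feed this approximate pair $(\Kl{\leq},Y)$ into Theorem~\ref{maintheoremflow}, which delivers the unique correction $\Kl{>}=\OO(\Vert x\Vert^{\k-N+1})$ so that $K=\Kl{\leq}+\Kl{>}$ solves \eqref{homequationflow} exactly with the prescribed~$Y$. The inclusion \eqref{KWsflow} is then immediate from $\psi(u;t,x,\lambda)\to 0$ as $u\to\infty$ and the invertibility of $K_x=\Id+\OO(\Vert x\Vert)$, while the $\CC^{1}$ regularity at the origin in the sense of Definition~\ref{difrem} and the $\CC^{\rv}$/$\CC^{\Sigma_{s^{>},\rv}}$ regularity on $\Vr\times\RR/(T\ZZ)\times\Lambda$ are read off directly from the regularity clauses in Theorem~\ref{maintheoremflow}, whose exponents coincide with those listed in Corollary~\ref{corollary:param}.

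The only point requiring any care, and the one I would single out as the main obstacle, is the global-in-$\lambda$ regularity: Hypothesis HP is formulated uniformly in $\Lambda$, but the approximate-solution machinery of~\cite{BFM2015b} gives uniform bounds only on a ball $\Lambda_{\lambda_0}=\Lambda\cap B_{\rho_0}(\lambda_0)$ around each fixed $\lambda_0\in\Lambda$. Following the end of the proof of Corollary~\ref{corollary:param}, one first obtains $K\in\CC^{\Sigma_{s^{>},\rv}}$ on $\Vr\times\RR/(T\ZZ)\times\Lambda_{\lambda_0}$ and then uses the uniqueness of $\Kl{>}=\OO(\Vert x\Vert^{\ell-N+1})$ provided by Theorem~\ref{maintheoremflow} to patch these local solutions into a single $K$ with the required regularity on the full domain $\Vr\times\RR/(T\ZZ)\times\Lambda$, finishing the proof.
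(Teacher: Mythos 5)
Your proposal is correct and follows essentially the same route the paper intends: the paper only remarks that the proof ``follows the same lines as the proof of Corollary~\ref{maintheorem2}'', i.e.\ check that H$\lambda$' implies H$\lambda$, split $X$ into its Taylor polynomial plus an $\OO(\Vert z\Vert^{\k})$ remainder, obtain $\Kl{\leq}$ and $Y$ from \TFBFMfe, verify items (a)--(c) via homogeneity, and conclude with Theorem~\ref{maintheoremflow}. Your additional handling of the local-in-$\lambda$ uniformity by restricting to balls $\Lambda_{\lambda_0}$ and patching via uniqueness is exactly the argument already used at the end of the proof of Corollary~\ref{corollary:param}, so nothing is missing.
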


\section{An algorithm to compute approximations of the invariant manifolds}\label{sectionalgorithm}

In this section we present the algorithm developed
in~\cite{BFM2015b} to compute approximate solutions of the
invariance equations~\eqref{eq:inv_equation}
and~\eqref{homequationflow}.

\subsection{Homological equations in the case of maps}
\label{sec:algotithmmaps} Let~$F$ be the map given by~\eqref{defF},
which we assume to be of class~$\CC^r$, with~$r$ large enough.
Taking advantage of the fact that~$F$ can be written as a Taylor
polynomial plus some higher order remainder, we look for approximate
solutions which are finite sum of homogeneous functions of
increasing degree. Then, for any $j\leq r-N+1$, we look for
$\KMil{j}$ and $\RMil{j+N-1}$ of the form
\begin{equation}\label{Step1KfRf}
\KMil{j}( x) = \sum_{l=1}^{j}  \Kl{l}( x), \qquad \RMil{j+N-1}( x) =
 x+\sum_{l=N}^{j+N-1} \Rl{l}( x),
\end{equation}
with $\Kl{1}(x)=(x,0)^{\top}$, $\Rl{N}( x)= p( x,0)$ and
$\Kl{l},\Rl{l} \in \Hog{l}$, satisfying
\begin{equation}\label{HIKlbis}
\begin{aligned}
\Em{j}(x) := F\circ \KMil{j}(x) & - \KMil{j} \circ \RMil{j+N-1}(x) \\
& =(\Em{j}_x, \Em{j}_y)(x) = \left(\oo\big(\Vert x
\Vert^{j+N-1}\big),\oo\big(\Vert x \Vert^{j+L-1}\big)\right),
\end{aligned}
\end{equation}
where the constant $L = \min\{N,M\}$ was introduced
in~\eqref{defeta}. We stress that the superscripts in the above
formula have two different meanings. While in~$\KMil{j}$
and~$\RMil{j+N-1}$ the superscript indicates that they are sums of
homogeneous functions of degree less or equal than~$j$ and $j+N-1$,
respectively, in~$\Em{j}$ denotes that it is the $j$-th error term.
Of course, the order of~$\Em{j}$ depends on~$j$ but also on $N$ and
$M$ and, as it is indicated in formula~\eqref{HIKlbis}, the $x$ and
$y$-components of~$\Em{j}$ may have different orders.

If, by induction, we assume that $\Em{j-1} =
(\Ef{x}^{j+N-1},\Ef{y}^{j+L-1})+\Ew{j}{}$, where $\Ef{*}^{\ell }$,
$* =x,y$, is a homogeneous function of degree $\ell$ and
\begin{equation}
\label{def:notationEs} \Ew{j}{}(x)  = \left(\oo\big(\Vert x
\Vert^{j+N-1}\big),\oo\big(\Vert x \Vert^{j+L-1}\big)\right),
\end{equation} then the
functions~$\Kl{j}=:(\Kl{j}_x,\Kl{j}_y) $ and~$\Rl{j+N-1}$ must
satisfy
\begin{equation}
\label{eqlKx}
D\Kl{j}_{ x}( x)  p( x,0) -D_x p( x,0) \Kl{j}_{ x}( x) - D_{y}p(
x,0) \Kl{j}_{y}( x)+ \Rl{j+N-1}( x)  = \Ef{ x}^{j+N-1}( x)
\end{equation}
and, depending on the values of~$N$ and~$M$,
\begin{align}
\label{eqlKyN<M}
\text{if $N<M$,} &  &D\Kl{j}_{y}( x)   p( x,0)   \phantom{ -D_y q( x,0) \Kl{j}_{y}( x) }   &=   \Ef{y}^{j+L-1}( x),  \\
\label{eqlKyN=M}
\text{if $N=M$,} & &D\Kl{j}_{y}( x)   p( x,0)   -D_y q( x,0) \Kl{j}_{y}( x)    &=  \Ef{y}^{j+L-1}( x), \\
\label{eqlKyN>M} \text{if $N>M$,} & &  -D_y q( x,0) \Kl{j}_{y}( x) &
=  \Ef{y}^{j+L-1}( x).
\end{align}

At this point it is worth to remark that one could try to find
solutions of the above equations in the space of homogeneous
polynomials of degree~$j$ and $j+N-1$, respectively. In the case
that $\KMil{j-1}$ and $\RMil{j+N-2}$ are sums of homogeneous
polynomials, the error term $\Ef{}^{j}$ is also a homogeneous
polynomial. But when~$N>M$ it is clear that $\Kl{j}_{y}( x) = -D_y
q( x,0)^{-1} \Ef{y}^{j+L-1}( x)$ cannot be, in general, a
polynomial, but a rational function. When~$N\le M$,
equations~\eqref{eqlKyN<M} and~\eqref{eqlKyN=M} are
$m\binom{j+L+n-2}{n-1}$ conditions while $\Kl{j}_{y}$, if assumed to
be a polynomial, would have only $m\binom{j+n-1}{n-1}$ free
coefficients. Hence, since $L\ge 2$, generically these equations
only admit polynomial solutions in the case that $n=1$ (which is the
case studied in~\cite{BFdLM2007}). It is easy to construct examples
where these obstructions do appear. See Section~6
in~\cite{BFM2015b}.

Now we summarize how we solve equations~\eqref{eqlKx},
\eqref{eqlKyN<M}, \eqref{eqlKyN=M} and~\eqref{eqlKyN>M}.

In the case $N>M$, since, as a consequence of hypothesis~H2, $D_y q(
x,0)$ is invertible, equation~\eqref{eqlKyN>M} is trivially solvable
in the space of homogeneous functions of degree~$j$.

In the case $N\le M$, let $\fpa( t, x)$ be the flow of
\begin{equation*}
\dot x=   p ( x,0).
\end{equation*}
As a consequence of~H3, $\fpa( t, x) \in V$, for
all $x\in V$ and $t>0$. We consider the homogeneous linear equations
\begin{align*}
\frac{d \fqa}{d  t}( t, x) = Dp(\fpa( t, x),0) \fqa( t, x), \\
\frac{d \fqa}{d  t}( t, x)  = Dq(\fpa( t, x),0) \fqa( t, x)
\end{align*}
and we denote by $\Mx( t, x)$ and $\My( t, x)$ their fundamental
matrices such that $\Mx(0, x)=\Id$, $\My(0, x)=\Id$, respectively.
From Theorem~3.2  in~\cite{BFM2015b}, the unique homogeneous
solution of equations~\eqref{eqlKyN<M} and \eqref{eqlKyN=M}  for
$\Klp{j}{y}$ is given by
\begin{equation}
\label{eq:solKy}
\begin{aligned}
&\Klp{j}{y}( x) &=& {\displaystyle \int_{\infty}^0 \El{j+L-1}{y}(\fpa( t, x))\,d t, } & \qquad \text{if   } N<M, \\
&\Klp{j}{y}( x) &= &{\displaystyle \int_{\infty}^0 \My^{-1}( t, x)
\El{j+L-1}{y}(\fpa( t, x))\,d t, } & \qquad   \text{if   }N=M.
\end{aligned}
\end{equation}
Theorem~3.2 in~\cite{BFM2015b} ensures that the above formulas
define homogeneous functions of degree~$j$.

The homogeneous solution of~\eqref{eqlKyN>M}, clearly unique, is
\[
\Klp{j}{y}( x) =  {\displaystyle \big (D_yq(x,0)\big )^{-1}
\El{j+L-1}{y}( x),}  \qquad  \text{if   }N>M.
\]

As for~\eqref{eqlKx}, notice that it is always possible to solve it
by choosing $\Klp{j}{ x}$ an arbitrary homogeneous function of
degree~$j$ and taking
\begin{equation}
\label{eq:solR} \Rl{j+N-1}( x) = \El{j+L-1}{ x}( x)-D_{y}p( x,0)
\Kl{j}_{y}( x) +D_x p( x,0)  \Klp{j}{ x}( x)- D\Klp{j}{ x}( x) p(
x,0).
\end{equation}
However, we prove in~\cite{BFM2015b} that, provided that $r$ is
large enough, there exists~$\df$ (which depends explicitly on the constants defined in~\eqref{defconstants})  such that if $\ell_*-N+2 \leq j$,
$\Rl{j+N-1}$ can be chosen as an arbitrary homogeneous function of
degree~$j+N-1$ and
\begin{multline}
\label{eq:solKx} \Klp{j}{x}( x) =\int_{\infty}^0 \Mx^{-1}( t, x)
\big [\El{j+L-1}{ x} (\fpa( t, x))-\Rl{j+N-1} (\fpa( t, x))  \\ -
D_yp(\fpa( t, x),0) \Klp{j}{y}(\fpa( t, x))\big ]\,d t.
\end{multline}
For instance, one can choose $\Rl{j+N-1}$ to be $0$,  if $j \ge
\ell_*-N+2$, which implies that the function~$\RMil{j+N-1}$
in~\eqref{Step1KfRf} can be taken as a \emph{finite} sum of
homogeneous functions.

\subsection{Homological equations in the case of flows}
\label{sec:algotithmflows} Let $U\subset \RR^{n+m}$ a neighborhood
of the origin and $ X: U \times \RR \to \RR^{n+m}$ be a $T$-periodic
vector field of the form~\eqref{defX}.
We look for $ K$ and $  Y$ of the form
\begin{equation*}
\KMil{j}( x,t) = \sum_{l=1}^{j}  \Kalg{l}( x,t), \qquad
\YMil{j+N-1}( x) = \sum_{l=N}^{j+N-1} \Yl{l}( x)
\end{equation*}
with $ K_{1}( x,t) = (x,0)^{\top}$, $\Yl{N}( x)= p( x,0)$ and
$\Kalg{l}$ a sum of two homogeneous functions: one of degree~$l$
independent of~$t$ and the other of order $\left(\oo\big(\Vert x
\Vert^{j+N-1}\big),\oo\big(\Vert x \Vert^{j+L-1}\big)\right)$.
The homogeneous terms $K^l$ in the statement of the
theorem are obtained by rearranging the sum above. They have to
satisfy the invariance equation \eqref{homequationflow} up to some
order~$j$ in the sense that the error term
\begin{equation*}
\Em{j}( x,t):= X(\KMil{j}( x,t),t)- D \KMil{j}( x,t) \YMil{j+N-1}(
x) -
\partial_t \KMil{j}( x,t)
\end{equation*}
satisfies
\begin{equation}\label{HIKlbisflow}
\Em{j} (x)=  (\Em{j}_x,\Em{j}_y)(x) = \left(\oo\big(\Vert x
\Vert^{j+N-1}\big),\oo\big(\Vert x \Vert^{j+L-1}\big)\right) .
\end{equation}

If, by induction, we assume that~\eqref{def:notationEs} is satisfied
(taking into account the time dependence) the
functions~$\Kalg{j}=(\Kalg{j}_x,\Kalg{j}_{y}) $ and~$\Yl{j+N-1}$
must satisfy
\begin{multline}
\label{Kxellflow} D\Kalg{j}_{x} (x,t)p(x,0) - D_x p(x,0)
\Kalg{j}_{x}(x,t) -D_y p(x,0)\Kalg{j}_{y}(x,t) \\ + \Yl{j+N-1} (x) +
\partial_t \Kalg{j}_{x}(x,t)  - \Ef{x}^{j+N-1}(x,t) = \oo\big(\Vert x
\Vert^{j+N-1}\big),
\end{multline}
and
\begin{equation}
\label{Kyellflow} D\Kalg{j}_{y}(x,t) p(x,0) - D_y q(x,0)
\Kalg{j}_{y}(x,t)  +\partial_t \Kalg{j}_{y}(x,t) -  \Ef{y}^{j+L-1}(
x,t)= \oo\big(\Vert x \Vert^{j+L-1}\big).
\end{equation}
Equation~\eqref{Kyellflow}, depending on the values of~$N$ and~$M$,
reads
\begin{align*}
\text{if $N<M$,} & \quad  D\Kalg{j}_{y}( x,t)   p( x,0) & &+ \partial_t \Kalg{j}_{x}(x,t) -   \Ef{y}^{j+L-1}( x,t) = \oo\big(\Vert x \Vert^{j+L-1}\big),  \\
\text{if $N=M$,} & \quad  D\Kalg{j}_{y}( x,t)   p( x,0)   &- D_y q( x,0)  \Kalg{j}_{y}( x,t)  &  + \partial_t \Kalg{j}_{x}(x,t)- \Ef{y}^{j+L-1}( x,t)= \oo\big(\Vert x \Vert^{j+L-1}\big), \\
\text{if $N>M$,} & \quad  & -D_y q( x,0) \Kalg{j}_{y}( x,t)
&+ \partial_t \Kalg{j}_{x}(x,t)- \Ef{y}^{j+L-1}( x,t)= \oo\big(\Vert x \Vert^{j+L-1}\big).
\end{align*}
We remark that, unlike the case of equations~\eqref{eqlKx}
to~\eqref{eqlKyN>M}, the functions $\Kalg{j}$ and $\Yl{j+N-1}$ we
obtain cancel out the term $\Ef{}^{j}$ in~\eqref{Kxellflow}
and~\eqref{Kyellflow} but introduce new terms of higher order.

For a $T$-periodic function $h$, we denote by $\ME{h}$ its mean, that is,
\[
\ME{h}(x) = \frac{1}{T}\int_{0}^{T} h(x,t) \,dt,
\]
and $\ZE{h}=h-\ME{h}$ its oscillatory part. If equations \eqref{Kxellflow} and \eqref{Kyellflow}
are satisfied for some $\Kalg{j}$ periodic, then it is clear that
the mean $\ME{\Kalg{j}}$ has to satisfy the equations
\begin{equation}
\label{eq:inductionflowsmean}
\begin{aligned}
D\ME{\Kalg{j}_{x}} (x)p(x,0) - D_x p(x,0) \ME{\Kalg{j}_{x}}(x)  -D_yp(x,0)\ME{\Kalg{j}_y}(x) & \\
 + \Yl{j+N-1} (x) - \ME{\Ef{x}^{j+N-1}}(x) & = \oo\big(\Vert x \Vert^{j+N-1}\big),\\
D\ME{\Kalg{j}_{y}}(x) p(x,0) - D_y q(x,0)  \ME{\Kalg{j}_{y}}(x)  -
\ME{\Ef{y}^{j+L-1}}( x) & = \oo\big(\Vert x \Vert^{j+L-1}\big).
\end{aligned}
\end{equation}
These equations can be solved in the same way as~\eqref{eqlKx},
\eqref{eqlKyN<M}, \eqref{eqlKyN=M} and~\eqref{eqlKyN>M}, in the
previous section. We conclude that $\ME{\Kalg{j}}$ and $\Yl{j+N-1}$
exist and they both have the appropriate orders, i.e, degree $j$ and
$j+N-1$ respectively.

Now we impose that
\begin{equation}
\label{eq:oscillatorypart}
\partial_ t \ZE{\Kalg{j}}( x,t) = (\ZE{\Ef{x}^{j+N-1}}( x,t),\ZE{\Ef{y}^{j+L-1}}( x,t) )
\end{equation}
and that $\ZE{\Kalg{j}}$ has zero mean. Consequently,
$\ZE{\Kalg{j}}(x) =\left(\oo\big(\Vert x
\Vert^{j+N-1}\big),\oo\big(\Vert x \Vert^{j+L-1}\big)\right)$.

We conclude that $\Kalg{j}=\ME{\Kalg{j}} + \ZE{\Kalg{j}}$ and
$\Yl{j+N-1}$ satisfy equations \eqref{Kxellflow} and
\eqref{Kyellflow} and then \eqref{HIKlbisflow} is satisfied.

\begin{remark}
The $\Kalg{j}$ found are not homogeneous functions, but sums of
homogeneous functions. Concretely, $\Kalg{j}_x$ has a term of order
$j$ and another of order $j+N-1$. Analogously, $\Kalg{j}_{y}$ has a
term of order $j$ and another of order $j+L-1$.
\end{remark}

As in Section~\ref{sec:algotithmmaps}, if $L=N$, then
\eqref{HIKlbisflow} with $j=\ell-N+1$, implies that
$$
\Em{j}(x,t):=X(\KMil{j}(x,t),t) - D \KMil{j}( x,t) \YMil{j+N-1}( x)
- \partial_t \KMil{j}( x,t) =   \oo\big(\Vert x \Vert^{j+N-1}\big)
$$
and we are done in this case. The case $L=M < N$ requires an extra
argument which is totally analogous as in the case of maps, in
Section~\ref{sec:algotithmmaps}.

\section{Example. The elliptic spatial restricted three body
problem}\label{sec:esrtbp}

We have pointed out in the previous section that, in general, the
invariant manifolds of a parabolic fixed point do not have
polynomial expansions if their dimension is greater than one,
regardless of the regularity of the map. However, it may be possible
that the system of equations defined by~\eqref{eqlKx}
and~\eqref{eqlKyN<M}--\eqref{eqlKyN>M} admits polynomial homogenous
solutions.
Here we take advantage of the expressions~\eqref{eq:solKy},
\eqref{eq:solR} and~\eqref{eq:solKx} to show that this is the case
of the \emph{parabolic infinity} in the elliptic spatial restricted
three body problem.

The spatial elliptic restricted three body problem is a simplified
version of the spatial three body problem where one of the bodies is
assumed to have zero mass while the other two, named the primaries,
evolve describing Keplerian ellipses around their center of mass.

We introduce $\hat q (f) = (\rho (f) \cos f, \rho(f) \sin f,0)$,
where, for a given eccentricity~$0\le e <1$,
\[
\rho(f) = \frac{1-e^2}{1+e\cos f}.
\]
Rescaling time and mass units, we can assume that the masses of the
primaries are $\mu$ and $1-\mu$, respectively, and their positions
are given by $q_1 = \mu \hat q$ and $q_2 = -(1-\mu) \hat q$, where
$f$ denotes the so-called true anomaly which satisfies
\begin{equation*}
\frac{df}{dt} = \frac{(1+e \cos f)^2}{(1-e^2)^{3/2}}.
\end{equation*}
Then, denoting by $q\in \RR^3$ the position of the third body, the
equations for $q$ are
\[
\ddot{q} = -(1-\mu) \frac{q-q_1}{r_1^3} -\mu \frac{q-q_2}{r_2^3},
\]
where $r_i = \|q - q_i\|$, $i=1,2$. Introducing the momenta $p= \dot
q$, this system is Hamiltonian with respect to
\begin{equation*}
H(q,p,t) = \frac{\|p\|^2}{2} - U(q,t), \qquad U(q,t) =
\frac{1-\mu}{r_1} + \frac{\mu}{r_2}.
\end{equation*}

Our aim is to study the parabolic invariant manifolds of infinity.
To this end, we start by considering spherical
coordinates~$(r,\alpha,\theta)$ in $\RR^3$, namely $q = (r \cos
\alpha \cos \theta, r \sin \alpha \cos \theta, r \sin \theta)$. Let
$(R, A, \Theta)$ be their conjugated momenta, which can be obtained
through a Mathieu transformation. They satisfy
\[
p = m(r,\alpha,\theta) \begin{pmatrix} R \\ A \\ \Theta
\end{pmatrix}, \qquad
m(r,\alpha,\theta) = \begin{pmatrix} \cos \alpha \cos \theta &
-\frac{\sin
\alpha}{r \cos \theta} & -\frac{\cos \alpha \sin \theta}{r} \\
\sin \alpha \cos \theta & \frac{\cos \alpha}{r \cos \theta} &
-\frac{\sin \alpha \sin \theta}{r} \\
\sin \theta & 0 & \frac{\cos \theta}{r}
\end{pmatrix}.
\]
The new Hamiltonian is
\[
\hat H(r,\alpha,\theta,R,A, \Theta,t) = \frac{1}{2} \left(
\frac{A^2}{r^2 \cos^2 \theta}+\frac{\Theta^2}{r^2}+ R^2\right) -
\hat U(r,\alpha,\theta,t),
\]
with
\begin{equation}
\label{potentialinpolarcoordinates}
\begin{aligned}
\hat U (r,\alpha,\theta,t)= & \frac{1-\mu}{\sqrt{ r^2-2\mu
\rho(f)r\cos(\alpha-f)\cos \theta + \mu^2 \rho^2(f) }} \\ & +
\frac{\mu}{\sqrt{ r^2+2(1-\mu) \rho(f)r\cos(\alpha-f)\cos \theta +
(1-\mu)^2 \rho^2(f) }} \\
= & \frac{1}{r} - \frac{\mu(1-\mu)}{2}(1-3 \cos(\alpha
-f))\frac{\rho^2(f)\cos^2 f \cos^2\theta}{r^3} + \OO
\left(\frac{1}{r^4}\right).
\end{aligned}
\end{equation}
To study the behavior of the system at $r=\infty$, we perform the
non-canonical change of variables due to McGehee $r = 2/z^2$. Since
$\dot r = R$ and the change does not involve the remaining
variables, the equations of motion in the new variables are
\[
\begin{aligned}
\dot z & = -\frac{1}{4}z^3 R \\
\dot \alpha & = \partial_A \hat H_{\mid r=2/z^2} = \frac{A z^4}{4 \cos^2 \theta} \\
\dot \theta & = \partial_\Theta \hat H_{\mid r=2/z^2} = \frac{1}{4} \Theta z^4 \\
\dot R & =  -\partial_r \hat H_{\mid r=2/z^2} = \frac{A^2 z^6}{8
\cos^2 \theta }+\frac{\Theta^2 z^6}{8}+
\partial_r \hat U(2/z^2,\alpha,\theta,t) = -\frac{1}{4} z^4 + \OO(z^6)\\
\dot A & =  -\partial_\alpha \hat H_{\mid r=2/z^2} =
\partial_{\alpha} \hat U(2/z^2,\alpha,\theta,t) = \OO(z^6) \\
\dot \Theta & =  -\partial_\theta \hat H_{\mid r=2/z^2} = -\frac{A^2
z^4 \sin \theta}{4 \cos^3 \theta} + \partial_{\theta} \hat
U(2/z^2,\alpha,\theta,t) = -\frac{A^2  z^4 \sin \theta}{4 \cos^3
\theta} +\OO(z^6).
\end{aligned}
\]
Notice that the set $\{z=0, \;R=0\}$ is invariant and foliated by
fixed points. We focus on those with $\theta= \Theta = 0$, $\alpha =
\alpha_0$, $A=A_0$. To apply our theory, we perform the following
local change of variables
\[
\hat \theta = \frac{\theta}{z}, \quad \hat \Theta =
\frac{z\Theta}{\theta}, \quad \hat \alpha =
\frac{\alpha-\alpha_0+AR}{z}, \quad \hat A = \frac{A-A_0}{z},
\]
which transforms the system into
\[
\begin{aligned}
\dot z & = -\frac{1}{4}z^3 R & \qquad \dot R & =   -\frac{1}{4} z^4 + z^6 \OO_0 \\
\dot{\hat \alpha} &  = \frac{1}{4}z^2 R \hat \alpha + z^5 \OO_0  &
\qquad
 \dot{\hat A} & = \frac{1}{4} \hat A z^2 R + z^5 \OO_0 \\
\dot{\hat \theta} &  = \frac{1}{4} z^2 R \hat \theta + \frac{1}{4}
z^3 \hat \theta \hat \Theta &  \qquad
 \dot{\hat \Theta} &
= -\frac{1}{4}z^2 R \hat \Theta -\frac{1}{2} z^3 \hat \Theta^2 + z^5
\OO_0,
\end{aligned}
\]
where all the terms up to degree $6$ in the local variables are
shown (we write $\OO_k$ meaning $\OO(\|(z,R,\hat \alpha, \hat A,\hat
\theta,\hat \Theta)\|^k)$). Notice that the leading terms are of
degree~$4$. Finally, it will be convenient to introduce
\[
u = \frac{1}{2}(z+R), \qquad v = \frac{1}{2}(z-R)
\]
so that the system, reordering equations,  becomes
\begin{equation}
\label{def:R3BPfinalcoordinates}
\begin{aligned}
\dot u & = -\frac{1}{4}(u+v)^3 u + (u+v)^6\OO_0 \\
\dot{\hat\Theta} & = -\frac{1}{4}(u+v)^2 (u-v) \hat \Theta
-\frac{1}{2} (u+v)^3 \hat \Theta^2+(u+v)^5 \OO_0 \\
\dot v & =   \frac{1}{4} (u+v)^3 v + (u+v)^6 \OO_0 \\
 \dot{\hat \alpha} &  = \frac{1}{4}(u+v)^2 (u-v) \hat \alpha +
(u+v)^5 \OO_0 \\
\dot{\hat A} & = \frac{1}{4}  (u+v)^2 (u-v) \hat A +  (u+v)^5  \OO_0\\
\dot{\hat \theta} &  = \frac{1}{4} (u+v)^2 (u-v) \hat \theta
+\frac{1}{2} (u+v)^3 \hat \theta \hat \Theta.
\end{aligned}
\end{equation}
We emphasize that the leading terms do not depend on~$t$, but the
remainders do depend $2\pi$-periodically on~$t$.

Let $X$ denote the vector field defined
by~\eqref{def:R3BPfinalcoordinates}. We can write the vector field
in the form~\eqref{defX} taking $x=(u,\hat \Theta)$, $y=(v,\hat
\alpha, \hat A, \hat \theta)$ and
\begin{equation}
\label{def:pqrestricted}
p(x,y) = \begin{pmatrix} -\frac{1}{4}(u+v)^3 u \\
-\frac{1}{4}(u+v)^2 (u-v) \hat \Theta \end{pmatrix}, \qquad q(x,y) =
\begin{pmatrix}
\frac{1}{4} (u+v)^3 v \\ \frac{1}{4}(u+v)^2 (u-v) \hat \alpha \\
\frac{1}{4}  (u+v)^2 (u-v)\hat A \\ \frac{1}{4} (u+v)^2 (u-v) \hat
\theta
\end{pmatrix}.
\end{equation}

\begin{theorem}
Let $W$ be a perturbation of~$\hat U$
in~\eqref{potentialinpolarcoordinates} of the form $W = \hat U + V$,
where
\[
V(r,\alpha,\theta,t) = \frac{1}{r^3} \hat V(r,\alpha,\theta,t)
\]
(in spherical variables) is such that the equations of motion leave
the plane $\theta = \Theta = 0$ invariant (that is,
$\partial_{\theta} V_{\mid\theta=0} =0$) and $\hat V$ is analytic in
$1/r$ and the rest of its arguments.

Then, after the changes of variables described above, the equations
of motion are given by~\eqref{def:R3BPfinalcoordinates}. The origin
is a parabolic fixed point. It has an analytic  stable invariant two
dimensional manifold which admits a parametrization of the form
$K(x,t) = (x,0) + \tilde K(x,t)$, where
\[
\tilde K (x,t) = \OO(\|x\|^2), \qquad u>0,\;\tilde \Theta >0
\]
such that
\begin{equation*}
X(K(x,t),t) = D K(x,t) Y(x) + \partial_t K(x,t),
\end{equation*}
with $Y(x)=p(x,0) + \OO(\|x\|^5)$ is a polynomial of degree~$7$.

The function $\tilde K(x,t)$ is $2\pi$-periodic in~$t$ and, for all
$\ell \ge 7$,
\begin{equation*}
\tilde K(x,t) = \sum_{j=2}^{\ell} \breve  K^{j}(x,t)+
\OO(\|x\|^{\ell+1})
\end{equation*}
where $\breve  K^{j}$ are homogeneous polynomials of degree~$j$.
That is, the stable invariant manifold admits polynomial
approximation up to any order.
\end{theorem}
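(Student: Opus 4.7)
The plan is to apply the corollary at the end of Section~\ref{sec:flow} to the vector field $X$ from~\eqref{def:R3BPfinalcoordinates} together with the analytic $2\pi$-periodic remainder produced by the perturbing potential. After the McGehee substitution $r=2/z^2$ one has $1/r^3=(u+v)^6/8$, so this perturbation contributes analytic terms of order at least~$(u+v)^5$ in the equations of motion and does not affect the leading polynomial parts~\eqref{def:pqrestricted}. We take the splitting $x=(u,\hat\Theta)$, $y=(v,\hat\alpha,\hat A,\hat\theta)$, with $N=M=4$, $L=4$, $\eta=1$.

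First I would verify Hypotheses~H$\lambda$' and HP on the star-shaped set $V=\{u>0,\;\hat\Theta>0\}$, with a norm adapted to this octant. A direct computation yields
\[
p(x,0)=-\tfrac{1}{4}\,u^3\,x,\qquad q(x,0)\equiv 0,\qquad D_y q(x,0)=\tfrac{1}{4}\,u^3\,\Id,
\]
from which $\ap>0$, $\Bq>0$, and H3 holds since $x+p(x,0)=(1-\tfrac14 u^3)x\in V$ at distance of order~$u^4$ from $\partial V$. Hypothesis~HP is automatic from the polynomial form of~$p,q$ and the analyticity of the perturbation. The bound $\Ap>\bp$, needed for analyticity in~$x$, follows in the chosen norm from the explicit form of $\Id+D_x p(x,0)$. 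The corollary then supplies a true analytic stable parametrization $K=\KMil{\ell}+K^{>}$ with reparametrization~$Y$ a polynomial of degree~$\leq 7$, obtained from the free choice $\Yl{j+N-1}=0$ for $j>\df-N+2$ in the algorithm.

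The heart of the theorem is that, although the algorithm of Section~\ref{sec:algotithmflows} generically yields only rational solutions, for the particular $X$ under consideration it produces polynomial ones at every order. Two structural features collapse the obstruction. Since $p(x,0)=-\tfrac14 u^3 x$ is radial, Euler's identity gives, for any homogeneous $\Kalg{j}_y$ of degree~$j$,
\[
D\Kalg{j}_y(x)\,p(x,0)=-\tfrac{j}{4}\,u^3\,\Kalg{j}_y(x),
\]
and since $D_y q(x,0)=\tfrac14 u^3\,\Id$ is a scalar multiple of the identity, the $y$-component of the mean equation in~\eqref{eq:inductionflowsmean} becomes algebraic,
\[
-\tfrac{j+1}{4}\,u^3\,\ME{\Kalg{j}_y}(x)=\ME{\Ef{y}^{j+3}}(x),
\]
and similarly for the $x$-component after the free choice $\Yl{j+N-1}=0$. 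The oscillatory part~\eqref{eq:oscillatorypart} is determined by a pure antiderivative in~$t$, which preserves polynomiality in~$x$. By induction on~$j$ one then argues that each $\Ef{y}^{j+3}$ is a polynomial divisible by~$u^3$, so that $\Kalg{j}_y$ and $\Kalg{j}_x$ are polynomials: the base step uses that each component of $q$ has the corresponding $y$-coordinate as a factor and that the perturbation contributes only terms of order~$(u+v)^6$ or higher; the inductive step uses that the quantities subtracted in the algorithm carry a factor of~$u^4$ in their leading part, so divisibility by~$u^3$ propagates.

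Once polynomial approximations $\KMil{\ell}$ and $\YMil{\ell+3}$ have been established to all orders~$\ell\geq 7$, Theorem~\ref{maintheoremflow} promotes them to a genuine analytic $K$, and uniqueness applied at two orders $\ell_1<\ell_2$ shows that this analytic $K$ is independent of~$\ell$. Rearranging $\KMil{\ell}=\sum_l\Kalg{l}$ into truly homogeneous pieces $\breve K^j$ in~$x$ then yields the stated expansion $\tilde K(x,t)=\sum_{j=2}^{\ell}\breve K^j(x,t)+O(\|x\|^{\ell+1})$ for all~$\ell\geq 7$. The $2\pi$-periodicity of~$\tilde K$ in~$t$ follows from composing~$K$ with the time-shift $t\mapsto t+2\pi$ that leaves~$X$ invariant, and invoking uniqueness. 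The main obstacle is the inductive divisibility claim for $\Ef{y}^{j+3}$: one must track how the factor~$u^3$ and the factors of the individual $y$-coordinates propagate through substitution into~$X$, in particular through the remainder terms inherited from the perturbation after the McGehee and block coordinate changes.
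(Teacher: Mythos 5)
You have the right skeleton (verify the hypotheses of Theorem~\ref{maintheoremflow}, run the algorithm of Section~\ref{sec:algotithmflows}, show by induction that the homological equations admit polynomial solutions, and use the free choice of $\Yl{j+N-1}$ to stop $Y$ at degree~$7$), and your observation that $p(x,0)=-\tfrac14u^3x$ is radial and $D_yq(x,0)=\tfrac14u^3\Id$ is scalar, so that the $y$-equation in~\eqref{eq:inductionflowsmean} becomes $-\tfrac{j+1}{4}u^3\ME{\Kalg{j}_y}=\ME{\Ef{y}^{j+3}}$, is a correct algebraic restatement of the integral formula~\eqref{eq:solKy}. But the step you defer as ``the main obstacle'' is essentially the whole proof, and the invariant you propose to carry through the induction is too weak to close it. First, the $x$-equation is \emph{not} ``similar'': $D_xp(x,0)=\bigl(\begin{smallmatrix}-u^3 & 0\\ -\frac34u^2\hat\Theta & -\frac14u^3\end{smallmatrix}\bigr)$ is not a multiple of the identity, so with $\Yl{j+3}=0$ one must invert $\tfrac j4u^3\Id+D_xp(x,0)$, whose inverse carries an off-diagonal entry of size $\hat\Theta/u^4$; hence polynomiality of $K^j_x$ needs the $u$-component of the error divisible by $u^4$ (the paper proves $u^6$), not merely $u^3$. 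Moreover that matrix is singular at $j=4$ (equivalently, the integrals~\eqref{eq:solKx} diverge for $j\le4$), so $Y^5,Y^6,Y^7$ cannot be taken zero and must be computed (this is exactly where ``$Y$ polynomial of degree~$7$'' comes from); your blanket choice $\Yl{j+N-1}=0$ has to be restricted to $j\ge5$, with $K^j_x=0$ and nonzero $Y^{j+3}$ for $j\le4$. Second, to propagate divisibility through the new error one needs much more than ``$u^3\mid\Ef{y}^{j+3}$ and a factor $u^4$ in the subtracted terms'': the paper's Claim carries simultaneously $E^{j+4}=u^5\OO_{j-1}$, $E^{j+4}_{u,v}=u^6\OO_{j-2}$, $K^j=u^2\OO_{j-2}$, $K^j_{u,v}=u^3\OO_{j-3}$, $\widetilde K^{j+3}=u^5\OO_{j-2}$, $\widetilde K^{j+3}_{u,v}=u^6\OO_{j-3}$ and the explicit form of $Y^5,\dots,Y^7$, and uses the structure~\eqref{eq:Xu5factor} of $X$ to verify all of these for $T_1$, $T_2$, $T_3$; with your weaker statement the inductive step does not go through, and no argument is given for it.

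There are also problems in your verification of the standing hypotheses. On the full quadrant $V=\{u>0,\hat\Theta>0\}$ one has $\ap=\inf_{V_\r}u^3/(4\|x\|^3)=0$ and likewise $\Bq=0$ (the ratio $u/\|x\|$ is not bounded below), so H1/H$\lambda$ fail there; and H3 fails on any straight cone, because $x+p(x,0)=(1-\tfrac14u^3)x$ lies on the same ray as $x$, so its distance to $(\Vr)^c$ is \emph{not} of order $u^4$ near $\partial V$ --- one must work on a suitable proper subsector adapted to the flow. Finally, the claim ``$\Ap>\bp$ follows in the chosen norm'' is false for this system in \emph{any} norm: since $p(x,0)=-\tfrac14u^3x$, one has $\bp=\sup_{V_\r}u^3/(4\|x\|^3)$ in every norm, while the eigenvalue $1-\tfrac14u^3$ of $\Id+D_xp(x,0)$ forces $\Ap\le\ap\le\bp$, with strict inequality on any open sector; so the analyticity of the manifold cannot be obtained from that criterion, and your argument for analyticity is not valid as written.
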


\begin{proof}
System~\eqref{def:R3BPfinalcoordinates} satisfies hypotheses (a) and
(b) of Theorem~\ref{maintheoremflow}. Hence, in order to obtain the
claim, we only need to check hypothesis (c). It is enough to find
approximate solutions of the invariance equation
\begin{equation}
\label{eq:homological}
 X(K(x,t),t) -
D K(x,t) Y(x) -
\partial_t K(x,t) = 0.
\end{equation}
We show that there are indeed approximate solution of these equation
up to any order and that these solutions are sums of homogeneous
polynomials.

We use the construction described in
Section~\ref{sec:algotithmflows} to find approximate solutions of
the above equation. The procedure applies  in the region $\{u>0,\hat
\Theta
>0\}$.

The explicit expression of the flow of the vector field
$p(x,0)$, with $p$ on \eqref{def:pqrestricted}, is
\begin{equation*}
\fpa( t, x) = \frac{1}{(1+\frac{3}{4}tu^3)^{1/3}}
\begin{pmatrix}
u \\ \hat \Theta
\end{pmatrix}.
\end{equation*}
Let $\Mx( t, x)$ and $\My( t, x)$ be the fundamental matrices of the
linear equations
\[
\begin{aligned}
\frac{d \fqa}{d  t}( t, x)  = D_x p(\fpa( t, x),0) \fqa( t, x) \\
\frac{d \fqa}{d  t}( t, x)  = D_y q(\fpa( t, x),0) \fqa( t, x)
\end{aligned}
\]
such that $\Mx(0, x)=\Id$ and $\My(0, x)=\Id$, respectively. We have
that
\begin{equation*}
\Mx^{-1} ( t, x) =
\begin{pmatrix}
\left( 1+\frac{3}{4}tu^3 \right)^{4/3} & 0 \\
\frac{3}{4} t u^2 \hat \Theta \left( 1+\frac{3}{4}tu^3 \right)^{1/3}
& \left( 1+\frac{3}{4}tu^3 \right)^{1/3}
\end{pmatrix}
\end{equation*}
and
\begin{equation*}
\My ( t, x) = \left(1+\frac{3}{4}tu^3\right)^{1/3} \Id_{4\times 4}.
\end{equation*}

Along this proof we will deal with several objects that will be
homogeneous polynomials. Their superscripts will denote their
degree. A slightly different notation is used for $E^{>j}$.
See~\eqref{HIKlbis}-\eqref{def:notationEs}.

We write the vector field in~\eqref{def:R3BPfinalcoordinates} as~$X
= \sum_{l\ge 4} X^l$, where $X^l$  depends $2\pi$-periodically
on~$t$. Following the algorithm described in
Section~\ref{sec:algotithmflows} with $N=M=4$, we look for solutions
of the equation~\eqref{eq:homological} of the form
\[
K(x,t) = \sum_{l\ge 1} \Kalg{l}(x,t), \qquad Y(x) = \sum_{l\ge 4}
Y^l(x),
\]
where  $\Kalg{l}$ depends $2\pi$-periodically in~$t$ and it is of the
form
\[
\Kalg{l}(x,t)=  K^l(x) + \widetilde{ K}^{l+3}(x,t),\qquad
\text{with}\;\; \widetilde{ K}^{l+3}=\ZE{\Kalg{l}}.
\]
and $K^1(x) = (x,0)$, $\widetilde{ K}^{4}(x,t)= 0$, $Y^4(x) =
p(x,0)$. The homogeneous functions $\breve K^l$ in the statement of
the theorem are obtained by rearranging the sum above.

We recall that $x=(u, \hat \Theta)$ and $y=(v,\hat \alpha, \hat A,
\hat \theta)$. It is clear from~\eqref{def:R3BPfinalcoordinates}
that the homogeneous polynomials $X^l$ satisfy that
\begin{equation} \label{eq:Xu5factor}
\begin{aligned}
X_{\xi}^5(x,0,t) & = u^5 \hat X_{\xi}^{0}(x,t),
 \qquad  \qquad \xi = \hat \alpha, \hat A, \hat \theta,\\
 X_{u}^5(x,0,t) & = X_{v}^5(x,0,t)  = 0,\\
 X_{\hat \Theta}^5 (x,0,t)  & = -\frac{1}{2} u^3 \hat \Theta^2 + u^5
\hat X_{\hat \Theta}^0 (x,t),
\end{aligned}
\end{equation}
and, for $l\ge 6$,
\begin{equation*}
\begin{aligned}
X^l (x,0,t)  & = u^5 \hat X^{l-5} (x,t) \\
X_{\xi}^l(x,0,t) & = u^6 \hat X_{\xi}^{l-6}(x,t), \qquad \xi = u,v.
\end{aligned}
\end{equation*}

The statement is a consequence of the following claim. We make the
convention that $\OO_j = 0$ if $j<0$.

\begin{claim}
\begin{enumerate}
\item[(i)]
$K^j(x) = u^2\OO_{j-2}$, $K^j_{u,v}(x) = u^3 \OO_{j-3}$, $j\ge 2$.
$K_x^j = 0$ if $2\le j \le 7$.

$\widetilde K^{j+3}(x,t) = u^5 \OO_{j-2}$, $\widetilde
K^{j+3}_{u,v}(x,t) = u^6 \OO_{j-2}$, $j\ge 3$.
\item[(ii)] $Y^5(x) = \begin{pmatrix} a_1 u^5 \\ u^3 (a_2 \hat \Theta^2 + a_3 u^2) \end{pmatrix}$, with
 $a_1,a_2,a_3\in \RR$,
$Y^j(x) = (u^6 \OO_{j-6},u^5 \OO_{j-5})^{\top}$, $6\le j \le 7$ and
$Y^j = 0$ for $j\ge 8$.
\item[(iii)] Denoting $K^{\le j} = \sum_{l=1}^{j} (K^l+ \widetilde K^{l+3})$, $Y^{\le j} =
\sum_{l=4}^{j} Y^l$,
\[
E^{>j}(x,t) = X(K^{\le j}(x,t),t) - D K^{\le j}(x,t)   Y^{\le
j+3}(x) - \partial_t K^{\le j}(x,t),
\]
and $E^{>j} = E^{j+4} + \hat E^{>j+1}$ with $E^{j+4}(x,t) = (\OO(\|x\|^{j+4}))$,
$\hat E^{>j+1} (x,t) = (\oo(\|x\|^{j+4}))$, then,
\[
E^{j+4}(x,t) = u^5 \OO_{j-1}, \quad  E^{j+4}_{u,v}(x,t) = u^6
\OO_{j-2}, \quad j \ge 2.
\]
\end{enumerate}
In (i) and (ii) the terms $\OO_j$ are \emph{homogenous polynomials}
in~$x$ of degree~$j$ while in~(iii) $\OO_j$ are analytic functions
in $x$ of order~$j$.
\end{claim}

The following fact will be used repeatedly without mention. Given
any monomial $Z(x) = u^{j_1}\hat \Theta^{j_2}$ and denoting
$\{e_1,e_2\}$ the canonical basis of $\RR^2$, there exist $c_i\in
\RR$, depending only on $j_1$ and $j_2$, such that
\[
\begin{aligned}
\int_{\infty}^0 \Mx^{-1} ( t, x)  Z( \fpa( t, x)) e_1
\,dt & = c_1 \frac{Z(x)}{u^3} e_1 + c_2 \frac{Z(x)}{u^4}\hat \Theta e_2,\\
\int_{\infty}^0 \Mx^{-1} ( t, x)  Z ( \fpa( t, x)) e_2 \,dt & = c_3
\frac{Z(x)}{u^3} e_2
\end{aligned}
\]
and, denoting $\{e_j'\}_{j=1,\dots,4}$ the canonical basis of
$\RR^4$, there exists $c\in \RR$, depending only on $j_1$ and $j_2$,
such that
\[
 \int_{\infty}^0 \My^{-1} ( t, x)
Z ( \fpa( t, x))e_j' \,dt  =c \frac{Z(x)}{u^3} e_j', \qquad
j=1,\dots,4.
\]
Indeed, it suffices to make the change $s= tu^3$ in the integrals.
Obviously, the previous integrals are only convergent when
$j_1+j_2\ge 8$, for the first one, when $j_1+j_2\ge 5$, for the
second one and $j_1+j_2\ge 3$ for the last one.

We prove the claim by induction. We start with the case $j=2$.

According to the algorithm, using that $X^4 = (p,q)^\top$ and
$Y^4(x) = p(x,0)$ in equations~\eqref{eq:inductionflowsmean} for
$j=2$, the functions $K^2$ and $Y^5$ must satisfy
\[
D K^2 Y^4 -(D X^4 \circ K^1) K^2 +\begin{pmatrix} \Id \\ 0
\end{pmatrix} Y^5 = \ME{ E^{5}},
\]
where $E^{5} = X^5 \circ K^1$ denotes the terms of degree $5$ of
$E^{>1}$ and we recall that $\ME{Z}$ denotes the mean of a periodic
function~$Z$. Using~\eqref{eq:solKy}, the equation for $K_y^2$ has
the homogeneous solution
\begin{equation}
\label{eq:piyK2} K_y^2 ( x) = \int_{\infty}^0 \My^{-1} ( t, x) \ME{
E_y^{5}} ( \fpa( t, x)) \,dt.
\end{equation}
Since, in view of~\eqref{eq:Xu5factor},  $\ME{X_y^5} \circ K^1 (x) =
\ME{X_y^5}  (x,0)= a_0 u^5$, we have that $K_y^2(x) = b_0 u^2$,
where $a_0,b_0\in\RR^4$. Furthermore, since $\ME{X_v^5} = 0$ and
$\My$ is a diagonal matrix, we deduce that $K_v^2 = 0$.

Once $K_y^2$ is found, we take $K_x^2 =0 $ and choose appropriately
$Y^5$, that is,
\[
Y^5(x) = D_y X^4_x (x,0)K_y^2 ( x)  +\ME{X_x^5} \circ K^1 (x) =
\begin{pmatrix} a_1 u^5 \\ a_2 u^3 \hat \Theta^2 + a_3 u^5
\end{pmatrix},
\]
with $a_1,a_2, a_3 \in \RR$, where we have used that, since $K_v^2 =
0$, and
\begin{equation}
\label{eq:D_y p(x,0)}
D_y X^4_x (x,0) = D_y p(x,0) = \begin{pmatrix} -\frac{3}{4} u^3 & 0 & 0 & 0 \\
-\frac{1}{4} u^2\hat \Theta & 0 & 0 & 0
\end{pmatrix},
\end{equation}
we have that $D_y p(x,0) K_y^2 ( x) = 0$.  This accounts for the
first part of~(ii).

To cancel the oscillatory part of $E^5$ we
use~\eqref{eq:oscillatorypart} and we choose~$\widetilde K^{5}$ with
zero mean such that
\[
\partial_t \widetilde K^{5}(x,t) =  \widetilde{E^5}(x,t).
\]
From~\eqref{eq:Xu5factor} we get that $\widetilde K^{5} (x,t) =
u^5\OO_0$ and $\widetilde K^{5}_{u} (x,t) = \widetilde K^{5}_{v}
(x,t) =0$.

With this choice of $K^{\le 2}$ and $Y^{\le 5}$ the algorithm
ensures that the remainder $E^{>2}(x,t)= \OO_6$. We have that
\[
\begin{aligned}
E^{>2}(x,t)  = & X(K^{\le 2}(x,t),t) - D K^{\le 2}(x,t)   Y^{\le
5}(x) -
\partial_t K^{\le 2}(x,t) \\
 = & X^6(x,0,t) +D X^5 (x,0,t) K^2(x) +\frac{1}{2} D^2 X^4(x,0) K^2(x)^{\otimes 2} \\
  & - D K^2(x) Y^5(x) + u^5 \OO_1 \\
 = & u^5 \OO_1.
\end{aligned}
\]
The last equality uses that $K_x^2 = 0$, $K_v^2 = 0$, $K_y^2(x) =
u^2 \OO_0$, the particular form of $Y^5$, $X^5$, $X^6$ and the fact
that $\frac{\partial^2 X^4}{\partial y^2} (x,0) = u \OO_1$.
Moreover, using that $K_x^2 = 0$, $K_v^2 = 0$, $X^5_{u,v} =0$ and
$X^6_v(x,0,t) = u^6 \OO_0$ one obtains that
\begin{equation}
\label{E6} E_{u,v}^{>2}(x,t) = u^6 \OO_0.
\end{equation}
This proves the claim for $j=2$.

Now we assume that we have obtained $K^{\le j-1}$, $Y^{\le j+2}$ and
$E^{>j-1}$, with $j\ge 3$, satisfying the induction hypotheses. The
equation for $K^j$ and $Y^{j+3}$ is
\[
D K^j Y^4 -(D X^4 \circ K^1) K^j+ \begin{pmatrix} \Id \\ 0
\end{pmatrix} Y^{j+3} =  \ME{E^{j+3}} .
\]
The function $K_y^j$ is obtained as we did for $K_y^2$
in~\eqref{eq:piyK2}.

Since $E^{j+3}(x,t) =u^5\OO_{j-2}$, we obtain that $K_y^j(x) = u^2
\OO_{j-2}$. By the same argument, using~\eqref{E6} and that $\My$ is
a diagonal matrix, one has $K_v^j(x) = u^3 \OO_{j-3}$.

To find $K^j_x$ and $Y^{j+3}$ we proceed in two different ways
according to whether $j\le 4$ or $j \ge 5$. The point is that for
$j\ge 5$ we can take $Y^{j+3} = 0$ choosing appropriately $K^{j}_x$.
However,  for $j \le 4$, the integrals involved in the computation
of $K^{j}_x$ do not converge.

If $j \le 4$, we choose $K_x^j = 0$ and
\[
Y^{j+3}(x) = D_y X^4_x \circ K^1 (x)K_y^j ( x)  +\ME{E_x^{j+3}} (x).
\]
Formula~\eqref{eq:D_y p(x,0)} and the induction hypothesis gives
that $Y^{j+3}(x) = (u^6\OO_{j-3},u^5\OO_{j-2})^{\top}$.

Instead, if $j \ge 5$, we choose $Y^{j+3} = 0$ and
\[
K_x^j ( x) = \int_{\infty}^0 \Mx^{-1} ( t, x) \ME{E_x^{j+3}} ( \fpa(
t, x)) \,dt.
\]
The induction hypothesis on $E^{j+3}$ gives $K_x^j ( x) = (u^3
\OO_{j-3},u^2\OO_{j-2})^{\top}$.

We choose~$\widetilde K^{j+3}$ with zero mean such that $\partial_t
\widetilde K^{j+3}(x,t) =  \widetilde E^{j+3}(x,t)$. Again from the
induction hypothesis we get that $\widetilde K^{j+3} (x,t) =
u^5\OO_{j-2}$ and $\widetilde K^{j+3}_{u,v} (x,t) = u^6\OO_{j-3}$.

Finally, we need to check the properties of~$E^{j+4}$. From the
definition of~$E^{>j}$,
\[
\begin{aligned}
E^{>j}(x,t)  = & E^{>j-1}(x,t) + X(K^{\le j}(x,t),t)-X (K^{\le j-1}(x,t),t) \\
 & -( D K^{\le j}(x,t)   Y^{\le
j+3}(x) - D K^{\le j-1}(x,t)   Y^{\le
j+2}(x)) \\
&  - (\partial_t K^{\le j}(x,t)  -
\partial_t K^{\le j-1}(x,t)) \\
 = & E^{>j-1}(x,t) + T_1(x,t) - T_2(x,t)- T_3(x,t),
\end{aligned}
\]
where $T_i$ are defined in the obvious way.

We have
\[
T_1  = \int_0^1 DX(K^{\le j-1}+s (K^j+\tilde K^{j+3}),t) (K^j+\tilde
K^{j+3})\,ds.
\]
Taking into account the structure of~$X$
in~\eqref{def:R3BPfinalcoordinates} a long but straightforward
computation gives
\[
T_1 (x,t) = u^5\OO_{j-2}, \quad (T_1)_{u,v} (x,t) = u^6\OO_{j-3}.
\]

For~$T_2$ we have
\[
T_2 = D K^{\le j-1}   Y^{j+3} + (D K^{j}+D\tilde K^{j+3})   Y^{\le
j+3}.
\]
A simple calculation gives that for $j\ge 3$,
\begin{align*}
DK^{\le j-1}(x,t) &  = \begin{pmatrix} 1 \OO_0 &   u \OO_0 & u^2
\OO_0 & u \OO_0 & u \OO_0 & u \OO_0
\\
u^3 \OO_0 & 1 \OO_0 & u^3 \OO_0 & u^2 \OO_0 & u^2 \OO_0 & u^2 \OO_0
\end{pmatrix}^{\top} ,\\
Y^{j+3}(x) &  = \begin{pmatrix} u^6 \OO_0
\\
u^5 \OO_0
\end{pmatrix}, \\
(DK^{\le j}+D\tilde K^{j+3})(x,t) &  = \begin{pmatrix} u^2 \OO_0 &
u \OO_0 & u^2 \OO_0 & u \OO_0 & u \OO_0 & u \OO_0
\\
u^3 \OO_0 & u^2 \OO_0 & u^3 \OO_0 & u^2 \OO_0 & u^2 \OO_0 & u^2
\OO_0
\end{pmatrix}^{\top} \\
\intertext{and} Y^{\le j+3}(x) &  = \begin{pmatrix} u^5 \OO_0
\\
u^3 \OO_2
\end{pmatrix},
\end{align*}
where here $\OO_j$ denotes a polynomial in~$x$ of order~$j$. This
implies
\[
T_2 (x,t) = u^5\OO_{j-2}, \qquad (T_2)_{u,v} (x,t) = u^6\OO_{j-3}.
\]
Finally, $T_3 = \partial_t \tilde K^{j+3}$ and the induction
hypotheses gives (iii) for~$j$.

Note however that some terms of $T_1$, $T_2$ and $T_3$ do not
contribute because their order is less or equal than $j+3$ and are
compensated by the choice of the $K$'s and $Y$'s.
\end{proof}

\section{Examples}\label{sectionexamples}
In this section we provide examples showing that hypotheses H1, H2
and H3 are necessary to the existence of the invariant manifolds. We
also show that the manifolds may be much less regular than the map.

\subsection{A toy model}
Here we construct an example of a map without stable invariant
manifold but satisfying both H1 and H2.

Let $\varphi$ be the flow of the equations in $\RR^2 \times \RR$
\begin{equation*}
\dot{x}_1 = -x_1^2, \qquad \dot{x}_2 = - a x_1 x_2, \qquad
\dot{y}=bx_1 y + x_2^3,
\end{equation*}
being $a,b>0$, and $F(x,y) = \varphi(1;x,y)$, with $x=(x_1,x_2)$,
its time $1$ map.

\begin{claim}
There exist $V\subset \RR^2$, star-shaped with respect to the
origin, such that $F$ satisfies hypotheses H1 and H2 in $V$.

If $b+3a\leq 1$, $F$ has no invariant stable manifold over $V$ of
the origin.
\end{claim}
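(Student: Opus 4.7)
The plan is to exploit the explicit integrability of the equations. One first solves the $x$-subsystem in closed form: for $x_1^0>0$,
\[
x_1(t) = \frac{x_1^0}{1+tx_1^0},\qquad x_2(t) = x_2^0(1+tx_1^0)^{-a},
\]
and then integrates the linear inhomogeneous equation for $y$ by variation of parameters, obtaining
\[
y(t) = (1+tx_1^0)^b\Bigl[y^0+(x_2^0)^3\int_0^t(1+sx_1^0)^{-b-3a}\,ds\Bigr].
\]
Evaluating at $t=1$ and Taylor expanding at the origin displays $F$ in the form~\eqref{defF} with $N=M=2$, leading homogeneous parts $p(x)=(-x_1^2,-ax_1x_2)$ and $q(x,y)=bx_1y$, plus higher order remainders.

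For the first assertion, I would take $V$ to be any open cone
\[
V = \{(x_1,x_2)\in\RR^2 : x_1>\gamma\|x\|\},\qquad \gamma\in(0,1),
\]
with the Euclidean norm, which is clearly star-shaped with respect to the origin. A direct expansion of $\|x+p(x,0)\|-\|x\|$ shows that, for $\r$ small enough,
\[
\ap = \inf_{x\in\Vr}\frac{x_1(x_1^2+ax_2^2)}{\|x\|^3}+o(1)\geq \gamma\min(a,1)+o(1)>0,
\]
which is H1. Moreover $q(x,0)=0$ and, since $m=1$, $D_y q(x,0)=bx_1>0$ on $\Vr$, so
\[
\Bq = \inf_{x\in\Vr}\frac{bx_1}{\|x\|}\geq b\gamma>0,
\]
and the condition $\Bq>-N\ap$ of H2 (for $M=N$) is automatic.

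For the non-existence claim, I would argue by contradiction: assume there is a parametrization $K:\Vr\to U$ of the stable invariant manifold with $K(0)=0$ and $DK(0)=(\mathrm{Id},0)^\top$. Since $DK_x(0)=\mathrm{Id}$, $K_x$ is a local diffeomorphism at $0$, so for every sufficiently small $x\in V$ we have $K_x(x)\in V_\r$, hence $K(x)\in W^{\rm s}_{V,\r}$; in particular its forward orbit under $F$ converges to the origin. Choosing such an $x$ with $x_2\neq 0$ and setting $(x^0,y^0)=K(x)$ (so that $x_2^0=x_2+O(\|x\|^2)\neq 0$), one needs $y(k)\to 0$ as $k\to\infty$. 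I would show this fails for every $y^0$: under $b+3a\leq 1$ the integral $\int_0^t(1+sx_1^0)^{-b-3a}\,ds$ is strictly monotone in $t$ and diverges to $+\infty$ (algebraically if $b+3a<1$, logarithmically if $b+3a=1$), so the bracket $y^0+(x_2^0)^3\int_0^t(\cdot)\,ds$ diverges to $\pm\infty$ with the sign of $(x_2^0)^3$ irrespective of $y^0$; combined with $(1+tx_1^0)^b\to\infty$ (since $b>0$), this gives $|y(t)|\to\infty$, contradicting $(x^0,y^0)\in W^{\rm s}_{V,\r}$.

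The main obstacle is the uniformity in $y^0$ of the asymptotic analysis: one must verify that no choice of $y^0$, however cleverly tuned to $(x^0,x_2^0)$, can force the bracket to converge to $0$. This reduces to the monotonicity and divergence of the integral, which is common to both sub-cases $b+3a<1$ and $b+3a=1$; everything else (the explicit flow, the identification of $p$ and $q$, the verification of H1 and H2 on the cone $V$) is routine bookkeeping.
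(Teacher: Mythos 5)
Your proposal is correct and follows essentially the same route as the paper: identify $p(x,y)=(-x_1^2,-ax_1x_2)$, $q(x,y)=bx_1y$ from the explicit flow, check H1 and H2 on a cone around the positive $x_1$-axis (the paper uses the wedge $|x_2|<(1-a)x_1$ with the sup norm, you use a Euclidean cone — an immaterial difference), and then rule out the manifold via the variation-of-parameters formula $F_y^n(x,y)=(1+nx_1)^b\bigl[y+x_2^3\int_0^n(1+sx_1)^{-b-3a}\,ds\bigr]$ together with the divergence of the integral when $b+3a\le 1$. The only cosmetic difference is that the paper argues directly that no point of the stable set has $x_2\neq 0$ (so no two-dimensional manifold over $V$ can exist), whereas you phrase the contradiction through a normalized parametrization $K$; also note that $K(x)\in W^{\rm s}_{V,\r}$ should be justified by the definition of the stable manifold (all forward iterates project into $V$ and converge), not merely by $K_x(x)\in V_\r$.
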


The map $F$  has the form~\eqref{defF} with $p(x,y) = (-x_1^2,-a x_1
x_2)$ and $q(x,y) = b x_1 y$.

We introduce
\begin{equation*}
W = \left \{ x=(x_1, x_2) \in \RR^2 :  |x_2| <
(1-a)x_1<\frac{2}{a+1} \right \}.
\end{equation*}
First we note that the map~$F$ satisfies hypotheses H1 and H2 with
the supremum norm in any open set $V$ contained in $W$. Of course
the constants $\Ap, \Bq$ will depend on $V$. However we claim that
there is no invariant set for $F_x$ contained in $W$. As a
consequence, hypothesis H3 can not be satisfied. Indeed, assume that
$x^0 =(x_1,x_2) \in W$, and consider
\begin{equation*}
x^n = F_x(x^{n-1}) = F_x^n(x^0) = \left (\frac{x_1}{1 +
nx_1},  \frac{x_2}{\big (1 + nx_1\big )^{a}}\right ) .
\end{equation*}
The sequence $x^n \in W$ if and only if $x_1 \geq |x_2| \big (1 + n
x_1 \big )^{1-a}$, $\forall n\geq 0$, which is not true since
$x_1>0$ and $a<1$.

Now we check that the map $F$ has no stable invariant manifold.
Indeed, if such a manifold exists, then, for any $(x,y)$ belonging
to it, $F_y^n (x,y) \to 0$ as $n\to \infty$. Since
$$
\varphi_y( t, x,y) = \big (1+ t x_1\big)^b \left [y +
x_2^3\int_{0}^{ t} \frac{1}{\big (1+  s x_1\big )^{b+3a}} \, ds
\right ],
$$
we deduce that
\begin{equation*}
F_y^n(x,y) = \big (1+n x_1\big)^b \left [y + x_2^3\int_{0}^n
\frac{1}{\big (1+ s x_1\big )^{b+3a}} \, ds \right ].
\end{equation*}
Therefore, since $(1+n x_1)^b\to \infty$ as $n\to \infty$ a
necessary condition for $F^n_y(x,y) \to 0$ as $n\to \infty$, is
that
\begin{equation*}
y= x_2^3\int_{\infty}^0\frac{1}{\big (1+ sx_1\big )^{b+3a}}\, ds,
\end{equation*}
and the claim follows because the above integral is not convergent
when $b+3a\leq 1$.

\subsection{The loss of differentiability}

The following example shows that the invariant manifolds of a
parabolic fixed point may be of finite order of differentiability.
This maximum order of differentiability is attained when the
manifold is written (locally) as a graph, since  if the invariant
manifold possesses a parametrization of the form given by
Theorem~\ref{maintheorem} with some regularity, by performing a
close to the identity change of variables, its representation as a
graph will be also of the same regularity.

Let $a,b >0$. Let $F$ be the the time $1$ map of
\begin{equation*}
\dot{x} =  p ( x), \qquad \dot{y} =  q_1( x) y + g( x),
\end{equation*}
where  $ x=( x_1, x_2) \in \RR^2$, $y \in \RR$, $p$ is such that the
equation $\dot x = p(x)$ in polar coordinates $(x_1,x_2) = (r \cos
\theta, r \sin \theta)$ becomes
\begin{equation}\label{exH4polar}
\dot{r} = -a r^5,\qquad \dot{\theta} = r^4 \sin 4 \theta
\end{equation}
($p$ is a homogeneous polynomial of degree~$5$) and
$$
q_1(x)= b (x_1^2 + x_2^2)^2 ,\;\;\; g(x) = 4 (x_1^2 + x_2^2) x_1 x_2
(x_1^2 -x_2^2).
$$

\begin{claim}
Let $\nu \in (0,1)$. There exists $a_0>0$ such that, for any
$a>a_0$, the map $F$ satisfies
hypotheses H1, H2 and H3 in $V_{\r}$, for some $\r>0$, where
\begin{equation*}
V=\{  x\in \RR^2 :  \nu| x_1|\leq  x_2\}.
\end{equation*}

Furthermore, for any $m,n \in\NN$ satisfying $n>a_0$ and $2m>n+1$,
the stable manifold over~$V$ of the origin with $a = 2n$ and
$b=2m-n-1$ is only $2m-2\ge 1$ times differentiable.
\end{claim}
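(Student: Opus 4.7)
The plan is two-fold: first verify H1--H3 using the polar expression of the vector field, then construct the stable manifold explicitly as a graph $y=\phi(x)$ via the flow and read off its regularity.

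For the first part, the homogeneous polynomial $p$ in~\eqref{defF} is exactly the time derivative of $x$, and the degree-$5$ term in the $y$-equation is $q(x,y) = b(x_1^2+x_2^2)^2 y$, so $q(x,0) \equiv 0$. From $\dot r = -ar^5$ one obtains $|x+p(x)|^2 = r^2 - 2ar^6 + O(r^{10})$, hence $\ap = a + O(\r^4)$. With $N=M=5$ and $\Bq = b > 0 > -N\ap$, H1 and H2 are immediate. For H3, the angular component $r^4\sin 4\theta$ of $p$ pushes the boundary rays $\theta = \pm\arctan\nu$ strictly into $V$ because $0 < 4\arctan\nu < \pi$ (as $\nu<1$), and the transverse displacement is of order $r^5 \sin 4\arctan\nu$, yielding $\CIn>0$. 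Choosing $a_0$ large enough so that, in addition, $\Ap>0$ (read off from the polar Jacobian $Dp = r^4 M(\theta)$, diagonalized on the $x_2$-axis) and the quantities in the regularity estimate below are all meaningful, H1--H3 hold for every $a>a_0$.

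For the regularity, since $F$ is the time-$1$ map of an autonomous flow, its stable set coincides with that of the flow. The $y$-equation being linear in $y$ forces the manifold to be a graph $y=\phi(x)$ satisfying $\nabla\phi\cdot p = b r^4 \phi + r^6 \sin 4\theta$; degree counting makes $\phi$ homogeneous of degree~$2$, so the ansatz $\phi = r^2 \psi(\theta)$ gives the linear ODE
\begin{equation*}
\psi'(\theta)\sin 4\theta - (2a+b)\psi(\theta) = \sin 4\theta.
\end{equation*}
Its unique solution bounded as $\theta \to \pi/4^{\pm}$, written in the variable $\xi = \tan 2\theta$ with $\alpha = (2a+b)/4$, is
\begin{equation*}
\psi(\theta) = -\tfrac{1}{2}\,\mathrm{sign}(\xi)\,|\xi|^{\alpha}\int_{|\xi|}^{\infty}\frac{v^{-\alpha}}{1+v^{2}}\,dv.
\end{equation*}

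To extract the regularity, I would expand this integral near $\xi = 0$, which corresponds to the $x_2$-axis, the only part of the interior of~$V$ where $\tan 2\theta$ vanishes. Iterating $1/(1+v^2) = 1 - v^2/(1+v^2)$ a total of $m-1$ times produces
\begin{equation*}
\psi(\theta) \;=\; \sum_{k=0}^{m-2} a_k\, \xi^{2k+1} \;-\; C_\alpha\,\mathrm{sign}(\xi)\,|\xi|^{\alpha} \;+\; O(\xi^{2m-1}),
\end{equation*}
with $C_\alpha\neq 0$ (this non-vanishing is the main technical obstacle). Near the $x_2$-axis $\xi = -2x_1/x_2 + O(x_1^3)$, so $r^2\xi^{2k+1}$ together with the remainder combines into smooth functions on a neighborhood of any $(0, x_2^*) \in V$ with $x_2^*>0$, while the singular term contributes $-C_\alpha\,\mathrm{sign}(x_1)\,|x_1|^{\alpha}\, x_2^{2-\alpha}$, which is $C^{\lfloor\alpha\rfloor}$ but not $C^{\lfloor\alpha\rfloor+1}$ there. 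For $a = 2n$ and $b = 2m-n-1$ we have $\alpha = (3n+2m-1)/4$; the constraint $2m>n+1$ forces $n\le 2m-2$, and $\lfloor\alpha\rfloor$ attains its maximum value $2m-2$ exactly when $n = 2m-2$, yielding the claimed sharp $C^{2m-2}$ regularity. The additional subtle point is that the polynomial-in-$\xi$ pieces are, as functions of $(x_1,x_2)$, rational with formal poles on the diagonals $x_1 = \pm x_2$; to see that they nevertheless combine into globally smooth functions on $V$, one works directly with the integral representation of $\psi$ on compact subsets of $V$ bounded away from the origin rather than with the termwise expansion.
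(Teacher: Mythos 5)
Your setup is in substance the paper's: the paper also writes the manifold as the graph $y=h(x)$ forced by the linear $y$-equation, $h(x)=\int_{\infty}^0 M^{-1}(t,x)\,g(\varphi(t,x))\,dt$, and then makes it explicit; you get the same object by exploiting homogeneity of degree two and solving the angular ODE, and indeed your $\psi$ coincides with the paper's $h/(x_1^2+x_2^2)$ after the substitution $\xi=\tan 2\theta=1/c_x$ (the paper instead evaluates the time integral directly and changes variables to reach $-\frac{x_1^2+x_2^2}{4c_x^{2m-1}}\int_0^{c_x^2}\frac{w^{m-1}}{w+1}\,dw$). The H1--H3 discussion is at the same level of detail as the paper's and is fine.

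The regularity analysis, however, has a genuine gap. First, the termwise expansion is only legitimate when $\alpha>2m-3$: for $2k\ge\alpha-1$ the integrals $\int_{|\xi|}^{\infty}v^{2k-\alpha}\,dv$ diverge, and with your $\alpha=(2m+3n-1)/4$ the condition $\alpha>2m-3$ forces $n\ge 2m-3$, which is not implied by $n>a_0$, $2m>n+1$. Second, even where it applies, your conclusion is sharp regularity $\lfloor\alpha\rfloor$, a quantity depending on $n$ and equal to the claimed $2m-2$ only in the single case $n=2m-2$; the claim asserts exactly $2m-2$ for \emph{every} admissible $(m,n)$, so the proposal does not prove the statement (the final sentence ``yielding the claimed sharp $C^{2m-2}$ regularity'' covers only one extremal choice of $n$).

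The missing idea is the resonance that the example is built on. In the paper's proof the constants are tuned so that the exponent of the homogeneous solution is the odd integer $2m-1$, equivalently the reduced integrand is the integer power $w^{m-1}$; then the identity $\frac{w^{m-1}}{w+1}=\sum_{j=2}^m(-1)^jw^{m-j}+\frac{(-1)^{m+1}}{w+1}$ produces, near the $x_2$-axis, the term $x_1^{2m-1}\log(x_1^{-2}+1)$, which is $C^{2m-2}$ and not $C^{2m-1}$ \emph{independently of $n$}. In that resonant case your claimed obstruction $C_\alpha\,\mathrm{sign}(\xi)|\xi|^{\alpha}$ is no obstruction at all ($\mathrm{sign}(\xi)|\xi|^{2m-1}=\xi^{2m-1}$ is a polynomial) and your constant $C_\alpha$ is defined by a divergent integral, so your expansion breaks down precisely where the claim lives; the true singular term is $|\xi|^{2m-1}\log|\xi|$. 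Your formula $\alpha=(2a+b)/4$ is correct (it agrees with a careful re-derivation of $h$), so the resonant case corresponds to $2a+b=4(2m-1)$; the paper's two displayed exponents and its stated $b=2m-n-1$ are mutually consistent only up to a constant-tracking slip (a factor $4$ in $b$), which is worth flagging, but a correct proof of the intended statement must identify the logarithmic mechanism rather than a pure power of non-integer order, and as written your proposal never reaches it.
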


\begin{proof} Let $\varphi( t, x)$ be the flow of
$\dot{x}= p( x)$ and $M( t, x)$ the solution of $\dot{M} =
q_1(\varphi( t, x)) M$ such that $M(0, x)=1$. The stable manifold (if
it exists) has to be the graph of $y=h( x)$ with
\begin{equation}\label{invmanifoldexemple}
h( x) = \int_{\infty}^0 M^{-1}( t, x) g(\varphi( t, x)) \,d t.
\end{equation}

We note that, for any value of $a>0$, the map $ x\mapsto  p( x)$ has
exactly five invariant lines in the set $\{x_2 \geq 0\}$
corresponding to the values of $\theta=0,\pi/4,\pi/2,3\pi/4,\pi$.

It is straightforward to check that, taking $a>0$ big enough, there
exists $\r>0$ small enough and a norm in $\RR^2$ such that $ p$
satisfies $H1, H3$ in $V_{\r}$ with the usual Euclidean norm $\Vert
v \Vert = \sqrt{v_1^2 + v_2^2}$.

Moreover, a simple computation shows that $0<\Ap<\bp=a$. We recall
that these constants were defined in~\eqref{defconstants}.

Using polar coordinates $(r,\theta)$ in the $(x_1,x_2)$-plane, $q_1$
and $g$ have the simpler expressions
$$
q_1(r) = br^4,\qquad g(r,\theta) = r^6 \sin 4\theta.
$$
In what follows we will write with the same letter a function $f(x)$
and its expression in polar coordinates $f(r,\theta)=f(r\cos \theta,
r\sin\theta)$.

The stable manifold over~$V$ of the origin (which exists and it is
$\CC^1$) is the graph of $y=h( x)$ with $h$ given in
\eqref{invmanifoldexemple}. Let $\varphi(t;r,\theta)$ be the flow
associated to \eqref{exH4polar} in polar coordinates. We denote
$\varphi_r$ and $\varphi_{\theta}$ the first and second component
respectively of $\varphi$ when written in polar coordinates. Then
$$
h(x) = \int_{\infty}^0 [M_y(t,x)]^{-1} \big
[\varphi_r(t;r,\theta)\big ]^6 \sin (4 \varphi_{\theta}(t;r,\theta))
\, dt
$$
with $M_y$ the solution of the linear system $\dot{M}_y = b
(\varphi_r (t;r,\theta))^4 M_y$ such that $M_y(0,r,\theta)=1$.

We first note that, if $\theta=\pi/4,\pi/2, 3\pi/4$ (that is, $x$
belongs to an invariant line), then
$\varphi_{\theta}(t;r,\theta)=\theta$ and consequently $\sin
(4\varphi_{\theta}(t;r,\theta))\equiv 0$. This implies that the
stable manifold evaluated at points with argument
$\theta=\pi/4,\pi/2,3\pi/4$ is $h(x) \equiv 0$. If the argument of
$x$, $\theta \neq \pi/4,\pi/2, 3\pi/4$,
$$
h(x) = 4 c_{\theta} \int_{\infty}^0 \frac{1}{\big (1+ 4a t
r^4)^{\frac{b}{4a} + \frac{6}{4} -\frac{1}{a} }\cdot \big
[c_{\theta}^2 + \big (1+ 4a t r^4)^{\frac{2}{a}}\big ]}\, dt,
$$
where
\[
c_\theta= \frac{1+ \cos(4\theta)}{\sin (4\theta)} =
\frac{x_1^2 -x_2^2}{2x_1 x_2} = c_{x}
\]
and $(r,\theta)$ are the polar coordinates of $x=(x_1,x_2)$.
In particular, if $\theta=\pi/4,3\pi/4$, then $c_{x}\equiv 0$ and
hence the above expression for $h$ is also valid in these invariant
lines.

We perform the change of variables $(1+4atr^4)^{2/a}= c_x^2/w$ and
we obtain that
$$
h(x) = -\frac{x_1^2 + x_2^2}{4  |c_{x}|^{a\big
(\frac{b}{4a} + \frac{6}{4} -\frac{1}{a}\big)}} \int_{0}^{c_{x}^2}
\frac{w^{\frac{a}{2} \big (\frac{b}{4a} + \frac{6}{4}
+\frac{1}{a}-1\big) }}{w(w+1)}\,dw.
$$

Now we take $m,n$ as in the claim and choose $a,b$ accordingly.
Then
$$
h(x) = -\frac{x_1^2 + x_2^2}{4  c_x^{2m-1}}
\int_{0}^{c_{x}^2} \frac{w^{m-1}}{w+1} \, dw.
$$
Using the elementary identity
$$
\frac{w^{m-1}}{w+1}= \sum_{j=2}^{m} (-1)^j w^{m-j} +
\frac{(-1)^{m+1}}{w+1},
$$
we obtain
$$
h(x) = -\frac{x_1^2 + x_2^2}{4 c_x^{2m-1}} \left
(\sum_{j=2}^m (-1)^{j} \frac{c_x^{2(m-j+1)}}{m-j+1} + (-1)^{m+1}
\log(c_x^2 +1)\right ).
$$
Now we are going to look for the differentiability of $h$ at points
of the form $(0,x_2)$, $x_2\neq 0$.  To determine the regularity
with respect to $x_1$, we only need to study the auxiliary function
$$
\tilde{h}(x) = x_1^{2m-1} \left (\sum_{j=2}^m (-1)^{j}
\frac{x_1^{-2(m-j+1)}}{m-j+1} + (-1)^{m+1} \log\left
(\frac{1}{x_1^{2}} +1\right)\right ).
$$
This function is only $2m-2\ge 1$ times differentiable.
\end{proof}

\section{Decomposition of $\Vr$}\label{decomposition}
In this section we describe a decomposition of the set $\Vr$
associated to a map of the form $\Ra( x) = x+ p(
x,0)+\OO(\|x\|^{N+1})$. Moreover we will obtain a quantitative
estimate of the rate of convergence of $\Vert \Ra^k( x) \Vert$ to $0$ as $k \to
\infty$.

We introduce the constant
\begin{equation*}
\a=\frac{1}{N-1}.
\end{equation*}
For a given $\r>0$, let $u>0$ and $a_0>0$ be such that $a_0
u^{-\a}=\r$. Consider two sequences $a_k\in \RR$, $k\ge 0$ and
$b_k\in \RR$, $k\ge 1$, such that
\begin{equation}\label{akbkcond}
\frac{b_{k+1}}{(u+k+1)^{\a}} < \frac{a_{k}}{(u+k)^{\a}}, \qquad k\ge
0.
\end{equation}
We introduce the sets
\begin{equation}\label{defVk}
V_{k}= \left \{  x\in \Vr \;\;:\;\; \Vert  x\Vert \in I_{k}:=\left [
\frac{b_{k+1}}{(u+k+1)^{\a}} , \frac{a_{k}}{(u+k)^{\a}}\right
]\right \}.
\end{equation}
\begin{lemma}\label{Rlemma}
Let $p$ be the homogeneous polynomial defined in~\eqref{defF}. Let
$\Ra:\Vr \to \RR^n$ be a continuous map such that $\Ra( x) -  x- p(
x,0) =\OO(\Vert x \Vert^{N+1})$.

Assume that $ p$ satisfies H1 and H3 and let $\ap\leq \bp$ be the
constants defined in \eqref{defconstants}.

Then for any $\ka<\ap$ and $\kb>\bp$, there exists $\r$ small enough
such that
\begin{enumerate}
\item[(1)] \label{firstitem:Rlemma} if $ x\in \Vr$,
\begin{equation*}
\Vert \Ra( x) - x\Vert \leq \kb \Vert  x\Vert^N,\qquad  \Vert  \Ra (
x) \Vert \leq \Vert  x\Vert \big ( 1-\ka \Vert  x\Vert^{N-1}\big ).
\end{equation*}
\item[(2)] \label{seconditem:Rlema} Let $a_0,b_0,u>0$ be such that $a_{0}^{N-1} = \a \ka^{-1}$, $b_{0}^{N-1} = \a \kb^{-1}$ and $a_0 u^{-\a}=\r$.
There exist two sequences $a_{k},b_{k}\in \RR$,  satisfying
\eqref{akbkcond}, such that $a_{k} = a_0\big (1+\OO(k^{-\beta})\big
)$, $b_k=b_0\big (1+\OO(k^{-\beta})\big )$ for some $\beta >0$.
Moreover
\begin{equation}\label{decompositionV}
\overline{\Vr}\backslash \{0\} = \cup_{k=0}^{\infty} V_k \;\;\;
\text{ and } \;\;\; \Ra(V_k) \subset V_{k+1}.
\end{equation}
Consequently, if $x\in V_k$, then one has that
$$
\frac{\a}{b(u+k+1+j)}\big (1+ \OO(k^{-\beta})\big ) \leq \Vert
\Ra^j(x) \Vert^{N-1} \leq \frac{\a}{a(u+k+j)} \big (1+
\OO(k^{-\beta}) \big ).
$$
\end{enumerate}
\end{lemma}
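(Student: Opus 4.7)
The claim has two pieces. Part~(1) is a direct consequence of the definitions of $\ap$ and $\bp$ in~\eqref{defconstants}, together with the fact that the nonlinear correction to $x + p(x,0)$ is of order $\Vert x\Vert^{N+1}$ and can be absorbed by taking $\r$ small. Writing $\Ra(x) = x + p(x,0) + \Delta(x)$ with $\Vert \Delta(x)\Vert \leq C\Vert x\Vert^{N+1}$ on $\Vr$, the triangle inequality gives $\Vert \Ra(x) - x\Vert \leq (\bp + C\Vert x\Vert)\Vert x\Vert^{N}$ and $\Vert \Ra(x)\Vert \leq (1 - (\ap - C\Vert x\Vert)\Vert x\Vert^{N-1})\Vert x\Vert$; choosing $\r$ small enough that $C\r < \min\{\kb - \bp,\, \ap - \ka\}$ yields both estimates of Part~(1).

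For Part~(2) the motivation comes from the continuous-time analogue $\dot r = -\ka r^N$, whose solutions satisfy $r(t)^{N-1} = \a/(\ka(u+t))$. This suggests nesting the shells $V_k$ around the level curves $\Vert x\Vert \approx a_0/(u+k)^{\a}$, with $a_0^{N-1} = \a/\ka$, and expecting $\Ra$ to advance one shell per iteration. To obtain some room to maneuver I first apply Part~(1) with tightened constants $a' \in (\ka,\ap)$ and $b' \in (\bp,\kb)$ (possibly shrinking $\r$ further). The invariance $\Ra(V_k) \subset V_{k+1}$ is then a consequence of a pair of recursive inequalities: the upper-boundary bound $\Vert \Ra(x)\Vert^{N-1} \leq \Vert x\Vert^{N-1}(1 - a' \Vert x\Vert^{N-1})^{N-1}$ (monotone in $\Vert x\Vert^{N-1}$ on the relevant range) requires
\[
a_{k+1}^{N-1} \geq a_k^{N-1}\,\frac{u+k+1}{u+k}\left(1 - \frac{a' a_k^{N-1}}{u+k}\right)^{N-1},
\]
and, using that $s \mapsto s(1-b's)^{N-1}$ is also increasing for small $s$, the lower-boundary bound yields the dual inequality controlling $b_{k+2}$ in terms of $b_{k+1}$. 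The key Taylor expansion, based on $\ka a_0^{N-1} = \a = 1/(N-1)$, gives
\[
\frac{u+k+1}{u+k}\Bigl(1 - \frac{a' a_0^{N-1}}{u+k}\Bigr)^{N-1} = 1 - \frac{a' - \ka}{\ka(u+k)} + \OO\Bigl(\frac{1}{(u+k)^2}\Bigr),
\]
which is strictly less than~$1$ for large $k$. Thus I may take $a_k = a_0$ for $k \geq k_0$ (and analogously $b_k = b_0$ via the $b',\kb$ computation), adjusting the finitely many earlier terms to fit the boundary condition; this produces sequences satisfying~\eqref{akbkcond} with $a_k = a_0(1 + \OO(k^{-\beta}))$ and $b_k = b_0(1 + \OO(k^{-\beta}))$ for some $\beta>0$.

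Once $\Ra(V_k) \subset V_{k+1}$ is established, the covering $\overline{\Vr} \setminus \{0\} = \bigcup_{k \geq 0} V_k$ follows because consecutive intervals $I_k$ overlap (since $\ka < \kb$ forces $a_0 > b_0$) and $a_k/(u+k)^{\a} \to 0$. The quantitative bound on $\Vert \Ra^j(x)\Vert^{N-1}$ comes by induction from $\Ra^j(V_k) \subset V_{k+j}$: one just reads off the endpoints of $I_{k+j}$ and inserts the asymptotics of the sequences. The main technical obstacle is verifying the cancellation of the leading-order $1/(u+k)$ terms in the Taylor expansion displayed above, which is what allows $a_k, b_k$ to track the constants $a_0, b_0$ rather than drifting with $k$; the positive slack $a' - \ka$ and $\kb - b'$ absorbs the residual $1/(u+k)^2$ error.
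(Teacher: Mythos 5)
Your proof is correct, but for part (2) it follows a genuinely different route from the paper. The paper squeezes $\Vert \Ra(x)\Vert$ between the one-dimensional comparison maps $\Ra_{\ka}(v)=v-\ka v^N$ and $\Ra_{\kb}(v)=v-\kb v^N$, \emph{defines} $a_k,b_k$ by the exact recurrences $\bar a_{k+1}=\Ra_{\ka}(\bar a_k)$, $\bar b_{k+1}=\Ra_{\kb}(\bar b_k)$, and then obtains the asymptotics $a_k=a_0+\OO(k^{-\beta})$ by invoking the Fatou-coordinate result of~\cite{BFdLM2007}: analytic maps $\varphi_{\ka},\varphi_{\kb}(w)=a_0 w^{-\a}+\OO(w^{-\a-\beta})$ conjugating $\Ra_{\ka},\Ra_{\kb}$ to the translation $w\mapsto w+1$, so that the orbits are tracked exactly. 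You instead take the sequences (essentially) constant, $a_k\equiv a_0$, $b_k\equiv b_0$, and verify the shell-invariance inequalities directly by a Taylor expansion, using the strict slack $\ka<a'<\ap$, $\bp<b'<\kb$ to dominate the $\OO((u+k)^{-2})$ error by the first-order term $\pm(a'-\ka)/(\ka(u+k))$ (resp.\ $(\kb-b')/(\kb(u+k+1))$); the cancellation at order $(u+k)^{-1}$ indeed hinges on the normalization $\ka a_0^{N-1}=\a$, and your computation of it is right. Your version is more elementary and self-contained (no appeal to the conjugation result), and it even makes the $\OO(k^{-\beta})$ corrections vacuous; note that since $a_0u^{-\a}=\r$ forces $u\to\infty$ as $\r\to 0$, your inequalities hold for \emph{all} $k\ge 0$ once $\r$ is small, so the step ``adjust finitely many early terms'' can be dispensed with (and is the one place where your write-up is vague, since any adjustment must preserve~\eqref{akbkcond} and the endpoint $\bar a_0=\r$). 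The paper's approach buys the canonical comparison sequences and reuses machinery from~\cite{BFdLM2007} that appears elsewhere in the paper. Two small points common to both arguments and worth a line in a full write-up: the inclusion $\Ra(V_k)\subset V_{k+1}$ also needs $\Ra(x)\in V$, which is where H3 (with $\r$ small to absorb the $\OO(\Vert x\Vert^{N+1})$ remainder) enters; and the monotonicity of $v\mapsto v(1-a'v^{N-1})$ on $[0,\r]$, which you use to evaluate the bounds at the shell endpoints, should be recorded explicitly.
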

\begin{proof}
The proof of item~(1) is straightforward from the definitions of
$\ap$ and $\bp$.

Now we check (2). We define the auxiliary functions of real
variable, $\Ra_{\ka} (v) = v-\ka v^{N}$ and $\Ra_{\kb}(v)=v-\kb
v^{N}$. We first observe that, if $\r$ is small enough,
\begin{equation*}
\Ra_{\kb}(\Vert  x\Vert ) \leq \Vert \Ra( x) \Vert \leq
\Ra_{\ka}(\Vert  x\Vert ).
\end{equation*}
Indeed, the right hand side inequality follows from the definition
of $\ka$ and the left hand side inequality is a straightforward
consequence of the definition of $\kb$ and the triangular inequality
$\Vert \Ra ( x) \Vert \geq \Vert  x\Vert - \Vert \Ra( x) -  x\Vert$.

For $k\geq 0$ we define the sequences $a_{k}, b_{k}$ by the recurrences
\[
\frac{a_{k+1}}{(u+k+1)^{\a}}  = \Ra_{\ka} \left
(\frac{a_{k}}{(u+k)^{\a}}\right ), \qquad
\frac{b_{k+1}}{(u+k+1)^{\a}} = \Ra_{\kb} \left
(\frac{b_{k}}{(u+k)^{\a}}\right ), \qquad k\ge 0,
\]
and also $\bar{a}_{k}, \bar{b}_{k}$ by
\[
 \bar{a}_{k}=\frac{a_{k}}{(u+k)^{\a}}, \qquad
\bar{b}_{k}=\frac{b_{k}}{(u+k)^{\a}}, \qquad k\ge 0.
\]

We have that $a<b$. We choose $\r$ small enough such that  both
$\Ra_{\ka}$ and $\Ra_{\kb}$ are monotonically increasing functions
in $[0,\r]$ and $0< \Ra_{\kb}(v) < \Ra_{\ka}(v) < v$,  for $v\in
(0,\r]$. From the choice of $a_0, b_0$ we have $\bar{b}_0
<\bar{a}_0$ and $\bar{a}_0 = \r$. We easily check by induction
\begin{equation*}
0<\bar{b}_{k}<  \bar{a}_{k},\qquad \bar{a}_{k+1}<\bar{a}_k,\qquad
\bar{b}_{k+1}<\bar{b}_{k}\qquad \text{and} \qquad \lim_{k\to
\infty} (\bar{a}_k^2 +\bar{b}_k^2) =0.
\end{equation*}
As an immediate consequence, the sets $V_k$ in \eqref{defVk} are
well defined for this choice of sequences $b_k$ and $a_{k}$ and, in
addition, equality~\eqref{decompositionV} holds. Moreover, we
note that if $u\in I_{l}=[\bar{b}_{l+1},\bar{a}_{l}]$, then, by the
definition of the sequences $\bar{a}_k, \bar{b}_k$, since
$\Ra_{\ka}$ and $\Ra_{\kb}$ are increasing functions in $[0,\r]$ and
$\Ra_{\kb}(v) \leq \Ra_{\ka}(v)$,
\begin{align*}
\Ra_{\ka}(v) &\in [\Ra_{\ka}(\bar{b}_{l+1}),\Ra_{\ka}(\bar{a}_{l})]
\subset [\Ra_{\kb}(\bar{b}_{l+1}), \Ra_{\ka}(\bar{a}_{l})]
 =[\bar{b}_{l+2},\bar{a}_{l+1}]=I_{l+1},\\
 \Ra_{\kb}(v) &\in [\Ra_{\kb}(\bar{b}_{l+1}),\Ra_{\kb}(\bar{a}_{l})] \subset [\Ra_{\kb}(\bar{b}_{l+1}), \Ra_{\ka}(\bar{a}_{l})]
 =[\bar{b}_{l+2},\bar{a}_{l+1}]=I_{l+1}.
\end{align*}
Therefore, if $ x\in V_{l}$ (which is equivalent to $\Vert  x\Vert
\in I_{l}$), then $\Ra( x)\in I_{l+1}$ since $ R_{\kb}(\Vert  x\Vert
)\leq \Vert \Ra( x) \Vert \leq \Ra_{\ka}(\Vert  x\Vert)$.

In \cite{BFdLM2007} it was proven that there exist two analytic
function $\varphi_{\ka}, \varphi_{\kb}$ of the form
\begin{equation}\label{formvarphiAB}
\varphi_{\ka}(w)= \frac{a_{0}}{w^{\a}} + \OO\left
(\frac{1}{w^{\a+\beta}}\right ),\qquad \varphi_{\kb}(w)=
\frac{b_{0}}{w^{\a}} + \OO\left (\frac{1}{w^{\a+\beta}}\right )
\end{equation}
with $\beta>0$ which conjugate both $\Ra_{\ka}$ and $\Ra_{\kb}$ to
$w\mapsto w+1$, namely
\begin{equation}\label{conjugation}
\Ra_{\ka}(\varphi_{\ka}(w)) = \varphi_{\ka}(w+1),\qquad
\Ra_{\kb}(\varphi_{\kb}(w)) = \varphi_{\kb}(w+1).
\end{equation}
Let $w_k^{\ka}, w_{k}^{\kb}$ be such that $\varphi_{\ka}(w_k^{\ka})
(u+k)^{\a}=a_{k}$ and  $\varphi_{\kb}(w_k^{\kb})(u+k)^{\a}=b_{k}$.
We observe that, by definition of $a_{k},b_{k}$ and
\eqref{conjugation}
\begin{equation*}
\varphi_{\ka}(w_{k}^{\ka}) = \frac{a_{k}}{(u+k)^{\a}} =
\Ra_{\ka}\left (\frac{a_{k-1}}{(u+k-1)^{\a}}\right ) =
\Ra_{\ka}(\varphi_{\ka}(w_{k-1}^{\ka}))
=\varphi_{\ka}(w_{k-1}^{\ka}+1)
\end{equation*}
which implies (by the injective property of $\varphi_{\ka}$) that
$w_{k}^{\ka} = w_{k-1}^{\ka} + 1 = w_{0}^{\ka} + k$. Analogously one
can see that $w_{k}^{\kb}=w_0^{\kb} + k$. Now we notice that, by the
form \eqref{formvarphiAB} of $\varphi_{\ka},\varphi_{\kb}$, one has
that
\begin{equation*}
w_{0}^{\ka} = u + \OO(u^{1-\beta}),\qquad w_{0}^{\kb} = u +
\OO(u^{1-\beta}).
\end{equation*}
Therefore,
\begin{align*}
a_{k} &= (u+k)^{\a} \varphi_{\ka}(w_{k}^{\ka}) = (u+k)^{\a}
\varphi_{\ka}(w_0^{\ka} + k) \\ &= \frac{a_0(u+k)^{\a}}{\big
[u+k+\OO(u^{1-\beta})\big ]^{\a}} +
\OO\left (\frac{(u+k)^{\a} }{\big [u+k+\OO(u^{1-\beta})\big ]^{\a+\beta}}\right )\\
&=\frac{a_0}{\big [1+\OO\big (u^{1-\beta} (u+k)^{-1}\big )\big
]^{\a}} +
\OO\left (\frac{1}{(u+k)^{\beta} \big [1+ \OO\big (u^{1-\beta}(u+k)^{-1}\big)\big ]^{\a+\beta}}\right ) \\
&=a_0 + \OO\left (\frac{1}{(u+k)^{\beta}}\right ).
\end{align*}
Analogously, one checks that $b_{k} = b_0 + \OO((u+k)^{-\beta})$ and
the proof of the lemma is concluded.
\end{proof}
\begin{remark}
Note that as a simple consequence of this technical lemma, we have
that for $ x\in \Vr$, $\Ra^k(x) \to 0$ as $k\to \infty$. Hence, if
we are able to prove the existence of a parametrization $K$
satisfying the invariance equation $F\circ K - K \circ \Ra=0$,
since $F^k(K(x))= K(\Ra^k(x))$, the image of~$K$ will represent a
subset of the stable invariant manifold .
\end{remark}

\section{The invariant manifold. The differentiable case}\label{sec:theinvariantmanifold}

In this section we prove Theorem~\ref{maintheorem} in the
differentiable case. This is accomplished by stating and solving a
fixed point equation in some appropriate Banach spaces. The proof
follows along the same lines of the equivalent result in
\cite{BFdLM2007}, but there are technical differences that prevent
to apply directly that proof. However, these differences are not
important enough to justify the inclusion of the whole proof. For
this reason, in this section we include a series of technical
lemmas, equivalent to those in~\cite{BFdLM2007}, with the suitable
hypothesis in our current case. We sketch their proofs when they are
different enough of their counterpart in~\cite{BFdLM2007}. The
existence of the manifold follows directly from this set of lemmas.

Along this section we will assume that all the hypotheses of
Theorem~\ref{maintheorem} hold. We will denote by $C$ a positive constant which
may take different values at different places.

\subsection{Preliminary facts}\label{preliminaryfacts}
We take $\k\in \NN$ such that $\ri<\k\leq r$ with $\ri$ introduced
in~\eqref{defri} and we decompose our map $ F$ into
\begin{equation*}
F(x,y)=\FT(x,y) + \FR{\k}(x,y),
\end{equation*}
where $\FT$ is the Taylor expansion of $ F$ up to degree~$\k-1$ and
$\FR{\k}(x)=\oo(\Vert x \Vert^{\k-1})$. In fact, since $\k\leq r$,
we actually have that $\FR{\k}(x)=\OO(\Vert x \Vert^{\k})$.  By
hypothesis, there exist $\Kl{\leq}$ and $ R$, $\CC^{\rfm}$ functions
such that
\begin{equation}\label{eqFTKla}
\FT \circ \Kl{\leq} - \Kl{\leq} \circ  R= \TT{\k},\qquad \TT{\k}(x)
= \OO(\Vert x \Vert)^{\k}.
\end{equation}

Since $\FT$ is a polynomial and $\Kl{\leq}, R$ satisfy item (c) in
Theorem~\ref{maintheorem}, the remainder $\TT{\k}$ satisfies
\begin{equation*}
D^j \TT{\k}(x)=\OO(\Vert x \Vert^{\k-j}), \qquad  j=0,\cdots, \rfm.
\end{equation*}
Finally, using that $D^j \FR{\k}$ is the Taylor's remainder of $D^j
F$,
\begin{equation*}
D^j \FR{\k}(x,y)=\OO(\Vert (x,y) \Vert^{\k-j}),\qquad j=0,\cdots, r.
\end{equation*}
We will use these simple facts without special mention.

As a consequence of \eqref{eqFTKla}, the purpose of this section is
to prove that  there is only one solution $\Kl{>}$ of
\begin{equation}
\label{eqFTKfull}  F \circ (\Kl{\leq} + \Kl{>}) - (\Kl{\leq} +
\Kl{>}) \circ  R = 0.
\end{equation}
We will see that equation \eqref{eqFTKfull} can be rewritten as a
fixed point equation. Then, a solution of this fixed point equation
will be found.

\subsection{The Banach spaces and the main statement}
Given $E$ a Banach space, we will denote
\begin{equation*}
\X{\nu }{k}(E) = \{ h:\Vr\subset \RR^{n} \to E : h \in
\mathcal{C}^{\nu}, \; \max_{0\leq j \leq \nu} \sup_{ x\in \Vr}
\frac{\Vert D^j h( x) \Vert }{\Vert  x\Vert^{k-j\eta}} <\infty\}
\end{equation*}
with $\eta =1-L+N$ defined in~\eqref{defeta}. This quantity was
already introduced in \cite{BFdLM2007}, jointly with  a
motivating example showing that, if $K^> (x) =\OO(\Vert
x \Vert^{k})$, then $D K^> (x)$ is not necessarily $\OO(\Vert x \Vert^{k-1})$.

With this definition, if $h\in \X{\nu}{k}(E)$, then $Dh \in
\X{\nu-1}{k-\eta}(L(\RR^{n};E))$. Thus we
understand by $\Vert D^{j} h( x) \Vert $ the norm of the $j$-linear
map induced by the norm in $E$.

We endow $\X{\nu}{k}$ with the norm
\begin{equation*}
\Vert h \Vert_{\nu,k} = \max_{0\leq j \leq \nu} \sup_{x\in \Vr}
\frac{\Vert D^j h( x) \Vert }{\Vert  x\Vert^{k-j\eta}}
\end{equation*}
and it becomes a Banach space. We denote by
$\BB_{k}^{\nu}(\varsigma)\subset \X{\nu}{k}$ the open ball of radius
$\varsigma$.

\begin{proposition}\label{prop:fixedpoint} Assume all the conditions in Theorem~\ref{maintheorem}.
Let $\k \in \NN$ be such that $\ri<\ell\leq r$ (the case $r=\infty$
is included). Then there exists $\varsigma_*>0$ such that for any
$\varsigma\geq \varsigma_*$ there exists $\r$ small enough such that
equation \eqref{eqFTKfull} has a unique solution $\Kl{>}:\Vr \to
\RR^{n+m}$ belonging to $\BB^{\rvl}_{\k-N+1}(\varsigma)$ with $\rvl
\leq \min\{r,\rfm\}$ and satisfying
\begin{equation*}
\rvl  \max \left \{ \eta- \frac{\Ap}{\ddp},0 \right\}
<\k-N+1-\frac{\Bp}{\ap}.
\end{equation*}
\end{proposition}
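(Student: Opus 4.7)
The strategy is to recast~\eqref{eqFTKfull} as a fixed point equation. Writing $K = \Kl{\leq} + \Kl{>}$ and expanding $F$ around $\Kl{\leq}(x)$ using~\eqref{eqFTKla} transforms~\eqref{eqFTKfull} into the linear functional equation
\begin{equation*}
\Kl{>}\circ R - A(x)\,\Kl{>} = \Phi(\Kl{>}), \qquad A(x) := DF(\Kl{\leq}(x)),
\end{equation*}
where $\Phi(\Kl{>}) := \TT{\k} + \FR{\k}\circ \Kl{\leq} + \mathcal{N}(\Kl{>})$ and $\mathcal{N}$ collects the terms at least quadratic in $\Kl{>}$. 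Since $\k > \ri \ge 2N-1$, each piece of $\Phi(\Kl{>})$ is $\OO(\Vert x\Vert^{\k})$ whenever $\Kl{>}\in \BB^{\rvl}_{\k-N+1}(\varsigma)$ and $\r$ is small enough.

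The core step is to invert the linear operator $\mathcal{M}\eta := \eta\circ R - A\eta$. By~H2 and~\eqref{defF}, $A(x)$ is block upper triangular modulo $\OO(\Vert x\Vert^{L})$, with diagonal blocks $I + D_x p(x,0) + \OO(\Vert x\Vert^{N})$ and $I + D_y q(x,0) + \OO(\Vert x\Vert^{M})$. Solving block by block, the formal inverse is the telescoping series
\begin{equation*}
\mathcal{M}^{-1}\phi(x) = -\sum_{j\ge 0}\biggl(\prod_{i=0}^{j} A_\star(R^i(x))^{-1}\biggr)\phi_\star(R^j(x)), \qquad \star\in\{x,y\},
\end{equation*}
up to minor rearrangements absorbing the off-diagonal coupling. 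The bounds in~\eqref{defconstants}, namely $\Vert A_x(x)^{-1}\Vert \le 1 + \Bp\Vert x\Vert^{N-1} + \OO(\Vert x\Vert^N)$ and the analogue for $A_y$ in terms of $\Bq$, combined with the estimate $\Vert R^i(x)\Vert^{N-1} \le \alpha/(\ap(u+i))(1+o(1))$ from Lemma~\ref{Rlemma}, yield via a log comparison $\Vert\prod_{i=0}^{j} A_\star(R^i(x))^{-1}\Vert \le C((u+j)/u)^{\Bp/\ap}$. Combined with $\Vert\phi(R^j(x))\Vert \le C\Vert R^j(x)\Vert^{\k}$, summing in $j$ gives $\mathcal{M}^{-1}\colon \X{0}{\k}\to \X{0}{\k-N+1}$ provided $\k-N+1-\Bp/\ap>0$, which is the zero-th order part of $\k>\ri$.

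For higher derivatives, differentiating the telescoping series under the sum produces extra factors involving $DR$, $D^2R$, and derivatives of $A(R^i(x))^{-1}$. Careful bookkeeping (parallel to that in~\cite{BFdLM2007}, adapted to the block structure above) shows that each additional derivative costs at most a factor $\Vert R^i(x)\Vert^{-(\eta - \Ap/\ddp)_+}$ in the worst case. Summing in $j$ then delivers $\mathcal{M}^{-1}\colon \X{\rvl}{\k}\to \X{\rvl}{\k-N+1}$ boundedly, exactly under the hypothesis $\rvl (\eta - \Ap/\ddp)_+ < \k-N+1-\Bp/\ap$ imposed in the proposition.

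Finally, set $\mathcal{S}(\Kl{>}) := \mathcal{M}^{-1}\Phi(\Kl{>})$ on $\BB^{\rvl}_{\k-N+1}(\varsigma)$. Since the Lipschitz constant of $\mathcal{N}$ on this ball is $O(\varsigma\r^{\k-N})$, for $\varsigma\ge \varsigma_*$ large and $\r$ small, $\mathcal{S}$ maps the ball into itself and is a contraction; Banach's theorem yields the unique $\Kl{>}$. The main obstacle, and the source of the constraint on $\rvl$, is the derivative bookkeeping in $\mathcal{M}^{-1}$: balancing the $\Bp/\ap$-growth of the matrix cocycle against the $(\eta - \Ap/\ddp)_+$-cost per derivative produces precisely the inequality in the statement.
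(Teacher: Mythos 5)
Your overall strategy is the same as the paper's (rewrite \eqref{eqFTKfull} as $\L{0}(\Kl{>})=\FF(\Kl{>})$, invert the linear conjugacy-type operator by the telescoping series, work in the weighted spaces $\X{\nu}{k}$ with the per-derivative cost $\max\{\eta-\Ap/\ddp,0\}$, and contract), but two of the steps you treat as routine are precisely where the paper has to work. First, the linear cocycle. You claim the product of inverted blocks grows like $C\big((u+j)/u\big)^{\Bp/\ap}$ after ``solving block by block'' and ``absorbing the off-diagonal coupling''. The upper-right block $D_yp(x,0)$ is of the \emph{critical} order $\Vert x\Vert^{N-1}$, the same as the diagonal deviation, so it cannot be absorbed for free; when $M<N$ the lower-left coupling applied to $S_x\in\X{0}{\k-N+1}$ produces a forcing of order $\k-(N-M)<\k$, so the order bookkeeping of a naive triangular solve does not close as stated; and when $M=N$ the $y$-block may be expanding (H2 allows $\Bq<0$), so one must argue that its growth does not worsen the exponent $\Bp/\ap$ --- this is exactly where $\Bq>-N\ap$ together with $\Bp\geq N\ap$ (Lemma~\ref{defKapwell}) must be invoked, and you never do. The paper resolves all of this at once by the preliminary scaling $S_\delta(x,y)=(x,\delta y)$, which makes $D_yp$ harmless and yields the single bound $\Vert (D\FT)^{-1}(\Kl{\leq}(x))\Vert\leq 1+\B\Vert x\Vert^{N-1}$ (Lemma~\ref{lem:weakexpansion}), after which the full-matrix telescoping sum is estimated with one scalar rate; your sketch contains no substitute for this step.

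Second, you assert that the fixed-point map is a contraction directly on the ball of $\X{\rvl}{\k-N+1}$ with Lipschitz constant $O(\varsigma\r^{\k-N})$. Estimating the Lipschitz constant of $K\mapsto \FR{\k}\circ(\Kl{\leq}+K)$ and of the quadratic Taylor-remainder terms in a norm involving $\rvl$ derivatives requires one derivative of $F$ (or of $\Kl{\leq}$) beyond $\rvl$, which is not available when $\rvl=\min\{r,\rfm\}$; this standard loss of a derivative is why the paper contracts in $\X{\rvl-1}{\k-N+1}$ and recovers the last derivative and the sharp regularity separately, through the differentiated operators $\L{j},\S{j}$, their injectivity (Lemma~\ref{lem:Ljonetoone}), and the new estimates on $\Vert DR^i\Vert$ and $\Vert D^2R^i\Vert$ with exponent $\Ap\kd^{-1}$ and the case split $\Ap<\kd$ versus $\Ap>\kd$ (Lemma~\ref{lem:weakcontraction}) --- the part that is genuinely different from the one-dimensional setting of \cite{BFdLM2007}, where this exponent is simply $N$. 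Delegating this to ``bookkeeping parallel to \cite{BFdLM2007}'' skips the content that makes the multidimensional constraint $\rvl\max\{\eta-\Ap/\ddp,0\}<\k-N+1-\Bp/\ap$ appear. Finally, to pass from your fixed-point equation back to \eqref{eqFTKfull} and to get uniqueness in the stated ball you need $\mathcal{M}$ (i.e.\ $\L{0}$) to be injective on $\X{0}{\k-N+1}$, which must be proved and is not a formal consequence of writing down the series inverse.
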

Note that when $\eta \ddp \leq \Ap$, the maximum differentiability
degree is $\rvl = \min \{ r,\rfm\}$. In addition $\rv=\rvl$ for
$\k=r$ is the value stated in Theorem~\ref{maintheorem}.

In the next sections we prove this proposition by using  the same
scheme as in \cite{BFdLM2007}.

Next proposition proves the uniqueness statement of Theorem~\ref{maintheorem}. This
proposition ends the proof of Theorem~\ref{maintheorem} in the
differentiable case.
\begin{proposition}\label{cor:fixedpoint} Assume the hypotheses of Proposition~\ref{prop:fixedpoint}.
We denote by $\r_*>0$ the corresponding quantity provided in
Proposition~\ref{prop:fixedpoint} for the radius $\varsigma_*$. Then
equation~\eqref{eqFTKfull} has a unique solution $\Kl{>}:V_{\r_*}
\to \RR^n$ in $\X{\rvl}{\k-N+1}$.
\end{proposition}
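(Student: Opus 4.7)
The plan is to upgrade the ball-based local uniqueness supplied by Proposition~\ref{prop:fixedpoint} to global uniqueness in $\X{\rvl}{\k-N+1}$: any two candidate solutions must agree on a sufficiently small neighborhood of $0$, and the invariance equation then propagates this agreement throughout $V_{\r_*}$.

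Concretely, let $\Kl{>}_1,\Kl{>}_2\in\X{\rvl}{\k-N+1}$ be two solutions of~\eqref{eqFTKfull} on $V_{\r_*}$. I set $\varsigma:=\max\{\varsigma_*,\Vert\Kl{>}_1\Vert_{\rvl,\k-N+1},\Vert\Kl{>}_2\Vert_{\rvl,\k-N+1}\}$ and apply Proposition~\ref{prop:fixedpoint} with this enlarged radius; this produces some $\bar\r\le\r_*$ for which~\eqref{eqFTKfull} admits a unique solution in $\BB^{\rvl}_{\k-N+1}(\varsigma)$ on $V_{\bar\r}$. Since both $\Kl{>}_i|_{V_{\bar\r}}$ belong to that ball and satisfy the equation, they coincide on $V_{\bar\r}$.

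To extend the identity to $V_{\r_*}$, write $K_i:=\Kl{\leq}+\Kl{>}_i$ and $h:=\Kl{>}_1-\Kl{>}_2$. Subtracting the two invariance equations $F\circ K_i=K_i\circ R$ yields
\[
h(R(x)) = A(x)\,h(x),\qquad A(x):=\int_0^1 DF\bigl(K_2(x)+t\,h(x)\bigr)\,dt,
\]
and iteration gives $h(R^k(x))=A(R^{k-1}(x))\cdots A(x)\,h(x)$ for every $k\ge 0$. Given $x\in V_{\r_*}$, Lemma~\ref{Rlemma} supplies some $k_0$ with $R^k(x)\in V_{\bar\r}$ whenever $k\ge k_0$; in particular $h(R^{k_0}(x))=0$. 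Because $DF(0)=\Id$ and $\Vert DF(z)-\Id\Vert=\OO(\Vert z\Vert^{L-1})$, for $\r_*$ small enough each $A(R^j(x))$ is invertible, and iterating $h(R^{j-1}(x))=A(R^{j-1}(x))^{-1}h(R^j(x))$ backwards from $j=k_0$ down to $j=1$ yields $h(x)=0$, giving $\Kl{>}_1\equiv\Kl{>}_2$ on $V_{\r_*}$.

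The main obstacle is making the invertibility of $A$ uniform on $V_{\r_*}$: the pointwise bound $\Vert K_2(y)+t\,h(y)\Vert\le C\Vert y\Vert+3\varsigma\Vert y\Vert^{\k-N+1}$ depends on $\varsigma$, which is not controlled a priori. The resolution exploits $\k>\ri\ge N-1+\Bp/\ap$: Lemma~\ref{Rlemma} gives $\Vert R^k(x)\Vert^{N-1}\sim c\,(u+k)^{-1}$, so $\Vert h(R^k(x))\Vert=\OO\bigl((u+k)^{-(\k-N+1)/(N-1)}\bigr)$ decays faster than the polynomially growing upper bound on $\Vert\prod_{j=0}^{k-1}A(R^j(x))^{-1}\Vert$, whose exponent is controlled by $\Bp/\ap$. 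Passing to the limit in $\Vert h(x)\Vert\le\Vert\prod_{j=0}^{k-1} A(R^j(x))^{-1}\Vert\cdot\Vert h(R^k(x))\Vert$ as $k\to\infty$ then forces $h(x)=0$, in the same spirit as the uniqueness argument of~\cite{BFdLM2007}.
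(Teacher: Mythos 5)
Your first step coincides with the paper's: enlarge the radius so that both candidates lie in the ball, apply Proposition~\ref{prop:fixedpoint} with that radius to get a smaller $V_{\bar{\r}}$ on which the solution in the ball is unique, and conclude $\Kl{>}_1=\Kl{>}_2$ on $V_{\bar{\r}}$. (Minor point: with $\varsigma=\max\{\varsigma_*,\Vert \Kl{>}_1\Vert_{\rvl,\k-N+1},\Vert \Kl{>}_2\Vert_{\rvl,\k-N+1}\}$ one of the restrictions may lie on the boundary of the \emph{open} ball $\BB^{\rvl}_{\k-N+1}(\varsigma)$; take $\varsigma$ strictly larger, as the paper does with $\varsigma=\varsigma_*+\max\{\cdots\}$.)

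The propagation from $V_{\bar{\r}}$ to $V_{\r_*}$ is where you depart from the paper, and there is a genuine gap there. The paper propagates through the map itself: from $F\circ K_i=K_i\circ R$ one gets $F^k\circ K_i=K_i\circ R^k$, item (2) of Lemma~\ref{Rlemma} gives a $k$ with $R^k(V_{\r_*}\setminus V_{\bar{\r}})\subset V_{\bar{\r}}$, and the relation $K_i=F^{-k}\circ K_i\circ R^k$ (using the local inverse of $F$, $DF(0)=\Id$) determines the values on $V_{\r_*}$ from those on $V_{\bar{\r}}$, where the two solutions already agree. Your route instead uses the difference relation $h\circ R=A\,h$, $A(x)=\int_0^1 DF(K_2(x)+t\,h(x))\,dt$, and to deduce $h(x)=0$ from $h(R^{k_0}(x))=0$ you need every $A(R^j(x))$, $0\le j<k_0$, to be injective (and even for $A$ to be defined you need the segment joining $K_1(R^j(x))$ and $K_2(R^j(x))$ to stay in $U$). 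You correctly flag that this is not automatic, because these points are displaced from $(R^j(x),0)$ by up to $C\varsigma\Vert R^j(x)\Vert^{\k-N+1}$ and $\varsigma$ is not controlled by $\r_*$: indeed $\r_*$ is fixed in the statement (it is the radius produced for $\varsigma_*$) and cannot be shrunk a posteriori to depend on the norms of the given solutions. But your proposed resolution does not resolve this: the inequality $\Vert h(x)\Vert\le\Vert\prod_{j=0}^{k-1}A(R^j(x))^{-1}\Vert\,\Vert h(R^k(x))\Vert$ already presupposes invertibility of all these factors together with quantitative bounds on their inverses of the type of Lemmas~\ref{lem:weakexpansion} and~\ref{lem:weakcontraction}; those bounds are established only for $(D\FT)^{-1}$ evaluated along the approximate manifold $\Kl{\leq}\circ R^m$ (and after the scaling), and extending them to $DF$ at points displaced by $\OO(\varsigma\Vert x\Vert^{\k-N+1})$ requires $\Vert x\Vert$ small \emph{depending on} $\varsigma$ --- i.e., they hold on a $\varsigma$-dependent ball, which is exactly where you already know $h=0$, not on the annulus $V_{\r_*}\setminus V_{\bar{\r}}$ where you need them. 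Moreover, the decay-versus-growth comparison with exponent $\Bp/\ap$ is beside the point: once $h(R^{k_0}(x))=0$ the tail of the orbit contributes nothing, and what is needed is injectivity of the finitely many matrices $A(R^j(x))$, $j<k_0$ --- precisely the unproved statement. The clean way to finish is the paper's: use $K_i=F^{-k}\circ K_i\circ R^k$, which only involves composition with $F$ and its local inverse, and avoids estimating $DF$ along segments joining the two a priori uncontrolled solutions.
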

\begin{proof}
Let $K_1=\Kl{\leq} + \Kl{>}_1$ and $K_2 = \Kl{\leq}+\Kl{>}_2$ be two
solutions of the invariance equation $F\circ K = K\circ R$ with
$\Kl{>}_1,\Kl{>}_2 \in \X{\rvl}{\k-N+1}$. We denote by $V_{\r_0}$
their common domain (all the suprema will be taken in this domain) and we
consider
$$
\varsigma= \varsigma_* + \max\{\Vert \Kl{>}_1
\Vert_{\rvl,\k-N+1},\Vert \Kl{>}_2
\Vert_{\rvl,\k-N+1}\}>\varsigma_*.
$$
By Proposition~\ref{prop:fixedpoint}, there exists $\r\leq \r_0$
small enough and a unique function $\Kl{>}:\Vr \to \RR^{n+m}$,
belonging to $\X{\rvl}{\k-N+1}$ with norm $\Vert \Kl{>}
\Vert_{\rvl,\k-N+1}\leq \varsigma$. Since, for $i=1,2$, $\Vert
\Kl{>}_i\Vert_{\rvl,\k-N+1}<\varsigma$ they have to coincide in
$\Vr$. In addition, we can extend $\Kl{>}$ to $V_{\r_*}$ by using
the invariance equation. Indeed, let $K=\Kl{\leq}+ \Kl{>}$. First,
we notice that by~(2) of Lemma~\ref{Rlemma}, there exists~$k$ such
that $R^k(V_{\r_*}\backslash \Vr) \subset \Vr$. Second, the relation
$K=F^{-k} \circ K \circ R^k$ extends $K$ to $V_{\r_*}$ and the
result is proven.
\end{proof}

In $\RR^{n+m}$ we will
use the norm
\begin{equation}\label{defnormxy}
\Vert (x,y) \Vert = \max \{ \Vert x \Vert , \Vert y \Vert\},\qquad
(x,y) \in \RR^{n+m},
\end{equation}
where the chosen norms in $\RR^{n}$ and $\RR^{m}$ are such that hypotheses
H1, H2, and H3 hold.

\subsection{A compilation of technical lemmas}

The lemmas in this section are the translation to our current
setting of the lemmas in \cite{BFdLM2007}.

We first present the following elementary properties of the Banach
spaces $\X{\nu}{k}$.

\begin{lemma} \label{Banachprop}
The Banach spaces $\X{\nu}{k}$ satisfy:
\begin{enumerate}
\item[(1)] Let $f( x) \in L(X_1,X_2)$ with $f\in \X{\nu}{k}$ and $g( x) \in X_1$ with
$g\in  \X{\nu}{l}$, then $f \cdot  g \in  \X{\nu}{k+l}$ and $\Vert f
\cdot g\Vert_{\nu,k+l} \leq 2^{\nu}\Vert f\Vert_{\nu,k} \Vert g
\Vert_{\nu,l}$.
\item[(2)] Let $f:U\subset \RR^{n+m} \to E$ be a $\CC^{\nu}$ map, with $E$ a Banach space, such that
$\Vert D^l f(x) \Vert =\OO(\Vert x \Vert^{j-l})$ for all $0\leq
l\leq \nu$. Then, for any map $g :\Vr \to U$ such that $g\in
\X{i}{1}$ for some $0\leq i \leq \nu$ we have that $f\circ g \in
\X{i}{j}$.
\end{enumerate}
\end{lemma}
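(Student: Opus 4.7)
Both items reduce to direct calculations with the Leibniz rule and the Fa\`a di Bruno formula respectively; in each case the key algebraic point is that the weighted exponents add up correctly.

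For (1), the plan is to apply the Leibniz rule
$$
D^j(f\cdot g)(x) = \sum_{i=0}^{j}\binom{j}{i}D^i f(x)\cdot D^{j-i}g(x)
$$
and estimate each summand using the defining norms of $\X{\nu}{k}$ and $\X{\nu}{l}$. The two factors contribute $\Vert x\Vert^{k-i\eta}$ and $\Vert x\Vert^{l-(j-i)\eta}$, whose product equals $\Vert x\Vert^{k+l-j\eta}$ independently of $i$. The sum of binomial coefficients then provides the factor $2^j\le 2^\nu$, yielding both $f\cdot g\in \X{\nu}{k+l}$ and the claimed norm bound.

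For (2), I would use Fa\`a di Bruno's formula, which expresses $D^i(f\circ g)(x)$ as a sum over set partitions $\pi$ of $\{1,\dots,i\}$, each term being the product of $D^{|\pi|}f(g(x))$ and one factor $D^{|B|}g(x)$ for every block $B\in\pi$. Since $g\in\X{i}{1}$ forces $\Vert g(x)\Vert=\OO(\Vert x\Vert)$ (the case $p=0$ of the defining bound), the hypothesis on $f$ gives $\Vert D^{|\pi|}f(g(x))\Vert = \OO(\Vert x\Vert^{j-|\pi|})$. Each factor $\Vert D^{|B|}g(x)\Vert$ is $\OO(\Vert x\Vert^{1-|B|\eta})$, so the product over $B\in\pi$ is $\OO(\Vert x\Vert^{|\pi|-i\eta})$, using the identity $\sum_{B\in\pi}|B|=i$. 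The two $|\pi|$ terms cancel and the overall exponent is $j-i\eta$, as required; summing over the finitely many partitions absorbs the combinatorial constants into the implicit $\OO$.

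The only point of substance is this cancellation in (2) between the $j-|\pi|$ coming from the outer derivative of $f$ and the $|\pi|$ arising from $\sum_{B\in\pi}(1-|B|\eta)=|\pi|-i\eta$; this is precisely what makes the scale $\X{\nu}{k}$ compatible with composition by maps whose $\OO(\Vert x\Vert)$ values vanish at the origin, and hence well-suited to the parametrization method. Beyond this observation, everything is routine bookkeeping, and I anticipate no genuine obstacle.
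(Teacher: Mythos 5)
Your item (1) is correct: the Leibniz rule, the fact that the weighted exponents add up to $k+l-j\eta$ independently of how the derivatives are distributed, and $\sum_{i}\binom{j}{i}=2^{j}\le 2^{\nu}$ give exactly the stated bound. (The paper gives no proof of this lemma, presenting it as the translation of the elementary lemmas of \cite{BFdLM2007}, and this is certainly the intended argument.)

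In item (2) the Fa\`a di Bruno strategy is the right one, but one step is not valid in the generality in which you use it. From $\Vert g(x)\Vert=\OO(\Vert x\Vert)$ and $\Vert D^{|\pi|}f(y)\Vert=\OO(\Vert y\Vert^{\,j-|\pi|})$ you conclude $\Vert D^{|\pi|}f(g(x))\Vert=\OO(\Vert x\Vert^{\,j-|\pi|})$; this inference only works when $j-|\pi|\ge 0$, since for a negative exponent an \emph{upper} bound on $\Vert g(x)\Vert$ points the wrong way ($g\in\X{i}{1}$ carries no lower bound, and $\Vert g(x)\Vert$ may be much smaller than $\Vert x\Vert$). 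The statement allows $i>j$, and such compositions do occur in the paper (e.g.\ $D(D\FT\circ\Kl{\leq})$ has vanishing order $L-2$, which can be $0$, and is differentiated further), so blocks with $|\pi|>j$ genuinely appear. The gap is not cosmetic: your chain of estimates never uses that $f$ is smooth \emph{at} the origin, and with only the stated bounds it can fail — take $\eta=1$, $f(y)=\Vert y\Vert\sin(\log\Vert y\Vert)$ (which satisfies the hypotheses with $j=1$, $\nu=2$ away from $0$) and $g(x)=\bigl(x\sin(\log x),x^{3}\bigr)\in\X{2}{1}$; then $|(f\circ g)''(x_k)|$ is of order $x_k^{-3}$ along $x_k=e^{-k\pi}$, not $\OO(x_k^{-1})$. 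The repair is short but must be stated: for each derivative order $p\le i$ split the Fa\`a di Bruno sum. For partitions with $|\pi|\le j$ your computation stands; for $|\pi|>j$ use instead that $f$ is $\CC^{\nu}$ on the neighbourhood $U$ of the origin and that $g(\Vr)$ lies in a compact subset of $U$, so $D^{|\pi|}f\circ g$ is bounded, and the corresponding term is $\OO(\Vert x\Vert^{\,|\pi|-p\eta})=\OO(\Vert x\Vert^{\,j-p\eta})$ because $|\pi|-j>0$ and $\Vert x\Vert\le\r$ is bounded. (Alternatively, in all the paper's applications $g(x)=(x,0)+\OO(\Vert x\Vert^{2})$, so $\Vert g(x)\Vert\ge c\Vert x\Vert$ and your inequality does hold there, but that lower bound is extra information not contained in $g\in\X{i}{1}$.)
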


For any $\ka<\ap,\kb>\bp$, we define the auxiliary constant
\begin{equation*}
\kd  =
\begin{cases}
\;\;\ka,  & \text{if}\;\; \Ap\leq 0,\\
\;\;\kb, &  \text{otherwise}.
\end{cases}
\end{equation*}

From now on we fix values $\ka<\ap$, $\kb>\bp$ and $\B>\Bp$ such
that if either a) $\Ap>\eta \ddp$, or b) $\ddp <\Ap<\eta \ddp$ or c)
$\Ap < \ddp$ then a) $\Ap>\eta \kd$,  b) $\kd< \Ap < \eta \kd$ or c)
$\Ap<\kd$ respectively. We also choose the constants $\ka,\kb$ such
that the cases $\Ap=\eta \kd$ or $\Ap=\kd$ can be skipped even when
either $\Ap=\eta \ddp=\eta \bp$ or $\Ap=\ddp$ respectively. Below we
introduce~$\lo$ and we further impose that
\begin{equation}\label{KAKBB}
\begin{aligned}
\ri < \lo:=N-1 + \frac{\B}{\ka}+\max \left \{\eta-\frac{\Ap}{\kd},0\right \}& <\ell \leq r, \\
\k-N+1-\frac{\B}{\ka}-\rvl \max \left \{\eta-\frac{\Ap}{\kd},0\right
\}& >0.
\end{aligned}
\end{equation}
The first property holds because $\ri<\k\leq r$. The second one
holds by the definition of $\rvl$ in Proposition~\ref{prop:fixedpoint}.
The constant $\lo$ depends on the values $\ka,\kb,\B$ but it
can be chosen arbitrarily close to $\ri$ (see~\eqref{defri} for the
definition of $\ri$).

\subsubsection{Scaling}\label{sec:scaling}
We perform a scaling in the $y$-variables by the change
$S_{\delta}(x,y) = (x, \delta y)$. Then, equations~\eqref{eqFTKla}
and~\eqref{eqFTKfull} become
\begin{equation}\label{eqFTKltilde}
\tilde{\FT} \circ \Klt{\leq} - \Klt{\leq} \circ  R= \tilde{T}_{\ell}
\end{equation}
and
\begin{equation}\label{eqFTKfulltilde} \tilde{ F} \circ (\Klt{\leq} + \tilde{\Kl{>}}) - (\Klt{\leq} +
\tilde{\Kl{>}}) \circ  R = 0,
\end{equation}
where $\tilde{\FT} = S^{-1}_{\delta} \circ \FT \circ S_{\delta}$,
$\tilde{ F} = S^{-1}_{\delta} \circ  F \circ S_{\delta}$,
$\Klt{\leq} = S^{-1}_{\delta} \circ \Kl{\leq} $ and $\Klt{>} =
S^{-1}_{\delta} \circ \Kl{>} $.

We observe that
\[
\tilde{\FT}_x (x,y) = x +  p(x,0) +  p(x,\delta y) - p(x,0) +
\hat{f}(x,\delta y),
\]
where, by hypothesis, $\tilde p (x,y) =  p(x,\delta y) - p(x,0)$ is
a homogeneous polynomial of degree~$N$ and $\hat{f}(x,\delta y)
=\OO(\Vert (x,\delta y)\Vert^{N+1})$ . We have that $\tilde p (x,y)
= \hat p_{N-1} (x,y) y$, where
\[
\hat p_{N-1}(x,y) = \delta \int_0^1 D_y p(x,\tau \delta y)\,d\tau
\]
is a matrix whose coefficients are homogeneous polynomials of
degree~$N-1$. It satisfies $\hat p_{N-1}(x,0) = \delta D_y p(x,0)$.

\begin{lemma}
\label{lem:weakexpansion} With~$\B$ given in~\eqref{KAKBB}, there
exist $\r,\delta>0$ small enough such that
\[
\|(D\tilde{\FT})^{-1}(\tilde{\Kl{\leq}}(x))\| \le
1+\B\|x\|^{N-1},\qquad \text{for all  }x\in \Vr .
\]
\end{lemma}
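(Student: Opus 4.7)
The plan is to expand $D\tilde{\FT}(\Klt{\leq}(x))$ in the natural $2\times 2$ block form coming from the splitting $\RR^{n+m}=\RR^n\times\RR^m$, invert it via a Neumann series, and then extract the desired bound from the definition of $\Bp$ in~\eqref{defconstants}.

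First, I would compute the four blocks at $\Klt{\leq}(x)=(x+\OO(\|x\|^2),\delta^{-1}\OO(\|x\|^2))$. Using the formula for $\tilde{\FT}_x$ given just above the lemma and the analogous expression $\tilde{\FT}_y(x,y)=y+\delta^{-1}q(x,\delta y)+\delta^{-1}g(x,\delta y)$ (well defined because hypothesis H2 gives $q(x,0)\equiv 0$), a direct computation yields: the $(xx)$-block equals $I+D_x p(x,0)+\OO(\|x\|^N)$; the $(xy)$-block equals $\delta D_y p(x,0)+\OO(\delta\|x\|^N)$; the $(yy)$-block equals $I+D_y q(x,0)+\OO(\|x\|^M)$; and the $(yx)$-block equals $\delta^{-1}\OO(\|x\|^M)$. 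The crucial estimate is the last one: H2 also implies $D_x q(x,0)=0$, so for $v=\OO(\|x\|^2)$ one has $D_x q(x,v)=\OO(\|v\|\,\|x\|^{M-2})=\OO(\|x\|^M)$, which together with the $\delta^{-1}$ prefactor gives the stated bound.

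Second, writing $D\tilde{\FT}=I+E$, I would invert via the Neumann series $(I+E)^{-1}=I-E+E^2-E^3+\cdots$, valid once $\|E\|<1$; this yields $\|(I+E)^{-1}\|\le \|I-E\|+\|E\|^2/(1-\|E\|)$. With the max-norm~\eqref{defnormxy}, the operator norm of a block matrix is the maximum of the two block-row sums. For the $x$-row, the essential step is the definition of $\Bp$ in~\eqref{defconstants}, which directly gives $\|I-D_x p(x,0)\|\le 1+\Bp\|x\|^{N-1}$, whence the $x$-row of $\|I-E\|$ is at most $1+\Bp\|x\|^{N-1}+\delta C\|x\|^{N-1}+\OO(\|x\|^N)$. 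The $y$-row does not dominate: when $\Bq>0$ the contribution of the $(yy)$-block is $\le 1$ and the $(yx)$-block $\delta^{-1}\OO(\|x\|^M)$ can be absorbed by shrinking $\r$ relative to $\delta$; when $\Bq<0$ (only possible if $M=N$), the bound $|\Bq|<N\ap\le\Bp<\B$ from Lemma~\ref{defKapwell} still controls the $y$-row by $1+\B\|x\|^{N-1}$.

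Third, combining the previous estimates gives $\|(I+E)^{-1}\|\le 1+\Bp\|x\|^{N-1}+\delta C\|x\|^{N-1}+\OO(\|x\|^N)+\OO(\|x\|^{2(N-1)})$. To achieve the desired $1+\B\|x\|^{N-1}$ I would first fix $\delta>0$ small enough that $\delta C<(\B-\Bp)/2$, and then shrink $\r$ so that the remaining $\OO(\|x\|^N)$, $\OO(\|x\|^{2(N-1)})$ and $\delta^{-1}\OO(\|x\|^M)$ contributions, divided by $\|x\|^{N-1}$, are bounded by $(\B-\Bp)/2$. The main obstacle will be the coupling of the two smallness parameters: $\delta$ must be small to kill the $(xy)$-contribution $\delta D_y p(x,0)$, but the $(yx)$-block is amplified by $\delta^{-1}$. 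This is resolved precisely because H2 forces $D_x q(x,0)=0$, supplying the two extra powers of $\|x\|$ in the $(yx)$-block that compensate the $\delta^{-1}$ factor once $\r$ is taken small enough relative to the already fixed $\delta$.
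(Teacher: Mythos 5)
Your set-up (the block form of $D\tilde{\FT}(\Klt{\leq}(x))$, the observation that H2 kills $D_xq(x,0)$ and makes the $(yx)$-block $\delta^{-1}\OO(\|x\|^{M})$, and the use of the definitions of $\Bp$, $\Bq$ together with Lemma~\ref{defKapwell}) is the same as the paper's, and your $x$-row estimate is correct. The gap is in the $y$-row in the case $M<N$, which the hypotheses allow and which is exactly the case H2 addresses by demanding $\Bq>0$. Two of your steps fail there. First, the scalar Neumann bound $\|(I+E)^{-1}\|\le\|I-E\|+\|E\|^{2}/(1-\|E\|)$ produces a remainder of size $\|E\|^{2}=\OO(\|x\|^{2(M-1)})$, not $\OO(\|x\|^{2(N-1)})$ as you claim, because the $(yy)$-block $D_yq(x,0)$ is only $\OO(\|x\|^{M-1})$; when $2(M-1)\le N-1$ (e.g.\ $N=4$, $M=2$) this additive term cannot be made $\le \tfrac12(\B-\Bp)\|x\|^{N-1}$ however small $\r$ is. Second, your final absorption of the $(yx)$-contribution $\delta^{-1}\OO(\|x\|^{M})$ ``divided by $\|x\|^{N-1}$'' fails for the same reason: $\|x\|^{M}/\|x\|^{N-1}=\|x\|^{M-N+1}$ does not tend to $0$ (and blows up if $M<N-1$) as $x\to 0$ when $M\le N-1$.

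The cure, which is what the paper's proof does, is to keep the estimate row-wise to the end and, in the $y$-row, not to throw away the quantitative gain coming from $\Bq$: the $(yy)$-block gives $\|\Id-D_yq(x,0)\|\le 1-\Bq\|x\|^{M-1}$, and every other $y$-row error (the $\delta^{-1}\OO(\|x\|^{M})$ and $\OO(\|x\|^{M})$ terms, and the quadratic corrections, which in a blockwise inversion land in the $y$-row as $\OO(\|x\|^{2(M-1)})$) is smaller than $\|x\|^{M-1}$ by a factor $\OO(\delta^{-1}\|x\|)$. Choosing, say, $\r^{1/2}\delta^{-1}<1$ one arrives at the bound
\[
\|(D\tilde{\FT})^{-1}(\Klt{\leq}(x))\|\le \max\bigl\{\,1+(\Bp+\OO(\r)+\OO(\delta))\|x\|^{N-1},\;1-(\Bq+\OO(\r^{1/2}))\|x\|^{M-1}\bigr\},
\]
after which the comparison with $1+\B\|x\|^{N-1}$ is essentially your case discussion: trivial for $M<N$ (since $\Bq>0$), via $|\Bq|<N\ap\le\Bp<\B$ for $M=N$, and by power counting ($\|x\|^{M-1}=o(\|x\|^{N-1})$) for $M>N$. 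This last case also corrects your parenthetical claim that $\Bq<0$ can only occur when $M=N$: H2 imposes no sign condition on $\Bq$ when $M>N$, but there smallness of $\r$ suffices.
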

\begin{proof}
The proof of this lemma is analogous to Lemma~4.5 in
\cite{BFdLM2007}. However, we sketch it. Let $\r>0$ be such that
$\r^{1/2} \delta^{-1} <1$. Taking into account the above
considerations about the scaling,
the norm of the matrix $(D\tilde{\FT})^{-1}(\tilde{\Kl{\leq}}(x))$
is
$$
\Vert (D\tilde{\FT})^{-1}(\tilde{\Kl{\leq}}(x)) \Vert \leq \max\{1+
(\Bp + \OO(\r)+\OO(\delta)) \Vert x \Vert^{N-1}, 1-(\Bq +
\OO(\delta^{-1}\r))\Vert x \Vert^{M-1}\}.
$$
Recall that we are using in $\RR^{n}$ the norm given
in~\eqref{defnormxy}. Since $\r^{1/2} \delta^{-1}<1$, taking
$\r,\delta$ small enough, the constant $\B$ in~\eqref{KAKBB}
satisfies
$$
\Vert (D\tilde{\FT})^{-1}(\tilde{\Kl{\leq}}(x)) \Vert \leq \max\{1+
\B \Vert x \Vert^{N-1}, 1-(\Bq + \OO(\r^{1/2}))\Vert x \Vert^{M-1}\}.
$$
To obtain the result, we need to check that $\B\Vert x \Vert^{N-1}
\geq -(\Bq + \OO(\r^{1/2}))\Vert x \Vert^{M-1}$. If $N\neq M$, the
result follows from H2 and the smallness of~$\r$. The case $N=M$,
follows from  $-\Bq + \OO(\r^{1/2}) < N\ap + \OO(\r^{1/2})\leq \Bp +
\OO(\r^{1/2})$,  by~H2 and Lemma~\ref{defKapwell}. Again taking $\r$
small enough, we are done.
\end{proof}
From now on, we suppress the ``tilde'' from the scaled functions.

We fix $\delta,\r>0$ small enough and $\ka,\kb,\B$ in \eqref{KAKBB}
such that the conclusions of Lemma~\ref{Rlemma} applied to $R$ and
Lemma~\ref{lem:weakexpansion} hold true.

\subsubsection{Weak contraction of the nonlinear terms}\label{subsecweakcontraction}

Since the fixed point is parabolic there is no contraction from the
linear part of the map at the point. In the following lemma we
measure the contraction provided by the nonlinear terms.

\begin{lemma}
\label{lem:weakcontraction} Let $V_k\subset \Vr$ be the sets defined
in~\eqref{defVk}. There exists a constant $C>0$, depending only on
$\delta,\r$ and $\ell$ (which are fixed a priori), such that for any
$k \ge 0$, $x\in V_k$ and $i\ge 0$
\begin{align}
\label{bound:weakcontraction1} \prod_{m=0}^i \|
(D\FT)^{-1}(\Kl{\leq}( R^m(x)))\| & \le C \left(
\frac{u+k+i}{u+k}\right)^{\a \B\ka^{-1}}, \\
\label{bound:weakcontraction2}
\| D[(D\FT)^{-1} \circ \Kl{\leq}](x)\| & \le C (u+k)^{-\a(L-2)}, \\
\label{bound:weakcontraction3} \|DR^i(x)\| \le \prod_{m=0}^{i-1}\| D
R \circ  R^m(x)\| & \le  C \left(
\frac{u+k}{u+k+i} \right)^{\a \Ap  \kd^{-1}}. \\
\intertext{Finally, if $\Ap<\kd$}
\label{bound:weakcontraction4Ap<bp} \| D^2  R^i(x)\| & \le
C(u+k+i)^{\alpha} \left( \frac{u+k}{u+k+i} \right)^{2\a \Ap
\kd^{-1}}
\\ \intertext{and in the case $\Ap > \kd=\kb$}
\label{bound:weakcontraction4Ap>bp} \| D^2  R^i(x)\|  &\le
C(u+k)^{\alpha} \left( \frac{u+k}{u+k+i} \right)^{\a \Ap \kd^{-1}}.
\end{align}
\end{lemma}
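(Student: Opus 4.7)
The plan is to prove the five inequalities in succession by combining the quantitative control of $\|R^m(x)\|$ furnished by Lemma~\ref{Rlemma}(2) with the operator-norm bound of Lemma~\ref{lem:weakexpansion} and the definitions~\eqref{defconstants}--\eqref{KAKBB} of the constants. The underlying mechanism in every case will be the elementary asymptotic
\[
\sum_{m=0}^{i} \frac{1}{u+k+m} = \log\frac{u+k+i}{u+k} + O(1),
\]
which converts a product $\prod_{m}(1 + c/(u+k+m))$ into a power-law factor $\left(\frac{u+k+i}{u+k}\right)^{c}$ via $\log(1+t)\le t$ and exponentiation, uniformly in $k$ since the error terms in Lemma~\ref{Rlemma}(2) are $1+O(k^{-\beta})$.

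For~\eqref{bound:weakcontraction1} I would invoke Lemma~\ref{lem:weakexpansion} along the orbit $\{R^m(x)\}$, which by Lemma~\ref{Rlemma}(2) lies in $V_{k+m}$. Each factor is then bounded by $1 + \B \|R^m(x)\|^{N-1} \le 1 + \alpha\B/(\ka(u+k+m))(1+o(1))$, and the telescoping asymptotic above yields the exponent $\alpha\B/\ka$ in the claimed form.

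For~\eqref{bound:weakcontraction2} I would differentiate to obtain $D[(D\FT)^{-1}\circ \Kl{\leq}](x) = D(D\FT)^{-1}(\Kl{\leq}(x))\cdot D\Kl{\leq}(x)$, note that $D\Kl{\leq}$ is bounded by hypothesis~(c) of Theorem~\ref{maintheorem}, and observe that, since the first non-vanishing nonlinear terms of $F$ in~\eqref{defF} have degrees $N$ and $M$, the second derivative of $\FT$ at $\Kl{\leq}(x) = (x,0)+O(\|x\|^{2})$ is of order $\|x\|^{L-2}$ with $L=\min\{N,M\}$. Converting via $\|x\|^{L-2}\le C(u+k)^{-\alpha(L-2)}$ (Lemma~\ref{Rlemma}(2)) yields the claim. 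For~\eqref{bound:weakcontraction3} I would write $DR^i(x) = \prod_{m=0}^{i-1} DR(R^m(x))$; since $DR(x) = \Id + D_{x}p(x,0) + O(\|x\|^{N})$ and the definitions of $\Ap,\kd$ are precisely tailored so that $\|DR(y)\|\le 1 - \alpha\Ap/(\kd(u+k+m))(1+o(1))$ along the orbit (with the sign of $\Ap$ absorbed by the choice of $\kd$), the same telescoping argument produces the exponent $\alpha\Ap/\kd$.

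The main obstacle is the pair of second-derivative bounds~\eqref{bound:weakcontraction4Ap<bp}--\eqref{bound:weakcontraction4Ap>bp}, where the scaling of $D^2R^i$ depends on the sign of $\Ap-\kd$. I would expand via the chain rule
\[
D^2 R^i(x) = \sum_{j=0}^{i-1} DR^{i-1-j}(R^{j+1}(x)) \cdot D^2 R(R^j(x)) \cdot (DR^j(x))^{\otimes 2},
\]
use $\|D^2 R(y)\|\le C\|y\|^{N-2}$ (from the homogeneity of $p$ of degree $N$ together with the Taylor-remainder estimate $R(y)-y-p(y,0)=O(\|y\|^{N+1})$), and apply~\eqref{bound:weakcontraction3} to both $DR^{i-1-j}$ and $DR^j$. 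After collecting the $(u+k+\cdot)$-factors, the estimate reduces to controlling
\[
C(u+k)^{2\alpha\Ap/\kd}(u+k+i)^{-\alpha\Ap/\kd}\sum_{j=0}^{i-1}(u+k+j)^{\alpha-1-\alpha\Ap/\kd}.
\]
When $\Ap<\kd$ the exponent in the sum lies in $(-1,0)$, so the sum is comparable to $(u+k+i)^{\alpha-\alpha\Ap/\kd}$ and this produces~\eqref{bound:weakcontraction4Ap<bp}; when $\Ap>\kd=\kb$ the exponent is smaller than $-1$, so the sum converges and is dominated by its first term $\sim(u+k)^{\alpha-\alpha\Ap/\kd}$, producing~\eqref{bound:weakcontraction4Ap>bp}. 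The borderline case $\Ap=\kd$ is explicitly excluded by the choice of $\ka,\kb$ made just above~\eqref{KAKBB}, which is exactly what makes the two clean regimes in the statement possible.
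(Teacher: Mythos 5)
Your proposal is correct and takes essentially the same route as the paper: estimates along the orbit from Lemma~\ref{Rlemma}(2) combined with Lemma~\ref{lem:weakexpansion}, converted into power laws by summing logarithms, for~\eqref{bound:weakcontraction1} and~\eqref{bound:weakcontraction3}, the same direct computation for~\eqref{bound:weakcontraction2}, and for $D^2R^i$ a chain-rule decomposition whose summand and case analysis on $\Ap<\kd$ versus $\Ap>\kd$ coincide with the paper's (which instead differentiates the product $\prod_m DR\circ R^m$ and divides out one factor, using $\|DR\circ R^m\|\ge 1/2$). The only cosmetic imprecisions are that when $\Ap\le 0$ the exponent $\a-1-\a\Ap\kd^{-1}$ need not lie in $(-1,0)$ — all that is used is that it exceeds $-1$, which is exactly $\Ap<\kd$ — and that in the case $\Ap>\kd$ the convergent sum is bounded by $C(u+k)^{\a-\a\Ap\kd^{-1}}$ by tail comparison rather than literally by its first term; neither affects the argument.
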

\begin{remark}
The proof of this lemma is analogous to the one of Lemma~4.6 in
\cite{BFdLM2007} using the estimates for $\Vert  R^i(x) \Vert$ given
in Lemma~\ref{Rlemma}. However, the exponents in
inequalities~\eqref{bound:weakcontraction1},
\eqref{bound:weakcontraction3}--\eqref{bound:weakcontraction4Ap>bp}
are different from their counterpart in \cite{BFdLM2007} due to the
fact that here the invariant manifold is not one dimensional. In
particular, the constant analogous to $\Ap \kd^{-1}$ was exactly $N$
in~\cite{BFdLM2007}. We also are forced to separate the cases $\Ap<
\kd$ and $\Ap > \kd$ in the bound of $\Vert D^2 R^j(x)\Vert$.
\end{remark}
\begin{proof}
We begin with~\eqref{bound:weakcontraction1}. By Lemma~\ref{Rlemma},
if $x\in V_k$, then $ R^m(x)\in V_{k+m}$. Therefore, using
Lemma~\ref{lem:weakexpansion} and  item~(2) of Lemma~\ref{Rlemma} we
have that
$$
\Vert (D\FT)^{-1}(\Kl{\leq}( R^{m}(x)))\Vert \leq 1 + \frac{\a\B
}{\ka(u+k+m)}  \big (1+ \OO((k+m)^{-\beta}) \big ),
$$
for $x\in V_k$. Then, since
\begin{align*}
 \sum_{m=0}^i \log \left (\|
(D\FT)^{-1}(\Kl{\leq}( R^m(x)))\|\right )& \leq \sum_{m=0}^{i} \log \left (1+\frac{\a\B}{\ka(u+k+m)}  \big (1+ \OO((k+m)^{-\beta}) \big )\right )\\
&=\frac{\a\B}{\ka}\sum_{m=0}^i \frac{1}{u+k+m}\big (1+ \OO((k+m)^{-\beta}) \big ) \\
&=\frac{\a\B}{\ka} \left[\log \left (\frac{u+k+i}{u+k}\right )
+\OO(k^{-\beta}) \right] ,
\end{align*}
and~\eqref{bound:weakcontraction1} is proven.

Bound~\eqref{bound:weakcontraction2} is an straightforward
computation. To prove estimate~\eqref{bound:weakcontraction3} we
first notice that since $R(x)=x+p(x,0)+\OO(\Vert x \Vert^{N+1})$, by
Lemma~\ref{Rlemma}, if $x\in V_k$,
$$
\Vert DR(x) \Vert \leq 1 - \frac{ \a \Ap }{\kd (u+k)} +
\frac{C}{(u+k)^{1+\beta}}.
$$
Then, again using Lemma~\ref{Rlemma},
$$
\Vert D R^i(x) \Vert \leq \prod_{m=0}^{i-1}\| D R \circ  R^m(x)\|
\leq \prod_{m=0}^{i-1} \left (1-\frac{\a\Ap }{\kd(u+k+m)} +
\frac{C}{(u+k+m)^{1+\beta}}\right ).
$$
Finally, estimate~\eqref{bound:weakcontraction3} follows from the
fact that
$$
\sum_{m=0}^{i-1} \log \left(1-\frac{\a \Ap}{\kd(u+k+m)} +
\frac{C}{(u+k+m)^{1+\beta}}\right ) \leq \frac{ \a\Ap}{\kd}\log
\left (\frac{u+k}{u+k+i}\right) +  \frac{C}{(u+k)^{1+\beta}}.
$$

To bound $\Vert D^2 R^i(x)\Vert$ we first note that
\[
\|D^2  R^i (x) \| = \| D(\prod_{m=0}^{i-1} D R \circ  R^m) \| \le
\sum_{m=0}^{i-1} \|D^2  R \circ  R^m \|\|D R^m\|\prod_{l=0}^{i-1} \|
D R \circ  R^l\| \|D R \circ  R^m\|^{-1}.
\]
Then, taking into account that $\|D R \circ  R^m(x)\| \ge 1/2$ and
that,
\[
\| D^2  R ( R^m(x))\| \le C \| R^m(x)\|^{N-2},
\]
using again~\ref{seconditem:Rlema} of Lemma~\ref{Rlemma},  we have
that
\[
\begin{aligned}
\|D^2  R^i (x) \| \le & C\prod_{l=0}^{i-1} \| D R \circ  R^l\|
\sum_{m=0}^{i-1} \| R^m(x)\|^{N-2}\|D R^m\| \\
\le & C \prod_{l=0}^{i-1} \| D R \circ  R^l\| \sum_{m=0}^{i-1}
\frac{(u+k)^{\a\Ap  \kd^{-1}}}{(u+k+m)^{ \a\Ap \kd^{-1}+\a(N-2)}}.
\end{aligned}
\]
Now we distinguish two cases. First, when $\Ap > \kd=\kb$, we have
$\a \Ap \kd^{-1}+\a(N-2)>1$ and then
$$
\sum_{m=0}^{i-1} \frac{(u+k)^{\a \Ap \kd^{-1}}}{(u+k+m)^{\a\Ap
\kd^{-1}+\alpha(N-2)}} \leq C (u+k)^{\a}.
$$
This bound together with~\eqref{bound:weakcontraction3},
implies~\eqref{bound:weakcontraction4Ap>bp} in this case. On the
other hand, when $\Ap<\kd$,
$$
\sum_{m=0}^{i-1} \frac{(u+k)^{\a\Ap  \kd^{-1}}}{(u+k+m)^{\a\Ap
\kd^{-1}+\a(N-2)}} \leq C (u+k)^{\a} (u+k+i)^{\a(1-\Ap\kd^{-1})}
$$
and, using again~\eqref{bound:weakcontraction3}, we
get~\eqref{bound:weakcontraction4Ap<bp}.
\end{proof}

\subsubsection{Operators for higher order derivatives and their inverses}\label{subsecoperatorsS}
Now we proceed to rewrite equation \eqref{eqFTKfulltilde}, which we
recall here
\begin{equation}
\label{eqFTKfulloperator}  F \circ (\Kl{\leq} + \Kl{>}) - (\Kl{\leq}
+ \Kl{>}) \circ  R = 0,
\end{equation}
 as a fixed point equation. We skip the symbol $\tilde{}$ of our notation, although we work
with the rescaled map. That is, since $\Kl{\leq}$ satisfies
\eqref{eqFTKltilde}: $ \FT \circ \Kl{\leq} - \Kl{\leq} \circ R =
\TT{\ell}$, $\Kl{>}$ has to satisfy
\begin{multline*}
\big (D \FT \circ \Kl{\leq} \big ) \Kl{>} - \Kl{>} \circ  R = \\
-\TT{\ell} - \FR{\ell} \circ \big (\Kl{\leq} + \Kl{>}\big) - \FT
\circ \big(\Kl{\leq} + \Kl{>}\big) + \FT \circ \Kl{\leq} + \big
(D\FT \circ \Kl{\leq} \big ) \Kl{>}.
\end{multline*}
To shorten the notation, we introduce the operators
\begin{equation}\label{defL0}
\L{0}( S) = \big (D \FT \circ \Kl{\leq} \big )  S -  S \circ  R
\end{equation}
and
\begin{equation}
\label{defFcalparam} \FF(K) = -\TT{\k} - \FR{\k} \circ \big
(\Kl{\leq} + K\big) - \FT \circ \big(\Kl{\leq} + K\big) + \FT \circ
\Kl{\leq} + \big (D\FT \circ \Kl{\leq} \big ) K.
\end{equation}
Then equation~\eqref{eqFTKfulloperator} for~$\Kl{>}$ can be
rewritten as
\begin{equation}
\label{eq:Kmes} \L{0}(\Kl{>}) = \FF(\Kl{>}).
\end{equation}
The formal inverse of $\L{0}$ is
\begin{equation}\label{defS0param}
\S{0}( T) = \sum_{i=0}^{\infty} \left [ \prod_{m=0}^i \big (D
\FT)^{-1} \circ \Kl{\leq} \circ  R^{m} \right ]  T \circ  R^i
\end{equation}
and consequently, we can formally write equation \eqref{eq:Kmes} as
the fixed point equation
\begin{equation}\label{fpeqfirst}
\Kl{>} = \S{0} \circ \FF (\Kl{>}).
\end{equation}
Following the same arguments as the ones in the proof of Lemma 4.9
in \cite{BFdLM2007}, one can check that the operator $\S{0} :
\X{0}{\ell} \to  \X{0}{\ell-N+1}$ is continuous. Therefore, the
operator $\L{0}$, introduced in~\eqref{defL0}, is suitable to prove
the existence of a continuous invariant manifold. In order to obtain
the higher order derivatives, we  introduce the operators
\begin{equation*}
\L{j}( S) = \big (D \FT \circ \Kl{\leq} \big )  S -  S \circ  R (D
R)^j, \qquad j\ge 1.
\end{equation*}
The key property is that if $ S$ is a $\CC^{\nu}$ solution of
$\L{0}( S) = T$, with $T$ a $\CC^{\nu}$ function, then $D^j  S$ is a
solution of
\begin{equation*}
\L{j}(H) = T^j, \qquad 0 \le j \le \nu ,
\end{equation*}
where $T^j$ is defined by the recurrence relation
\begin{equation*}
\begin{aligned}
T^0 & = T, \\
T^{j+1} & = DT^j - D(D\FT \circ \Kl{\leq} ) D^j  S + j D^j  S \circ
 R (D R)^{j-1} D^2  R.
\end{aligned}
\end{equation*}

Recall the parameters $L = \min\{N,M\}$ and $\eta = 1+N-L$ defined
in \eqref{defeta}. The following lemma is analogous to Lemma~4.7
in~\cite{BFdLM2007}, with an appropriate change in the hypothesis.
\begin{lemma}
\label{lem:Ljonetoone} Let $\k>N-1 +\B\ka^{-1}$ and $j\geq 0$ be
such that
$$
\k-N+1 -\frac{\B}{\ka} - j\left (\eta-\frac{\Ap}{\kd} \right)>0.
$$
Then, the operators $\L{j}: \X{0}{\k-N+1-j\eta} \to C^0$, $j\ge 0$,
are well defined, continuous and one to one.
\end{lemma}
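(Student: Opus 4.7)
My plan is to verify the three assertions---well-definedness, continuity as a linear operator, and injectivity---in that order. For the first two it suffices to bound each of the two pieces of
\begin{equation*}
\L{j}(S)(x) = (D\FT\circ\Kl{\leq})(x)\,S(x) - S(R(x))\cdot(DR(x))^{\otimes j}
\end{equation*}
uniformly on $\Vr$ by a multiple of $\|S\|_{0,\k-N+1-j\eta}$. The factor $(D\FT\circ\Kl{\leq})(x)$ is continuous and bounded on $\Vr$, and $\|DR(x)\|\le 2$ for $\r$ small, while Lemma~\ref{Rlemma}(1) gives $\|R(x)\|\le \|x\|$. Since $\|S(x)\|$ and $\|S(R(x))\|$ are both controlled by $\|S\|_{0,\k-N+1-j\eta}\|x\|^{\k-N+1-j\eta}$ up to a constant, $\L{j}$ maps into continuous functions and is a bounded linear operator, hence continuous.

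For injectivity, which is the heart of the argument, assume $\L{j}(S)=0$, i.e.
\begin{equation*}
(D\FT\circ\Kl{\leq})(x)\,S(x) = S(R(x))\,(DR(x))^{\otimes j}.
\end{equation*}
Inverting $D\FT\circ\Kl{\leq}$ and iterating this relation $i$ times, with the chain rule collapsing $\prod_{m=0}^{i-1}(DR(R^m(x)))^{\otimes j}$ into $(DR^i(x))^{\otimes j}$, yields
\begin{equation*}
S(x) = \Bigl[\prod_{m=0}^{i-1}\bigl((D\FT)^{-1}\circ\Kl{\leq}\circ R^m\bigr)(x)\Bigr]\,S(R^i(x))\,\bigl(DR^i(x)\bigr)^{\otimes j}.
\end{equation*}
Fix $x\in V_k$ and apply the three estimates of Lemma~\ref{lem:weakcontraction}. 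The leftmost product is controlled by \eqref{bound:weakcontraction1}, contributing $C((u+k+i)/(u+k))^{\a\B/\ka}$; the rightmost factor, by $j$-fold application of \eqref{bound:weakcontraction3}, contributes $C((u+k)/(u+k+i))^{j\a\Ap/\kd}$; finally $\|S(R^i(x))\|\le \|S\|_{0,\k-N+1-j\eta}\|R^i(x)\|^{\k-N+1-j\eta}$, combined with $\|R^i(x)\|\le C(u+k+i)^{-\a}$ from Lemma~\ref{Rlemma}(2), gives a factor $C(u+k+i)^{-\a(\k-N+1-j\eta)}$.

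Multiplying the three contributions, the dependence on $i$ of the resulting bound on $\|S(x)\|$ is proportional to $(u+k+i)^{-\a\gamma}$ with
\begin{equation*}
\gamma = \k-N+1-\frac{\B}{\ka}-j\Bigl(\eta-\frac{\Ap}{\kd}\Bigr),
\end{equation*}
which is precisely the quantity the hypothesis requires to be strictly positive. Letting $i\to\infty$ forces $S(x)=0$; since the sets $V_k$ cover $\overline{\Vr}\setminus\{0\}$ by Lemma~\ref{Rlemma}(2), we conclude $S\equiv 0$ and injectivity follows. The delicate point is this three-way balance of exponents: without the stated hypothesis, either the weak expansion arising from $(D\FT)^{-1}\circ\Kl{\leq}$ would dominate the decay of $\|S(R^i(x))\|$, or the $j$-fold contraction from $DR$ would be too weak to keep $\gamma$ positive, and the iteration argument would collapse.
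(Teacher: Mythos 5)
Your proof is correct and follows essentially the same route as the paper: iterate the homogeneity relation $S = (D\FT\circ\Kl{\leq})^{-1}\,S\circ R\,(DR)^j$, then combine the estimates \eqref{bound:weakcontraction1} and \eqref{bound:weakcontraction3} of Lemma~\ref{lem:weakcontraction} with the decay of $\|R^i(x)\|$ from Lemma~\ref{Rlemma}(2), so that the hypothesis makes the net exponent $\gamma$ positive and forces $S\equiv 0$ on each $V_k$. The well-definedness and continuity part is handled at the same (brief) level of detail as in the paper, so nothing further is needed.
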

\begin{proof}
Since $ R(x) = x+p(x,0)+ \OO(\|x\|^{N+1})$, $N\ge 2$, and $\ap>0$,
$\|R(x)\|\le \|x\|$ and then $\L{j}$ is well defined and continuous.

Let $j\ge 0$ and $ S \in  \X{0}{\k-N+1-j\eta}$ be such that $\L{j}(
S) = 0$, that is, $ S = \big (D \FT \circ \Kl{\leq}  \big )^{-1} S
\circ  R (D R)^j$, or, using this condition iteratively,
\begin{equation*}
S = \left( \prod_{m=0}^i \big (D \FT \big )^{-1}\circ \Kl{\leq}
\circ  R^{m} \right)  S \circ  R^{i+1} (D R^{i+1})^j, \qquad i \ge
0.
\end{equation*}
Now, using that $\Vert S\circ R^{i+1}(x) \Vert \leq C \Vert S
\Vert_{0,\k-N+1-j\eta} \Vert R^{i+1}(x) \Vert^{\k-N+1-j\eta}$ and
Lemmas~\ref{Rlemma} and~\ref{lem:weakcontraction}, we obtain that,
for $x \in V_k$,
$$
\Vert S(x) \Vert \leq C \Vert S \Vert_{0,\k-N+1-j\eta}
\frac{(u+k)^{\a(j\Ap\kd^{-1} -
\B\ka^{-1})}}{(u+k+i)^{\a(\k-N+1-\B\ka^{-1}-j(\eta-\Ap\kd^{-1}))}}.
$$
By hypothesis, the right hand side of the above expression tends to~$0$ when $i$ tends to~$\infty$, which implies that $S=0$ and,
consequently, that $\L{j}$ is one to one.
\end{proof}

A formal inverse of the operator~$\L{j}$ obtained recursively from
$\L{j} ( S) = T$ is given by the formula
\begin{equation}
\label{def:Sj} \S{j} (T) = \sum_{i\ge 0} \left( \prod_{m=0}^i
(D\FT)^{-1} \circ \Kl{\leq} \circ  R^m \right) T \circ  R^i \cdot
(DR^i)^j.
\end{equation}
Notice that $\S{j}$ acts on $j$-linear maps. If this formula is
absolutely convergent, it is a simple computation to check that
$\L{j}(\S{j}(T)) = T$.

In the next lemma, equivalent to Lemma~4.9 in \cite{BFdLM2007} with
adjusted hypotheses, we check that $\S{j}$ is indeed well defined
and bounded between appropriate spaces.

\begin{lemma}
\label{lem:Skonlowregularity} Assume that $\k> N-1+\B\ka^{-1}$ and
that $j\geq 0$ satisfies
$$
\k  -N+1 - \frac{\B}{\ka} -j \left (\eta-\frac{\Ap}{\kd}\right )>0.
$$
Then the operator $\S{j}:  \X{0}{\k-j\eta} \to  \X{0}{\k-j\eta-N+1}$
is well defined and bounded. Also we have $\L{j} \circ \S{j} = \Id$
on $ \X{0}{\k-j\eta}$.

Moreover, if $\k>\lo$, with $\lo$ defined in \eqref{KAKBB} and
$j\geq 0$ is such that
$$
\k-\lo -j \left (\eta-\frac{\Ap}{\kd}\right )>0,
$$
the operator $\S{j}:  \X{1}{\k-j\eta} \to  \X{1}{\k-j\eta-N+1}$ is
well defined and
\[
D\left( \S{j}(T)\right) = \S{j+1}(\tilde T),\qquad \text{if $ T\in
\X{1}{\k-j\eta}$,}
\]
where
\[
\tilde T = DT - D(D\FT \circ \Kl{\leq}) \S{j}(T) + j \S{j}(T) \circ
 R(D R)^{j-1} D^2  R.
\]
\end{lemma}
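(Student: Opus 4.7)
The plan is to establish all three assertions by direct estimation of the series~\eqref{def:Sj}, using the annular decomposition $\overline{\Vr}\setminus\{0\}=\bigcup_{k\ge 0}V_k$ of Section~\ref{decomposition} together with the bounds of Lemma~\ref{lem:weakcontraction}. For $T\in\X{0}{\k-j\eta}$ and $x\in V_k$, I combine $\|T(R^i(x))\|\le \|T\|_{0,\k-j\eta}\|R^i(x)\|^{\k-j\eta}$, the quantitative bound $\|R^i(x)\|^{N-1}\le C(u+k+i)^{-1}$ from Lemma~\ref{Rlemma}, and~\eqref{bound:weakcontraction1}, \eqref{bound:weakcontraction3}, to bound the $i$-th term of~\eqref{def:Sj} by
\[
C\|T\|_{0,\k-j\eta}\,(u+k)^{j\a\Ap/\kd-\a\B/\ka}\,(u+k+i)^{-\a[\k-\B/\ka-j(\eta-\Ap/\kd)]}.
\]
The hypothesis $\k-N+1-\B/\ka-j(\eta-\Ap/\kd)>0$ is precisely what makes the exponent of $(u+k+i)$ strictly greater than $1=\a(N-1)$. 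Summing over $i$ (comparing to an integral) collapses the geometric factors to $C\|T\|_{0,\k-j\eta}(u+k)^{-\a(\k-j\eta-N+1)}$; since $\|x\|\sim (u+k)^{-\a}$ on $V_k$, this is the required bound for $\S{j}(T)\in \X{0}{\k-j\eta-N+1}$. The identity $\L{j}\circ\S{j}=\Id$ then follows from a telescoping argument: in $(D\FT\circ\Kl{\leq})(x)\S{j}(T)(x)-\S{j}(T)(R(x))(DR(x))^j$, the chain rule $DR^{i+1}(x)=DR^i(R(x))DR(x)$ and the reindexing $m\mapsto m+1$ cancel all contributions with $i\ge 1$, leaving only the $i=0$ term $T(x)$.

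For the derivative formula, I formally differentiate the identity $(D\FT\circ\Kl{\leq})\S{j}(T)-\S{j}(T)\circ R(DR)^j=T$ and rearrange to get $\L{j+1}(D\S{j}(T))=\tilde T$ with $\tilde T$ as in the statement. It then suffices to check that (a)~$\tilde T\in\X{0}{\k-(j+1)\eta}$ and (b)~the hypothesis of part~1 holds with $j+1$ in place of $j$; by the injectivity of $\L{j+1}$ (Lemma~\ref{lem:Ljonetoone}) and the boundedness of $\S{j+1}$ just established, this forces $D\S{j}(T)=\S{j+1}(\tilde T)$. For~(a) I use $\|DT(x)\|\le C\|x\|^{\k-(j+1)\eta}$ from $T\in\X{1}{\k-j\eta}$, together with $\|D(D\FT\circ\Kl{\leq})(x)\|,\|D^2R(x)\|\le C\|x\|^{N-2}$ and the bound on $\S{j}(T)$ already obtained; the two remaining terms of $\tilde T$ are then $O(\|x\|^{\k-j\eta-1})$, which dominates $\|x\|^{\k-(j+1)\eta}$ because $\eta\ge 1$. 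For~(b), when $\eta-\Ap/\kd\ge 0$ the assumption $\k-\lo-j(\eta-\Ap/\kd)>0$ unpacks, via the definition of $\lo$ in~\eqref{KAKBB}, to exactly $\k-N+1-\B/\ka-(j+1)(\eta-\Ap/\kd)>0$; when $\eta-\Ap/\kd<0$ the required inequality is automatic from $\k>\lo=N-1+\B/\ka$. Term-by-term differentiation of~\eqref{def:Sj} is legitimate because the original and the formally differentiated series both converge uniformly on compact subsets of $\Vr\setminus\{0\}$, by the same estimates.

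The main technical obstacle is the exponent bookkeeping in step~(a): the terms $D(D\FT\circ\Kl{\leq})\S{j}(T)$ and $\S{j}(T)\circ R(DR)^{j-1}D^2R$ have order only $\|x\|^{\k-j\eta-1}$, so the argument closes purely by exploiting $\eta=1+N-L\ge 1$, with equality when $L=N$. One must also remember that Lemmas~\ref{lem:weakcontraction} and~\ref{Rlemma} are to be applied to the rescaled map introduced in Section~\ref{sec:scaling}, with the constants $\ka<\ap$, $\kb>\bp$, and $\B>\Bp$ fixed in~\eqref{KAKBB}, so that the dependence of $\lo$ on these parameters is the one appearing in the statement.
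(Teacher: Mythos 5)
The first part of your argument (the $\X{0}{\k-j\eta}\to\X{0}{\k-j\eta-N+1}$ bound via the $V_k$ decomposition, Lemma~\ref{Rlemma} and Lemma~\ref{lem:weakcontraction}, plus the telescoping identity $\L{j}\circ\S{j}=\Id$) is correct and is exactly the paper's computation. The logical frame of your second part — differentiate the identity $\L{j}(\S{j}(T))=T$, note that $\S{j+1}(\tilde T)\in\X{0}{\k-(j+1)\eta-N+1}$ by (a) and (b), and invoke the injectivity of $\L{j+1}$ from Lemma~\ref{lem:Ljonetoone} — is also the paper's. (Your unpacking of the hypothesis $\k-\lo-j(\eta-\Ap/\kd)>0$ into the part-1 hypothesis at level $j+1$ is right; one small slip: $\|D(D\FT\circ\Kl{\leq})(x)\|=\OO(\|x\|^{L-2})$ by~\eqref{bound:weakcontraction2}, not $\OO(\|x\|^{N-2})$ — harmless here, since with the correct exponent the corresponding term of $\tilde T$ is $\OO(\|x\|^{\k-j\eta-\eta})=\OO(\|x\|^{\k-(j+1)\eta})$, which still suffices because $\eta=1+N-L$.)

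The genuine gap is the step you dispose of in one sentence: ``term-by-term differentiation is legitimate because the differentiated series converges uniformly on compact subsets of $\Vr\setminus\{0\}$, by the same estimates.'' First, uniform convergence on compacta away from the origin only gives differentiability of $\S{j}(T)$ there; it does not give the \emph{weighted} bound $\|D\S{j}(T)(x)\|\le C\|x\|^{\k-(j+1)\eta-N+1}$ uniformly up to $x=0$, i.e.\ membership of $D\S{j}(T)$ in $\X{0}{\k-(j+1)\eta-N+1}$. Without that membership the injectivity argument cannot be applied (Lemma~\ref{lem:Ljonetoone} distinguishes solutions only within that weighted space, and its proof uses the decay rate along the orbits $R^i(x)\to 0$), and the final conclusion $\S{j}(T)\in\X{1}{\k-j\eta-N+1}$ is precisely this weighted bound. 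Second, the estimates are \emph{not} ``the same'' as in part~1: differentiating~\eqref{def:Sj} produces, besides $\S{j+1}(DT)$, a term with $j(DR^i)^{j-1}D^2R^i$ and a term in which the product of resolvents is differentiated, giving an inner sum over $m\le i$ of $D[(D\FT)^{-1}\circ\Kl{\leq}]\circ R^m\cdot DR^m$. Estimating these requires the additional bounds~\eqref{bound:weakcontraction2}, \eqref{bound:weakcontraction4Ap<bp}, \eqref{bound:weakcontraction4Ap>bp} of Lemma~\ref{lem:weakcontraction}, with separate treatment of the cases $\Ap<\kd$ versus $\Ap>\kd$ and $\Ap/\kd\lessgtr\eta$ (the inner sum over $m$ can grow like a power of $u+k+i$ and eats into the exponent of the outer sum); this is exactly where the stronger hypothesis $\k-\lo-j(\eta-\Ap/\kd)>0$ is actually consumed, and it is the bulk of the paper's proof (the estimates of the three series $S_1,S_2,S_3$). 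Your proposal as written never performs these estimates, so the second assertion of the lemma is not established.
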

\begin{proof}
Let $T\in  \X{0}{\k-j\eta}$ and $S=\S{j}(T)$. Following the same
lines as the ones in the proof of Lemma~4.9 in \cite{BFdLM2007}, a
direct computation shows that, for $x\in V_k$,
$$
\Vert S(x)\Vert \leq C \Vert T \Vert_{0,\k-j\eta} \sum_{i\geq 0}
\frac{(u+k)^{\a(j\Ap \kd^{-1} - \B\ka^{-1})}}{(u+k+i)^{\a(\k
-j(\eta-\Ap\kd^{-1}) -\B\ka^{-1})}}.
$$
Therefore, since by hypothesis $\a(\k -j(\eta-\Ap\kd^{-1})
-\B\ka^{-1})>1$,  if $x\in V_k$,
$$
\Vert S(x) \Vert \leq C \Vert T \Vert_{0,\k-j\eta}
(u+k)^{-\a(\k-j\eta-N+1)}\leq C \Vert x \Vert^{\k-j\eta-N+1} \Vert T
\Vert_{0,\k-j\eta}.
$$
Hence $\Vert S\Vert_{0,\k-j\eta-N+1} \leq \Vert
T\Vert_{0,\k-j\eta}$, that is, $\S{j}:  \X{0}{\k-j\eta} \to
\X{0}{\k-j\eta-N+1}$ is well defined and bounded. It also proves
that $\L{j} \circ \S{j} = \Id$ on $ \X{0}{\k-j\eta}$.

Following the proof of Lemma~4.9 in \cite{BFdLM2007}, we argue that,
if $\S{j}(T)$ is differentiable and its derivative belongs to $
\X{0}{\k-(j+1)\eta -N+1}$, then $D\S{j}(T) = \S{j+1}(\tilde T)$. The
trick is to check that both are solutions of the same equation
$\L{j+1}(H)=\tilde T$ belonging to $ \X{0}{\k-(j+1)\eta-N+1}$.
Indeed, first we note that if $T \in  \X{1}{\k-j\eta}$, then $\tilde
T \in  \X{0}{\k-(j+1)\eta}$ provided $DT \in \X{0}{\k-(j+1)\eta}$,
$D(D\FT \circ \Kl{\leq}) \in
 \X{0}{L-2}$, $D^2  R\in  \X{0}{N-2}$ and the definition of~$\eta$.
This implies that $\S{j+1}(\tilde T)\in  \X{0}{\k-(j+1)\eta-N+1}$.
It only remains to check that $D(\S{j}(T))$ is a solution of
$\L{j+1}(H)=\tilde T$ which can be proven by taking derivatives in
$\L{j}(\S{j}(T))=T$. Hence the uniqueness result,
Lemma~\ref{lem:Ljonetoone}, proves that $D(\S{j}(T)) =
\S{j+1}(\tilde T)$.

Now we prove that $\S{j}(T)$ is differentiable and belongs to~$
\X{0}{\ell-(j+1)\eta-N+1}$. In order to do so, we take derivatives
formally in~\eqref{def:Sj}. We have $D(\S{j}(T)) = S_1+S_2+S_3$,
where
\begin{equation*}
\begin{aligned}
S_1 = & \sum_{i\ge 0} \left(\prod_{m=0}^i (D\FT)^{-1}
\circ \Kl{\leq} \circ  R^m \right) D\T \circ  R^i (D R^i)^{j+1},\\
S_2 = & \sum_{i\ge 0} \left(\prod_{m=0}^i (D\FT)^{-1}
\circ \Kl{\leq} \circ  R^m \right) \T \circ  R^i j(D R^i)^{j-1} D^2 R^i,\\
S_3 = & \sum_{i\ge 0} \sum_{m=0}^i\left(\prod_{l =0}^{m-1}
(D\FT)^{-1} \circ \Kl{\leq} \circ  R^l \right) D\left((D\FT)^{-1}
\circ \Kl{\leq} \circ  R^m\right)
\\ & \times \left(\prod_{l =m+1}^{i}
(D\FT)^{-1} \circ \Kl{\leq} \circ  R^l \right)\T \circ  R^i (D
R^i)^{j},
\end{aligned}
\end{equation*}
and check that the above expressions are absolutely convergent,
belong to~$ \X{0}{\k-(j+1)\eta-N+1}$ and are bounded.

Since $DT\in
 \X{0}{\k-(j+1)\eta}$, then, by the first part of the lemma,  $S_1 = \S{j+1}(DT)$ belongs to
$ \X{0}{\k-(j+1)\eta-N+1}$ and we are done with $S_1$.

Next we deal with $S_2$. Let $x\in V_k$. Assume that $\Ap>\kd=\kb$.
Then, by Lemma~\ref{lem:weakcontraction}, we have that:
$$
\Vert S_2(x) \Vert \leq C  \Vert T \Vert_{1,\k-j\eta} \sum_{i\geq 0}
\frac{(u+k)^{\a(j\Ap\kd^{-1}+1-\B \ka^{-1})}}{(u+k+i)^{\a(\k -j(\eta
-\Ap\kd^{-1})-\B \ka^{-1})}}.
$$
Since $\k-j(\eta-\Ap\kd^{-1})-\B\ka^{-1} > N-1$, the sum is
convergent and we obtain
$$
\Vert S_{2}(x) \Vert \leq C\frac{ \Vert T \Vert_{1,\k-j\eta}
}{(u+k)^{\a(\k-j\eta -N)}} = C \Vert x \Vert^{\k-j\eta-N} \Vert T
\Vert_{1,\k-j\eta} \leq C\Vert x \Vert^{\k-(j+1)\eta -N+1}
\r^{\eta-1} \Vert T \Vert_{1,\k-j\eta}
$$
which implies that $S_2\in  \X{0}{\k-N+1-(j+1)\eta}$. Here we have
used that $\eta\geq 1$. If $\Ap<\kb$, then again by
Lemma~\ref{lem:weakcontraction},
$$
\Vert S_2(x) \Vert \leq C \Vert T \Vert_{1,\k-j\eta} \sum_{i\geq 0}
\frac{(u+k)^{\a((j+1)\Ap\kd^{-1}-\B \ka^{-1})}}{(u+k+i)^{\a(\k
-(j+1)(\eta -\Ap\kd^{-1})-\B \ka^{-1} +\eta-1)}}.
$$
Proceeding as in the previous case, one gets that $ \Vert S_{2}(x)
\Vert \leq K\Vert x \Vert^{\k-(j+1)\eta -N+1} \r^{\eta-1} \Vert T
\Vert_{1,\k-j\eta}$ and the study for $S_2$ is finished.

Finally we consider $S_3$. Using again
Lemma~\ref{lem:weakcontraction}, if $x\in V_k$ we have that
$$
\Vert S_3(x) \Vert \leq C \Vert T \Vert_{1,\k-j\eta} \sum_{i\geq 0}
\frac{(u+k)^{\a((j+1)\Ap\kd^{-1} - \B
\ka^{-1})}}{(u+k+i)^{\a(\k-j(\eta -\Ap\kd^{-1})-\B\ka^{-1})}}
\sum_{m=0}^i \frac{1}{(u+k+m)^{1-\a(\eta- \Ap \kd^{-1})}}.
$$
We have different estimates for the sum with respect to $m$ if
either $\Ap\kd^{-1} \leq \eta$ or $\Ap\kd^{-1} > \eta$.
Nevertheless, the sum with respect to $i$ is convergent
provided~$\k$ satisfies the current hypothesis. Performing
straightforward computations, we obtain that
$$
\Vert S_3(x) \Vert \leq C\frac{ \Vert T \Vert_{1,\k-j\eta}
}{(u+k)^{\a(\k-(j+1)\eta-N+1)}} \leq C \Vert T \Vert_{1,\k-j\eta}
\Vert x \Vert^{\k-(j+1)\eta-N+1}
$$
and the lemma is proven.
\end{proof}
The last result of this section is the following.

\begin{proposition}
\label{prop:S0S1higherregularity} Let $\rfm$ be the
differentiability degree of $\Kl{\leq}$ and $R$ assumed in
Theorem~\ref{maintheorem}. Take $\k>N-1+\B \ka^{-1}$ and $\ss$ such
that $0\le \ss \le \rfm$ and
\begin{equation}\label{defrmax}
\k -N+1-\frac{\B}{\ka} - \ss\max \left \{ \eta-
\frac{\Ap}{\kd},0\right \}>0.
\end{equation}
Then,
\[
\S{0}: \X{\ss}{\k} \to  \X{\ss}{\k-N+1} \quad \text{ and } \quad
\S{1}: \X{\ss}{\k-\eta} \to  \X{\ss}{\k-\eta-N+1}
\]
are bounded linear operators.
\end{proposition}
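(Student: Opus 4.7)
The argument is a straightforward induction on the order of differentiability~$\ss$, anchored in the derivative identity from Lemma~\ref{lem:Skonlowregularity}:
$$D(\S{j}(T)) = \S{j+1}(\tilde T), \qquad \tilde T = DT - D(D\FT \circ \Kl{\leq})\,\S{j}(T) + j\,\S{j}(T) \circ R\,(DR)^{j-1}\,D^2 R.$$
The base case $\ss=0$ is the first part of Lemma~\ref{lem:Skonlowregularity} applied with $j=0,1$; its summability condition $\k-N+1-\B/\ka>0$ is implied by the proposition's hypothesis. The case $\ss=1$ is the second part of Lemma~\ref{lem:Skonlowregularity}, whose hypothesis $\k>\lo$ with $\lo=N-1+\B/\ka+\max\{\eta-\Ap/\kd,0\}$ is precisely the proposition's condition specialized to $\ss=1$.

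For the inductive step from $\ss-1$ to $\ss$, suppose $T\in\X{\ss}{\k-j\eta}$ with $j\in\{0,1\}$. The base part of Lemma~\ref{lem:Skonlowregularity} yields $\S{j}(T)\in\X{0}{\k-j\eta-N+1}$, while the derivative identity gives $D(\S{j}(T))=\S{j+1}(\tilde T)$. The content of the step is to show that $\tilde T\in\X{\ss-1}{\k-(j+1)\eta}$, for then the inductive hypothesis applied to $\S{j+1}$ places $\S{j+1}(\tilde T)$ in $\X{\ss-1}{\k-(j+1)\eta-N+1}$, equivalently $\S{j}(T)\in\X{\ss}{\k-j\eta-N+1}$.

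The three pieces of~$\tilde T$ are handled as follows. First, $DT\in\X{\ss-1}{\k-(j+1)\eta}$ directly from the definition of the $\X{\ss}{\k-j\eta}$-norm. Second, since $\FT$ is a polynomial whose nonlinear perturbations start at degree $L$ and $\Kl{\leq}(x)=(x,0)+\OO(\|x\|^2)$, the matrix-valued map $D(D\FT\circ\Kl{\leq})$ lies in $\X{\ss-1}{L-2}$; combining this with $\S{j}(T)\in\X{\ss-1}{\k-j\eta-N+1}$ from the inductive hypothesis via Lemma~\ref{Banachprop}(1) places the second summand in $\X{\ss-1}{L-2+\k-j\eta-N+1}=\X{\ss-1}{\k-(j+1)\eta}$, using $\eta=1+N-L$. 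Third (active only when $j=1$), $D^2R\in\X{\ss-1}{N-2}$ and $\S{1}(T)\circ R\in\X{\ss-1}{\k-\eta-N+1}$ by the composition statement of Lemma~\ref{Banachprop}(2), so the product lies in $\X{\ss-1}{\k-\eta-1}\subset\X{\ss-1}{\k-2\eta}$ since $\eta\ge 1$.

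Quantitatively, the hypothesis $\k-N+1-\B/\ka-\ss\max\{\eta-\Ap/\kd,0\}>0$ is exactly what is required so that, at each level of the induction, the next operator~$\S{j+k}$ encountered still satisfies its own summability condition from Lemma~\ref{lem:Skonlowregularity}; the most restrictive step is at the top of the chain, where the factor $\ss$ multiplies $\eta-\Ap/\kd$, and the $\max$ with $0$ absorbs the regime $\Ap/\kd\ge\eta$ in which iteration produces no additional regularity loss. \emph{The main obstacle} is the careful bookkeeping of the mixed composition--product structure of~$\tilde T$, in particular tracking the term involving $D^2R$ in the regime $N>M$ (where $\eta>1$) and confirming that Lemma~\ref{Banachprop}(2) applies to composition with~$R$ at the required order, so that each summand falls into the correct $\X{\ss-1}{\cdot}$-space with the exact shift by~$\eta$ demanded by the next invocation of Lemma~\ref{lem:Skonlowregularity}.
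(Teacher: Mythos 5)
Your route is in substance the paper's: everything rests on the derivative identity $D(\S{j}(T))=\S{j+1}(\tilde T)$ of Lemma~\ref{lem:Skonlowregularity} together with the order bookkeeping through $\eta$, and your verification that the three summands of $\tilde T$ lie in $\X{\ss-1}{\k-(j+1)\eta}$ is the same computation the paper performs. The problem is the logical shape of your induction. You induct on $\ss$ taking the proposition itself (a statement about $\S{0}$ and $\S{1}$ only) as inductive hypothesis, but with $j$ restricted to $\{0,1\}$ the step does not close: for $j=1$ the identity produces $\S{2}(\tilde T)$ with $\tilde T\in\X{\ss-1}{\k-2\eta}$, and the fact you then invoke, boundedness of $\S{2}:\X{\ss-1}{\k-2\eta}\to\X{\ss-1}{\k-2\eta-N+1}$, is neither in your inductive hypothesis nor in Lemma~\ref{lem:Skonlowregularity}, which only provides the $\X{0}{\cdot}$ and $\X{1}{\cdot}$ mapping properties of $\S{j}$ (albeit for every $j$). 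For $\ss=2$ you can still fall back on the $\X{1}{\cdot}$ part of the lemma with $j=2$, but for $\ss\ge 3$ the $\S{1}$ claim is not reached by your argument, and since your step for $\S{0}$ at level $\ss$ uses $\S{1}$ at level $\ss-1$, the $\S{0}$ claim is in turn left unproven for $\ss\ge 4$. Your closing remark about ``the next operator $\S{j+k}$ encountered'' shows you see that operators of increasing index appear, but that chain must be part of the statement being inducted on, and as written it is not.

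There are two natural repairs. Either strengthen the inductive statement to all operators $\S{j}$ simultaneously, with the loss $\max\{\eta-\Ap/\kd,0\}$ counted once per unit of $j$ as well as per derivative; or, as the paper does, drop the induction on $\ss$ altogether: fix $T\in\X{\ss}{\k}$, define $T^{0}=T$ and $T^{j+1}=DT^{j}-D(D\FT\circ\Kl{\leq})\,\S{j}(T^{j})+j\,\S{j}(T^{j})\circ R\,(DR)^{j-1}D^{2}R$, and check by induction on $j$ only that $T^{j}\in\X{1}{\k-j\eta}$ for $j\le\ss-1$ and $T^{\ss}\in\X{0}{\k-\ss\eta}$. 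Then $D^{j}[\S{0}(T)]=\S{j}(T^{j})$, and Lemma~\ref{lem:Skonlowregularity} applies to every $\S{j}$ in the chain because, for $j\le\ss-1$, $\k-\lo-j(\eta-\Ap\kd^{-1})\ \ge\ \k-N+1-\B\ka^{-1}-(j+1)\max\{\eta-\Ap/\kd,0\}>0$ by~\eqref{defrmax}, while for $j=\ss$ only the $\X{0}{\cdot}$ part of the lemma is needed. This gives $D^{j}[\S{0}(T)]\in\X{0}{\k-j\eta-N+1}$ for all $j\le\ss$, i.e.\ $\S{0}(T)\in\X{\ss}{\k-N+1}$, using only the low-regularity statements already proved; the argument for $\S{1}$ is the analogous chain started at $j=1$. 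Note finally that your sketch addresses membership in the target spaces but not the operator norm bound; boundedness follows from the same estimates with the norms tracked (the paper defers this to Proposition~4.10 of~\cite{BFdLM2007}).
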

\begin{proof}
The proof of this proposition is analogous to the corresponding one
Proposition~4.10 in~\cite{BFdLM2007}. Let $T\in \X{\ss}{\k} \subset
\X{0}{\k}$. The key point of the proof is to deduce that
\begin{equation}
\label{eq:DSjm1Tjm1} D[\S{j-1}(T^{j-1})] = \S{j}(T^j), \qquad 1 \le
j \le \ss,
\end{equation}
being $\{T^j\}_{0\leq j \leq \ss}$ the sequence defined inductively
by
\[
\begin{aligned}
T^0 = & T,\\
T^{j+1} = & DT^{j} - D(D\FT \circ \Kl{\leq}) \S{j}(T^{j}) + j
\S{j}(T^{j}) \circ  R(D R)^{j-1} D^2  R,
\end{aligned}
\]
for $0\le j \le \ss-1$. Indeed, one checks by induction that $T^j$
belongs to $ \X{1}{\k-j\eta}$ if $j\le \ss-1$. For $j=\ss$ we have
that $T^{\ss}\in  \X{0}{\k-\ss\eta}$ and therefore, by
Lemma~\ref{lem:Skonlowregularity}, $\S{j}(T^j)\in
\X{1}{\k-j\eta-N+1}$ and $\S{\ss}(T^{\ss})\in \X{0}{\k-\ss\eta}$.
Note that, if $j\leq \ss-1$ with $\ss$ satisfying \eqref{defrmax},
then
$$
\k -\lo -j \left (\eta -\frac{\Ap}{\kd}\right ) \geq \k-N+1
-\frac{\B}{\ka} -(j+1)\max\left \{ \eta- \frac{\Ap}{\kd},0\right
\}>0.
$$
Then, for $j\leq \ss-1$, the results of
Lemma~\ref{lem:Skonlowregularity} on the operators
$\S{j}:\X{1}{\k-j\eta} \to \X{1}{\k-j\eta -N+1}$ apply.

Applying iteratively~\eqref{eq:DSjm1Tjm1} we have that
$D^j[\S{0}(T)] = S^{j}(T^j) \in  \X{0}{\k-j\eta-N+1}$, for $j\le
\ss$ and, hence, $\S{0}(T) \in  \X{\ss}{\k-N+1}$.

Finally, to prove that the operator $\S{0} :  \X{\ss}{\k}\to
\X{\ss}{\k-N+1}$ is bounded, we refer the reader
to~\cite{BFdLM2007}, Proposition~4.10. The proof that $\S{1}:
\X{\ss}{\k-\eta } \to  \X{\ss}{\k-\eta-N+1}$ is also bounded is very
similar to the one for $\S{0}$.
\end{proof}
\subsection{End of the proof of Proposition~\ref{prop:fixedpoint}. Fixed point equation}
Using Proposition~\ref{prop:S0S1higherregularity} we are able to
prove that the fixed point equation~\eqref{fpeqfirst},
\begin{equation}
\label{def:fpe} \Kl{>} = \S{0} \circ \FF(\Kl{>}),
\end{equation}
is well defined in the appropriate Banach spaces and it is a
contraction. Concretely, we prove Proposition~\ref{prop:fixedpoint}.
That is, that there exist a unique solution $\Kl{>}$ of
equation~\eqref{def:fpe} belonging to $\X{\rvl}{\k-N+1}$ for any
$\ri<\k\leq r$. To do so, we follow the same steps as the ones in
Section~4.10 of~\cite{BFdLM2007}, . We sketch them without proofs,
only given the essential information. The main tool is
Lemma~\ref{Banachprop}.

Let $\r>0$ be such that all the results in
Sections~\ref{subsecoperatorsS} and~\ref{subsecweakcontraction} are
valid. Recall that we fix this quantity at the end of
Section~\ref{sec:scaling} satisfying the results in
Section~\ref{decomposition} and~\eqref{KAKBB} for $\ka,\kb$ and
$\B$.

Since $\FF(0)=-\TT{\k}-\FR{\k} \circ \Kl{\leq}$, using that
$\TT{\k}$ and $\FR{\k}$ are $\CC^r$ functions, that $\Kl{\leq}$ is a
$\CC^{\rfm}$ function and the definition of $\rvl$, we have that
$\FF(0)\in \CC^{\min\{r,\rfm\}}\subset \CC^{\rvl}$. Then $\FF(0)\in
\X{\rvl}{\k}$ and, since $\nu=\rvl$ satisfies the condition stated
in Proposition~\ref{prop:S0S1higherregularity}, see~\eqref{KAKBB},
$\S{0}\circ \FF(0)\in \X{\rvl}{\k-N+1}$. We also have that
$$
\|\S{0} \circ \FF(0)\|_{\rvl,\k-N+1}\| \le
\|\S{0}\|\big(\|\TT\k\|_{\rvl,\k}+\|\FR{\k} \circ
\Kl{\leq}\|_{\rvl,\k}\big ) =:\frac{\varsigma_*}{2}.
$$
Since the domain of $\Kl{\leq}$ is $\Vr\subset \Vro$ we will work
with this domain in the spaces  $\X{\nu}{k}$.

We will find the solution $K^{>}$ of equation~\eqref{def:fpe} in
$\BB_{\k-N+1}^{\rvl-1,\varsigma} \subset
 \X{\rvl-1}{\k-N+1}$, the ball of radius $\varsigma\geq \varsigma_*$. First we note that for any $\varsigma\geq \varsigma_*$
there exists $\r'$ small enough such that if $\Kl{>}\in
\BB_{\k-N+1}^{\rvl-1,\varsigma}$ and $x\in V_{\r'}$, then
$(\Kl{\leq}+\Kl{>})(x) \in U$, the domain of $F$. Indeed, we deduce
this property because $U$ is an open set, $\text{dist}(V_{\r'},
\partial U)>0$ and $\Vert(\Kl{\leq}+\Kl{>})(x)-x \Vert \leq C
(\r')^2$ with $C>0$ a constant. Note that $\r'$ depends on $\varsigma$,
$\r$ and $\Kl{\leq}$.

As usual in the differentiable case, we first prove the existence of
a solution belonging to $\CC^{\rvl-1}$ defined on $V_{\r'}$. To do
so, it only remains to check that the operator $\FF:
\BB_{\k-N+1}^{\rvl-1,\varsigma} \to \X{\rvl-1}{\k-N+1}$ is a contraction.
The proof of this result follows from the analogous result
 in \cite{BFdLM2007} and in fact we obtain the same bound
for the Lipschitz constant
$$
\text{lip}(\FF) \leq C (\r')^{\k- 2N-L},
$$
with $C$ independent of $\r'$, but depending on $\varsigma$ and $\r$.

As a consequence, equation~\eqref{def:fpe} has a solution $\Kl{>} :
V_{\r'}\to \RR^{n+m}$. Applying the linear operator $\L{0}$ we
obtain that equation~\eqref{eq:Kmes} has a unique differentiable
solution $\Kl{>}$. This implies that $K=\Kl{\leq}+\Kl{>}$ and $ R$
are $\CC^{\rvl-1}$ solutions of the invariance equation
\eqref{eqFTKfull}.

Following the same arguments as the ones given in \cite{BFdLM2007}
we deduce that, if $r=\infty$ the parametrization
$K=\Kl{\leq}+\Kl{>}$ is also a $\CC^{\infty}$ as well as $ R$ is.
Moreover, the arguments to prove the sharp regularity can be also
applied in this new context. Hence we obtain $\CC^{\rvl}$
parametrizations.

Until now the function $K=\Kl{\leq}+\Kl{>}$ is defined on $V_{\r'}$
with $\r'\leq \r$. However, since $\r$ is small enough to assure
that $R$ satisfies the conclusions of Lemma~\ref{Rlemma}, we can use
the invariance equation to extend the domain of $K$ to $\Vr$ as we
did in the proof of Corollary~\ref{cor:fixedpoint}. Indeed, let
$k\in \NN$ be such that $R^k(\Vr\backslash V_{\r'})\subset V_{\r'}$.
Then $K= F^{-k} \circ K \circ R^k$ extends $K$ to $\Vr$.

\begin{remark}\label{dependencerho}
We have proven that the domain $\Vr$ of $K$ and $R$ depends on $\k$,
$\Kl{\leq}$ and on the constants $\ka,\kb,\B$ as well as
$\ap,\bp,\Ap, \Bp$.
\end{remark}

\section{Dependence on parameters}
In this section we prove Theorem~\ref{maintheoremparam} about the
dependence of the invariant manifold on parameters. Along this
section we will assume all the conditions stated in this theorem. We
will proceed in a similar way as in the proof of
Theorem~\ref{maintheorem}.
\subsection{Preliminary facts. Consequences of the previous results}
As a consequence of Lemma~\ref{lemmaconstantslambda}, if Hypothesis
H${\lambda}$ holds true, then H1, H2 and H3 are satisfied for any
$\lambda \in \Lambda$. Then, using Proposition~\ref{prop:fixedpoint}
with $\k=r$, we have that for any $\lambda\in \Lambda$, there exists
$\r_{\lambda}$ such that the invariance equation
\begin{equation*}
F(\Kl{\leq}(x,\lambda)+ \Kl{>}(x,\lambda),\lambda) -
\Kl{\leq}(R(x,\lambda),\lambda)+ \Kl{>}(R(x,\lambda),\lambda)=0
\end{equation*}
has a solution $\Kl{>}(\cdot, \lambda)\in \X{\rvl}{\k-N+1}$ defined
on $V_{\r_{\lambda}}$. However we emphasize that
\begin{itemize}
\item the degree of differentiability $\rvl$ does not depend on
$\lambda$ and
\item $\r_{\lambda}$ can be taken independent on $\lambda$ provided the constants $\ap, \bp, \Ap, \Bp,\Bq$ are
independent on the parameter (see Remark~\ref{dependencerho}). Then
$\Kl{>}$ is defined over $\Vr \times \Lambda$.
\end{itemize}
In addition, we already know that for any $\lambda$, $\Kl{>}(\cdot,
\lambda)$ is the unique solution belonging to $\X{\rvl}{\k-N+1}$ of
the fixed point equation~\eqref{def:fpe}:
\begin{equation}\label{def:fpe:lambda}
\Kl{>} = \S{0} \circ \FF(\Kl{>})
\end{equation}
being~$\S{0}$ and~$\FF$ defined by~\eqref{defS0param}
and~\eqref{defFcalparam}, respectively.

It is important to remark that all the functions
involved, $\FT, \TT{\k},\FR{\k}, \Kl{\leq},  K, R, $ and $T$, depend
on both, $x,\lambda$, but, abusing notation, we only indicate the
composition with respect to the $x$ variable. For instance $R^2$
means $R(R(x,\lambda),\lambda)$ and $\FR{\k} \circ \big (\Kl{\leq} + K\big)$ means $\FR{\k}
(\Kl{\leq}(x,\lambda) + K (x,\lambda),\lambda)$.

We restate Theorem~\ref{maintheoremparam} in a functional setting
using the space $\CC^{\Sigma_{\sigma,\nu}}$ introduced
in~\eqref{defCDS}. We also introduce the Banach space
\begin{equation*}
\XD{\sigma}{\nu}{k} = \big \{ f: \mathcal{U} \times \Lambda\to
\RR^{l} : f\in \CC^{\Sigma_{\sigma,\nu-\sigma}} \; \max_{i,j\in
\Sigma_{\sigma,\nu-\sigma}} \sup_{ (x,\lambda) \in \mathcal{U}
\times \Lambda} \frac{\Vert \Dl^i \Dx^j f(x,\lambda) \Vert }{\Vert
x\Vert^{k+i-(i+j)\eta}} <\infty\}
\end{equation*}
for $\nu\geq \sigma$, endowed with the norm
\begin{equation*}
\Vert f \Vert_{\nu,k}^{\sigma} = \max_{i,j\in
\Sigma_{\sigma,\nu-\sigma}} \sup_{(x,\lambda)\in \mathcal{U}\times
\Lambda} \frac{\Vert \Dl^i \Dx^j f(x,\lambda) \Vert }{\Vert
x\Vert^{k+i-(i+j)\eta}}.
\end{equation*}
Note that $\XD{\sigma}{\nu+\sigma}{k} \subset
\CC^{\Sigma_{\sigma,\nu}}$. The differentiability conclusions of
Theorem~\ref{maintheoremparam} are a direct consequence of the
following proposition.
\begin{proposition}\label{prop:theoremparam}
Assume all the conditions in Theorem~\ref{maintheoremparam}. Let $\k
\in \NN$ be such that $\max\{\ri,\ril\}<\k\leq \rx$ with $\ri$ and
$\ril$ defined in~\eqref{defri} and~\eqref{defril} respectively.
Then the solution $\Kl{>}:\Vr \times \Lambda \to \RR^{n+m}$ of the
fixed point equation~\eqref{def:fpe:lambda} belongs to
$\XD{s^>_{\k}}{\nu_{\k}^>}{\k-N+1}$ with $\nu_{\k}^> = \rvl +
s^>_{\k}$, $\rvl\leq \min\{\rx,\rfm\}$, $s^>_{\k}\leq \min\{\rl,
\rl^{\leq}\}$ and
$$
\k-N+1 -\frac{\Bp}{\ap} -(\nu_{\k}^>-i)\max \left \{\eta -
\frac{\Ap}{\ddp},0\right \}  - i (\eta-1)>0,\qquad 0\leq i \leq
s^>_{\k}.
$$
\end{proposition}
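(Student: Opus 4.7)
The plan is to repeat the proof of Proposition~\ref{prop:fixedpoint} in the parametric scale $\XD{s}{\nu}{k}$. By Lemma~\ref{lemmaconstantslambda} and Hypotheses H$\lambda$, HP, every constant and radius appearing in Sections~\ref{preliminaryfacts}--\ref{sec:theinvariantmanifold} is uniform in $\lambda\in\Lambda$, so Proposition~\ref{prop:fixedpoint} applied pointwise in $\lambda$ already produces $\r>0$ (independent of $\lambda$) together with a unique $\Kl{>}(\cdot,\lambda)\in\X{\rvl}{\k-N+1}$ solving the fixed-point equation~\eqref{def:fpe:lambda}. What remains is joint $(x,\lambda)$-regularity: control of the mixed derivatives $\Dl^i\Dx^j\Kl{>}$ for every $(i,j)\in\Sigma_{s^>_\k,\rvl}$.

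To this end I would set up the parametric analogues of the operators $\L{j}$ and $\S{j}$ of Section~\ref{subsecoperatorsS}. Formally differentiating $\L{0}(S)=T$ in $(\lambda,x)$, the mixed derivative $H=\Dl^i\Dx^j S$ satisfies an inhomogeneous equation
\[
(D\FT\circ\Kl{\leq})\,H - H\circ R\,(DR)^j = \widetilde T_{i,j},
\]
where $\widetilde T_{i,j}$ depends polynomially on the lower-order mixed derivatives of $S$ and on those of $\FT\circ\Kl{\leq}$ and $R$. Its formal inverse is still the series $\S{j}$ of~\eqref{def:Sj}, and the main task is to prove that it defines a bounded operator $\S{0}\colon \XD{s^>_\k}{\nu_\k^>}{\k}\to\XD{s^>_\k}{\nu_\k^>}{\k-N+1}$, i.e.\ the parametric replacement of Proposition~\ref{prop:S0S1higherregularity}.

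The crucial new estimate is the behaviour in $m$ of the $\lambda$-derivatives of $R^m$. Hypothesis HP combined with $q(x,0,\lambda)\equiv 0$ gives $\Dl R(x,\lambda)=\OO(\Vert x\Vert^N)$, and differentiating $R^{m+1}=R\circ R^m$ in $\lambda$ one shows inductively, using Lemma~\ref{Rlemma}, that for $x\in V_k$,
\[
\Vert\Dl R^m(x,\lambda)\Vert \le C\,\Vert R^m(x,\lambda)\Vert^{\eta},
\]
with analogous bounds for higher $\lambda$-derivatives. Consequently, each $\Dl$ that falls on a summand of $\S{j}(T)$---either on $(D\FT)^{-1}\circ\Kl{\leq}\circ R^l$, or on $T\circ R^m$, or on $(DR^m)^j$---produces an extra factor $\Vert R^m(x)\Vert^{\eta-1}\sim (u+k+m)^{-\a(\eta-1)}$ in that summand, whereas each $\Dx$ still costs $\max\{\eta-\Ap/\ddp,0\}$ exactly as in Proposition~\ref{prop:S0S1higherregularity}. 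Adding these losses over all $(i,j)\in\Sigma_{s^>_\k,\rvl}$ reproduces precisely the convergence condition stated in the proposition.

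The main obstacle will be combinatorial rather than analytic: one has to write a Fa\`a di Bruno-type expansion for $\Dl^i\Dx^j\S{0}(T)$, distributing the derivatives among the factors $(D\FT)^{-1}\circ\Kl{\leq}\circ R^l$, $T\circ R^m$ and $(DR^m)^j$ in the series~\eqref{def:Sj}, and verifying termwise that each resulting series converges absolutely in the weighted norm of $\XD{s^>_\k}{\nu_\k^>}{\k-N+1}$. Once $\S{0}$ is known to be bounded, a direct computation based on Lemma~\ref{Banachprop} applied componentwise in $(x,\lambda)$ shows that the nonlinear operator $\FF$ in~\eqref{defFcalparam} maps a ball of $\XD{s^>_\k}{\nu_\k^>-1}{\k-N+1}$ into itself and is a contraction with Lipschitz constant $\OO(\r^{\k-2N-L})$. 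The Banach fixed-point theorem then produces $\Kl{>}\in\XD{s^>_\k}{\nu_\k^>-1}{\k-N+1}$, which by the $\lambda$-wise uniqueness already available coincides with the solution supplied by Proposition~\ref{prop:fixedpoint}; the extra derivative up to order $\nu_\k^>$ is recovered by the same regularity-upgrading argument used at the end of Section~\ref{sec:theinvariantmanifold}.
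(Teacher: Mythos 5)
Your overall architecture is essentially the paper's: pointwise-in-$\lambda$ existence with a uniform radius, then boundedness of $\S{0}$ on the parametric spaces $\XD{\sigma}{\nu}{k}$ obtained by formally differentiating the series, identifying the formal derivative with $\S{0}$ applied to a modified right-hand side (the paper does this via $\Dl\S{0}(T)=\S{0}(\tilde T)$ and injectivity of $\L{0}$, then induction on the number of $\lambda$-derivatives), and finally the contraction argument with the last derivative recovered afterwards. The genuine gap is your ``crucial new estimate'' $\Vert \Dl R^m(x,\lambda)\Vert\le C\Vert R^m(x,\lambda)\Vert^{\eta}$, which is false in general. Writing $\Dl R^i=\Dl R\circ R^{i-1}+\sum_{j=1}^{i-1}(\Dx R^j\circ R^{i-j})\,\Dl R\circ R^{i-j-1}$ and using Lemma~\ref{lem:weakcontraction} (with constants uniform in $\lambda$), what one actually obtains for $x\in V_k$ is
\[
\Vert \Dl R^i(x,\lambda)\Vert\le \frac{C}{(u+k+i)^{\a \Ap\kd^{-1}}\,(u+k)^{\a(1-\Ap\kd^{-1})}}+\frac{C}{(u+k+i)^{\a}} .
\]
Since $\Vert R^i(x)\Vert\sim (u+k+i)^{-\a}$, the best available statement when $\Ap\ge\kd$ is $\Vert\Dl R^i\Vert\le C\Vert R^i\Vert$, never $\Vert R^i\Vert^{\eta}$ when $\eta>1$ (the map $R$ only involves the degree $N$, not $M$, so no extra decay of order $\eta$ can appear); and when $\Ap<\kd$ --- precisely case (3) of Theorem~\ref{maintheoremparam} --- even $\Vert\Dl R^i\Vert\le C\Vert R^i\Vert$ fails, because the first term decays only like $(u+k+i)^{-\a\Ap\kd^{-1}}$, with a $(u+k)$-dependent prefactor. (Note also that, as written, an ``extra factor $\Vert R^m\Vert^{\eta-1}$'' in a summand would be a gain, not a loss; what is really needed is that each $\Dl$ lowers the admissible weight $k-j\eta-i(\eta-1)$ by $\eta-1$.)

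Because the true bound is weaker and orbit-dependent, the termwise bookkeeping you propose does not close by itself: one must carry the two-term estimate through the series, split $\Dl\S{0}(T)$ into the part where the derivative falls on $T\circ R^i$ (producing $(\Dx T\circ R^i)\Dl R^i$) and the part where it falls on the product $\prod (D\FT)^{-1}\circ \Kl{\leq}\circ R^m$ (producing a double sum controlled by $\Vert\Dl((D\FT)^{-1}\circ\Kl{\leq}\circ R^m)\Vert\le C(u+k+m)^{-\a(L-1)}$), and the borderline situation $\a(L-1)=1$, i.e.\ $\eta=1$, requires a separate $\varepsilon$-argument. It is exactly this finer analysis, together with an induction in the number of $\lambda$-derivatives, that yields the mixed condition $\k-N+1-\Bp/\ap-(\nu_{\k}^>-i)\max\{\eta-\Ap/\ddp,0\}-i(\eta-1)>0$. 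So your final convergence condition is the right target, but the estimate you invoke to reach it is incorrect, and without replacing it by the correct treatment of $\Dl R^m$ and of the derivative of the transfer-matrix product the argument does not go through.
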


The remaining part of this section is devoted to prove this result.
The procedure is similar to the one we have followed in
Section~\ref{sec:theinvariantmanifold}. First we study the product
and composition of functions belonging to the functional spaces
$\XD{\sigma}{\nu}{k}$. Then, we study the linear operator $\S{0}$ defined on
$\XD{\ll}{\ss}{\k}$ and, finally, we apply the fixed point theorem
to obtain a solution $\Kl{>}$ of the fixed point
equation~\eqref{def:fpe:lambda} belonging to $\XD{\ll}{\ss}{\k-N+1}$
with appropriate values of $\ll$ and $\ss$. With standard arguments we check
the sharp regularity of the solutions.
\subsection{Technical lemmas}
Next lemma, whose proof we skip, is analogous to
Lemma~\ref{Banachprop} for~$\XD{\sigma}{\nu}{k}$.
\begin{lemma}
The Banach spaces $\XD{\sigma}{\nu}{k}$
satisfy:
\begin{enumerate}
\item[(1)] Let $f( x,\lambda) \in L(X_1,X_2)$ with $f\in \XD{\sigma}{\nu}{k}$ and $g( x,\lambda) \in X_1$ with
$g\in  \XD{\sigma}{\nu}{l}$, then $f \cdot  g \in
\XD{\sigma}{\nu}{k+l}$ and $\Vert f \cdot g\Vert_{\nu,k+l}^{\sigma}
\leq 2^{\nu}\Vert f\Vert_{\nu,k}^{\sigma} \Vert g
\Vert_{\nu,l}^{\sigma}$.
\item[(2)] Let $f:U\times \Lambda \subset \RR^{n+m+n'} \to E$ be a $\CC^{\Sigma_{\sigma,\nu-\sigma}}$ map
and~$E$ a Banach space such that $\Vert \Dl^{l'}\Dx^l f(x,\lambda)
\Vert =\OO(\Vert x \Vert^{j-l})$ for all $(l',l)\in
\Sigma_{\sigma,\nu-\sigma}$. Then, for any map $g :\Vr\times \Lambda
\to U$ such that $g\in  \XD{i'}{i}{1}$ for some $(i',i)\in
\Sigma_{\sigma,\nu}$ we have that $f\circ (g,\Id) \in
\XD{i'}{i}{j}$.
\end{enumerate}
\end{lemma}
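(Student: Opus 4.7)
My plan is to verify (1) and (2) by the generalized Leibniz rule and by the Fa\`a di Bruno formula respectively, tracking how the weighted exponents combine under the $\eta$-weights that define $\XD{\sigma}{\nu}{k}$.

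For (1), for each index $(a,b) \in \Sigma_{\sigma,\nu-\sigma}$ I would write the Leibniz expansion
\begin{equation*}
D_\lambda^a D_x^b (f \cdot g) = \sum_{\alpha \le a} \sum_{\beta \le b} \binom{a}{\alpha}\binom{b}{\beta}\, (D_\lambda^\alpha D_x^\beta f)\, (D_\lambda^{a-\alpha} D_x^{b-\beta} g)
\end{equation*}
and insert the defining bounds $\|D_\lambda^\alpha D_x^\beta f(x,\lambda)\| \le \|f\|_{\nu,k}^\sigma \,\|x\|^{k+\alpha-(\alpha+\beta)\eta}$ together with the analogue for $g$. The exponents telescope to $(k+l) + a - (a+b)\eta$, which is precisely the weight required by $\XD{\sigma}{\nu}{k+l}$. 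The binomial coefficients sum to $2^{a+b} \le 2^\nu$, accounting for the constant in the stated norm inequality.

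For (2), I would apply Fa\`a di Bruno to $h(x,\lambda) := f(g(x,\lambda),\lambda)$. A generic mixed derivative $D_\lambda^\alpha D_x^\beta h$ expands as a finite linear combination of monomials of the form
\begin{equation*}
(D_z^a D_\mu^b f)(g(x,\lambda),\lambda) \prod_{k=1}^a D_\lambda^{\alpha_k} D_x^{\beta_k} g(x,\lambda),
\end{equation*}
where $z,\mu$ denote the first and second arguments of $f$, and the indices satisfy $b + \sum_k \alpha_k = \alpha$, $\sum_k \beta_k = \beta$ and $(\alpha_k,\beta_k) \ne (0,0)$. The hypothesis on $f$ yields $\|(D_z^a D_\mu^b f)(g,\lambda)\| = \OO(\|g\|^{j-a}) = \OO(\|x\|^{j-a})$, where the last step uses $g(x,\lambda) = \OO(\|x\|)$, which follows from $g \in \XD{i'}{i}{1}$ evaluated at $(\alpha,\beta) = (0,0)$. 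The remaining factors contribute $\prod_k \|x\|^{1+\alpha_k - (\alpha_k+\beta_k)\eta}$; using $\sum_k \alpha_k = \alpha - b$ and $\sum_k \beta_k = \beta$ the total exponent simplifies to $j + \alpha - (\alpha+\beta)\eta + b(\eta-1)$. Since $\eta \ge 1$ and $\|x\| \le \ro$, the extra factor $\|x\|^{b(\eta-1)}$ is bounded, so $f \circ (g,\Id)$ satisfies the weight estimate required by $\XD{i'}{i}{j}$.

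The main obstacle is the exponent bookkeeping in (2): checking that for every term of the Fa\`a di Bruno sum the residual $b(\eta-1)$ — coming from those $\lambda$-derivatives that land on $f$ rather than on $g$ — carries the correct sign. The positivity of this residual is precisely the reason the spaces $\XD{\sigma}{\nu}{k}$ were defined with the weight $k + i - (i+j)\eta$, incorporating $\eta = 1 + N - L \ge 1$ which reflects the potential asymmetry between the degrees $N$ and $M$ of the contracting and expanding parts.
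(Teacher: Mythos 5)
The paper does not actually prove this lemma: it is stated with ``whose proof we skip'' as the parametric analogue of Lemma~\ref{Banachprop}, which is itself left unproved. So there is no argument in the paper to compare with; your Leibniz/Fa\`a di Bruno proof is the standard one that is being implicitly invoked, and your exponent bookkeeping is right. In (1) the telescoping to $(k+l)+a-(a+b)\eta$ and the constant $\sum_{\alpha\le a,\beta\le b}\binom{a}{\alpha}\binom{b}{\beta}=2^{a+b}\le 2^{\nu}$ (using $a+b\le\nu$ on $\Sigma_{\sigma,\nu-\sigma}$) give exactly the stated norm inequality. In (2) the identification of the residual exponent $b(\eta-1)\ge 0$, coming from the $\lambda$-derivatives that fall on the second slot of $f$, is precisely the point, and it is absorbed because $\eta\ge 1$ and $\Vert x\Vert\le\r$.

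One small step deserves a patch: the assertion $\OO(\Vert g\Vert^{j-a})=\OO(\Vert x\Vert^{j-a})$ from $g=\OO(\Vert x\Vert)$ is only valid when $j-a\ge 0$; for $j-a<0$ an upper bound on $\Vert g\Vert$ says nothing about the negative power. In that case you should instead use that $\Dl^{b}\Dx^{a}f$ is \emph{bounded}, which is part of the paper's definition~\eqref{defCDS} of $\CC^{\Sigma_{\sigma,\nu-\sigma}}$, together with $\Vert x\Vert^{j-a}\ge \r^{j-a}$ on $\Vr$, so the required bound still holds. (Alternatively, in the paper's applications $g=\Kl{\leq}+K$ also satisfies a lower bound $\Vert g(x,\lambda)\Vert\ge c\Vert x\Vert$, but the lemma as stated does not assume this, so the boundedness route is the clean fix.) With that adjustment, and the routine remark that each Fa\`a di Bruno term involves indices $(b,a)$ with $b\le\alpha\le i'\le\sigma$ and $a+b\le\alpha+\beta$ so that the hypotheses on $f$ and the $\CC^{\Sigma}$ regularity apply, your argument is complete.
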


We need to stablish the dependence on $\lambda$ of $\S{0}(K)$.

\subsubsection{Differentiability with respect to $\lambda$ of the linear operator $\S{0}$}
We first note that all the results stated in the previous sections
are valid uniformly in $\lambda\in \Lambda$ for functions belonging
to $\XD{0}{\ss}{\k}$. This is due to Hypothesis HP and to the fact
that the constants $\ap, \bp,$ etcetera, defined
in~\eqref{defconstantslambda} are independents of $\lambda\in
\Lambda$ and therefore, all the bounds in the previous sections are
uniform with respect to $\lambda \in \Lambda$. The uniformity  with respect to $\lambda\in \Lambda$ of
Lemmas~\ref{lem:weakcontraction} and~\ref{lem:Ljonetoone} and
Proposition~\ref{prop:S0S1higherregularity} is summarized in the following lemma.
\begin{lemma}\label{uniformlambda}
We have that:
\begin{itemize}
\item [(i)]
All the bounds in~Lemma~\ref{lem:weakcontraction} hold true with
constants $C$ independent of $\lambda\in \Lambda$.
\item [(ii)]
Under the hypotheses of Lemma~\ref{lem:Ljonetoone}, the formula
$$
\L{0}(S) = \big (D \FT \circ \Kl{\leq} \big )  S -  S \circ  R
$$
defines an operator $\L{0} : \XD{0}{0}{\k-N+1} \to \CC^0$,
continuous and one to one.
\item[(iii)]
If the conditions for $\ss, \k$ of
Proposition~\ref{prop:S0S1higherregularity} are satisfied, then
$$
\S{0}: \XD{0}{\ss}{\k} \to  \XD{0}{\ss}{\k-N+1} \quad \text{ and }
\quad \S{1}: \XD{0}{\ss}{\k-\eta} \to  \XD{0}{\ss}{\k-\eta-N+1}
$$
are bounded linear operators.
\end{itemize}
\end{lemma}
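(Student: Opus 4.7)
The plan is to observe that each of the three conclusions is essentially a ``bookkeeping'' restatement of the corresponding unparametric result, so the task is to verify that every quantitative estimate in Sections~\ref{decomposition}--\ref{sec:theinvariantmanifold} has been stated, or can be restated, with constants that do not depend on $\lambda$. The key structural fact to invoke throughout is the combination of Hypothesis HP together with Lemma~\ref{lemmaconstantslambda}: the constants $\ap,\bp,\Ap,\Bp,\Bq,\CIn$ defined in~\eqref{defconstantslambda} are, by construction, infima/suprema over $\Lambda$, so choosing $\ka<\ap$, $\kb>\bp$ and $\B>\Bp$ as in~\eqref{KAKBB} fixes a single $\rho>0$ for which the hypotheses of Lemma~\ref{Rlemma} are satisfied by $R(\cdot,\lambda)$ \emph{uniformly} in $\lambda$.

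For item~(i), I would re-run the proof of Lemma~\ref{lem:weakcontraction} verbatim, noting at each step that the bounds $\|R^j(x,\lambda)\|^{N-1}\leq \frac{\alpha}{\ka(u+k+j)}(1+\OO(k^{-\beta}))$ from Lemma~\ref{Rlemma}, the bound in Lemma~\ref{lem:weakexpansion} on $(D\tilde\FT)^{-1}$, and the estimates on $DR$ and $D^2R$ coming from Hypothesis HP (which gives $D_xf,D_xg$ of the required orders uniformly in $\lambda$) are all uniform over $\Lambda$. Consequently the constants~$C$ appearing in~\eqref{bound:weakcontraction1}--\eqref{bound:weakcontraction4Ap>bp} depend only on $\delta,\rho,\ell$ and the uniform constants~\eqref{defconstantslambda}.

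For item~(ii), $\L{0}$ is well defined and continuous as a map $\XD{0}{0}{\k-N+1}\to \CC^0$ because $R(\cdot,\lambda)$ maps $\Vr$ into itself (Lemma~\ref{Rlemma}) and $D\FT\circ\Kl{\leq}$ is continuous on $\Vr\times\Lambda$ by assumption (a). Injectivity is obtained by applying the proof of Lemma~\ref{lem:Ljonetoone} at each fixed $\lambda$: if $\L{0}(S)=0$ then the iterated identity $S=(\prod(D\FT)^{-1}\circ\Kl{\leq}\circ R^m)S\circ R^{i+1}$ combined with the pointwise version of~(i) forces $S(x,\lambda)=0$ for every $\lambda$. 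The only subtlety is that one must use the bound $\|S(\cdot,\lambda)\|_{0,\k-N+1}\leq \|S\|_{0,\k-N+1}^{0}$ in order to transfer the uniform norm on $\XD{0}{0}{\k-N+1}$ through the iteration, which is automatic from the definition of the parametric norm.

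For item~(iii), I would proceed identically: the series defining $\S{0}(T)$ and $\S{1}(T)$ in~\eqref{def:Sj} converge absolutely for each fixed $\lambda$ by Proposition~\ref{prop:S0S1higherregularity}, and the $x$-derivative bounds obtained there are controlled by~(i), hence uniform in $\lambda$. Taking the supremum of those bounds over $\lambda\in\Lambda$ yields the required inequality in the $\XD{0}{\ss}{\cdot}$-norm, so $\S{0}$ and $\S{1}$ are bounded between the stated spaces. The main (and only) potentially delicate point is to check that the termwise construction does produce a function that is jointly continuous in $(x,\lambda)$, which follows from uniform convergence of the series (again guaranteed by the uniform bounds of~(i)) combined with the continuity of each summand. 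There is no genuine obstacle here: the entire content of the lemma is that the choices made in~\eqref{defconstantslambda} have already absorbed the $\lambda$-dependence into constants of the problem.
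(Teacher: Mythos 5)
Your proposal is correct and follows essentially the same route as the paper, which justifies this lemma only by the remark that Hypothesis HP together with the $\lambda$-independent constants of~\eqref{defconstantslambda} (infima/suprema over $\Lambda$, cf.\ Lemma~\ref{lemmaconstantslambda}) make every bound of Lemmas~\ref{Rlemma}, \ref{lem:weakexpansion}, \ref{lem:weakcontraction}, \ref{lem:Ljonetoone} and Proposition~\ref{prop:S0S1higherregularity} uniform in $\lambda$, so the unparametric proofs carry over verbatim. Your more explicit verification (uniform choice of $\ka,\kb,\B,\r$, fixed-$\lambda$ injectivity argument, sup over $\lambda$ in the parametric norms, joint continuity via uniform convergence) simply spells out what the paper leaves implicit.
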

Now we state and prove the  differentiability results with respect
to the parameter $\lambda$.
\begin{lemma} Let $\k, \ss,\ll$ be
such that $\ri<\k\leq r$, $\ll \leq \rl^{\leq}$, $1\leq \ll\leq \ss
\leq \rfm + \rl^{\leq}$ and
\begin{equation}\label{condlparam}
\k -N+1 -\frac{\B}{\ka} - (\ss-i)\max \left \{\eta-
\frac{\Ap}{\kd},0\right \}-i(\eta-1)>0,\qquad 0\leq i\leq \sigma.
\end{equation}
We have that:
\begin{enumerate}
\item[(1)] (Low order regularity) The linear operator $\S{0}: \XD{1}{\ss}{\k} \to \XD{1}{\ss}{\k-N+1}$
is bounded if $\k,\ss$ satisfy condition~\eqref{condlparam} with
$\sigma=1$. In addition,
\begin{equation}
\label{derlambdaS0} \Dl \S{0}(T) = \S{0}(\tilde{T})
\end{equation}
with
\begin{equation*}
\tilde{T} = -\Dl (DP \circ \Kl{\leq}) \S{0}(T) + \Dx [\S{0}(T)]
\circ R \cdot \Dl R + \Dl T.
\end{equation*}
\item[(2)] (Higher order regularity) The linear operator $\S{0}: \XD{\ll}{\ss}{\k} \to \XD{\ll}{\ss}{\k-N+1}$ is bounded.
\end{enumerate}
\end{lemma}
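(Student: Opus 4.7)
The plan is to mimic the proof of Lemma~\ref{lem:Skonlowregularity} and Proposition~\ref{prop:S0S1higherregularity}, but now taking derivatives with respect to~$\lambda$ instead of (or in addition to) $x$. The main tool will be the uniqueness statement in Lemma~\ref{uniformlambda}(ii), which allows us to identify a formal $\lambda$-derivative of $\S{0}(T)$ with a new element of the form $\S{0}(\tilde{T})$ once both are shown to solve the same equation $\L{0}(\cdot) = \tilde T$.

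For item~(1), assume $T\in \XD{1}{\ss}{\k}$. First I would verify that the function $\tilde T$ defined in~\eqref{derlambdaS0} actually lies in $\XD{0}{\ss-1}{\k+1-\eta}$, using the weight estimates coming from hypotheses~HP and from item~(b) of Theorem~\ref{maintheoremparam}: namely $\Dl(D\FT\circ \Kl{\leq}) = \OO(\Vert x\Vert^{L-1})$, $\Dl R = \OO(\Vert x\Vert^{N+1})$, combined with $\S{0}(T)\in \XD{0}{\ss}{\k-N+1}$ and $\Dx\S{0}(T)\in \XD{0}{\ss-1}{\k-\eta-N+1}$ (the latter via $\S{1}$ as in Proposition~\ref{prop:S0S1higherregularity}, whose uniform-in-$\lambda$ version is given by Lemma~\ref{uniformlambda}(iii)). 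With this in hand, hypothesis~\eqref{condlparam} with $\sigma=1$ is precisely what is needed for $\S{0}(\tilde T)$ to be a well-defined element of $\XD{0}{\ss-1}{\k-N+2-\eta}$. Finally, I would identify $\Dl\S{0}(T)$ with $\S{0}(\tilde T)$ by observing that both satisfy the linear equation $\L{0}(H) = \tilde T$ obtained by formally differentiating $\L{0}(\S{0}(T)) = T$ with respect to $\lambda$, and then applying the uniqueness of Lemma~\ref{uniformlambda}(ii). To rigorously justify that $\Dl \S{0}(T)$ exists and equals this formal solution, one would differentiate the series~\eqref{defS0param} term by term and show absolute convergence of the resulting series in the required norm, exactly as in Proposition~\ref{prop:S0S1higherregularity} but using the estimates from Lemma~\ref{lem:weakcontraction} (which are uniform in $\lambda$ by Lemma~\ref{uniformlambda}(i)).

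For item~(2), I would argue by induction on $\ll$. The base case $\ll=1$ is item~(1). For the inductive step, assume $\Dl^{\ll-1}\S{0}(T) = \S{0}(\tilde{T}_{\ll-1})$ for some $\tilde T_{\ll-1}$ in an appropriate space $\XD{0}{\ss-\ll+1}{\k+\ll-1-(\ll-1)\eta}$. Applying item~(1) to $\tilde T_{\ll-1}$ gives $\Dl \S{0}(\tilde T_{\ll-1}) = \S{0}(\tilde T_{\ll})$ where $\tilde T_{\ll}$ is built from $\tilde T_{\ll-1}$, $\S{0}(\tilde T_{\ll-1})$, $\Dx\S{0}(\tilde T_{\ll-1})$, $\Dl R$, and $\Dl(D\FT\circ \Kl{\leq})$ by the same formula. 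A Leibniz-type calculation then shows $\tilde T_{\ll}\in \XD{0}{\ss-\ll}{\k+\ll-\ll\eta}$, and hypothesis~\eqref{condlparam} with $i=\ll$ ensures that the relevant series defining $\S{0}(\tilde T_{\ll})$ and all of its pure $\Dx$-derivatives up to order $\ss-\ll$ converge in the appropriate weighted norms. The final boundedness of $\S{0}:\XD{\ll}{\ss}{\k}\to\XD{\ll}{\ss}{\k-N+1}$ will follow by tracking the norms through the inductive construction.

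The principal technical obstacle will be the bookkeeping for the mixed derivatives $\Dl^i \Dx^j$ of $\S{0}(T)$: one must verify that each term produced by the combined Leibniz expansion of $\Dl^i \Dx^j$ applied to the series for $\S{0}(T)$ respects the prescribed weight $\Vert x\Vert^{\k-N+1+i-(i+j)\eta}$, and that the resulting sums converge. The family of inequalities~\eqref{condlparam}, parametrized by $0\le i \le \ll$, is exactly the sharp requirement for all such series to converge, so once the inductive framework is set up, the verification amounts to a careful but routine calculation analogous to the one already performed for the pure $x$-derivatives in Proposition~\ref{prop:S0S1higherregularity}.
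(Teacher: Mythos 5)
Your plan coincides with the paper's own proof: differentiate the series~\eqref{defS0param} term by term in $\lambda$, check absolute convergence of the resulting sums using the $\lambda$-uniform estimates of Lemmas~\ref{lem:weakcontraction} and~\ref{uniformlambda}, identify $\Dl \S{0}(T)$ with $\S{0}(\tilde T)$ through the injectivity of $\L{0}$ and then upgrade its weight via $\S{0}(\tilde T)$, and finally iterate the same formula recursively in the number of $\lambda$-derivatives for item (2), exactly as the paper does. Only two bookkeeping details need adjusting: in the induction the recursively defined $\tilde T_i$ must be tracked in $\XD{\ll-i}{\ss-i}{\k-i(\eta-1)}$ (so that one $\lambda$-derivative is still available when item (1) is applied at the final step), not merely in $\XD{0}{\ss-i}{\k-i(\eta-1)}$ as written; and $\Dl R=\OO(\Vert x\Vert^{N})$ rather than $\OO(\Vert x\Vert^{N+1})$, since $p$ depends on $\lambda$, which still gives the weight $\k+1-\eta$ for $\tilde T$.
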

\begin{proof}
We have to check first that for any $T\in \XD{1}{\ss}{\k-N+1}$,
$$
\S{0}(T) \in \XD{0}{\ss}{\k-N+1},\qquad \Dl \S{0}(T) \in
\XD{0}{\ss-1}{\k-N+1-(\eta-1)}.
$$
The first relation, which corresponds to $\sigma=0$, follows from
Lemma~\ref{uniformlambda}. To deal with the second one, we proceed
as in the proof of Lemma \ref{lem:Skonlowregularity}. We take
derivatives  with respect to $\lambda$ formally and we check that
the different factors we obtain, which will be infinite sums, are
absolutely convergent, belong to~$\XD{0}{\ss-1}{\k-N+1-(\eta-1)}$
and are bounded. Indeed, we formally decompose $\Dl \S{0}(T) =
S_1+S_2$ with
\begin{align*}
S_1 = &\sum_{i=0}^{\infty} \left [ \prod_{j=0}^i \big (D\FT)^{-1} \circ \Kl{\leq} \circ  R^{j} \right ]
 \big [\Dl T \circ  R^i + (\Dx T\circ R^i )\Dl R^i\big ] \\
S_2= & \sum_{i=0}^{\infty} \sum_{m=0}^i\left(\prod_{l =0}^{m-1}
(D\FT)^{-1} \circ \Kl{\leq} \circ  R^l \right) \Dl\left((D\FT)^{-1}
\circ \Kl{\leq} \circ  R^m\right)
\\ & \times \left(\prod_{l =m+1}^{i} (D\FT)^{-1} \circ \Kl{\leq} \circ  R^l \right)\T \circ  R^i.
\end{align*}

It can be checked by induction that, if $i\ge 2$,
\begin{equation*}
\Dl R^i = \Dl R \circ R^{i-1} + \sum_{j=1}^{i-1} (\Dx R^j \circ
R^{i-j}) \Dl R \circ R^{i-j-1}.
\end{equation*}
Therefore, from item (i) of Lemma~\ref{uniformlambda}, if
$(x,\lambda)\in V_k\times \Lambda$,
\begin{align*}
\Vert \Dl R^i  (x,\lambda) \Vert  &\leq \frac{C}{(u+k+i)^{\a
N}}+\frac{C}{(u+k+i)^{\a \Ap\kd^{-1}}}\sum_{j=1}^{i-1}
\frac{1}{(u+k+j)^{\a (N-\Ap \kd^{-1})}} \\
&\leq \frac{C}{(u+k+i)^{\a \Ap\kd^{-1}}(u+k)^{\a (N-\Ap
\kd^{-1})-1}}+ \frac{C}{(u+k+i)^{\a N-1}}
\end{align*}
with $C$ independent of $\lambda$. Then, if $x\in V_k$ and $\lambda
\in \Lambda$, using the definition of $\S{0}$,
\begin{align*}
\Vert S_1(x,\lambda) \Vert & \leq C \Vert \S{0}(\Dl T)(x)\Vert + C
\Vert T \Vert_{\ss,\k}^{1}\frac{1}{(u+k)^{\a \B \ka^{-1}}}
\\
& \times \sum_{i=0}^{\infty}\left(\frac{(u+k)^{-\a
(1-\Ap\kd^{-1})}} {(u+k+i)^{\a(\k -\eta-\B
\ka^{-1}+\Ap\kd^{-1})}}+\frac{1} {(u+k+i)^{\a(\k -\eta-1)}}\right)
\\
&\leq C \Vert T
\Vert_{\ss,\k}^{1}(u+k)^{-\a(\k-N+1-(\eta-1))},
\end{align*}
where we have used (iii) of Lemma~\ref{uniformlambda} to bound
$\Vert \S{0}(\Dl T)(x)\Vert$. Then,
\begin{equation}\label{difparamS1}
\Vert S_1(x,\lambda) \Vert \leq C \Vert T \Vert_{\ss,\k}^{1}\Vert x
\Vert^{\k -N+1-(\eta-1)},\qquad x\in \Vr
\end{equation}
uniformly in $\lambda \in \Lambda$.

To deal with $S_2$, we first note that if $x\in V_k$ and $m\in \NN$,
then
\begin{equation*}
\Vert \Dl \big ( (\Dx P)^{-1} \circ \Kl{\leq} \circ R^m \big )
(x,\lambda) \Vert \leq \frac{C}{(u+k+m)^{\a(L-1)}}.
\end{equation*}
Then,  using Lemma~\ref{lem:weakcontraction},
\begin{equation*}
\Vert S_2(x,\lambda) \Vert \leq C \Vert T \Vert_{\ss,\k}^{1}
\sum_{i=0}^{\infty} \frac{(u+k)^{-\a \B \ka^{-1}}}{(u+k+i)^{\a(\k -
\B \ka^{-1})}} \sum_{m=0}^i \frac{1}{(u+k+m)^{\a(L-1)}}.
\end{equation*}
If $\a(L-1)<1$, then, since $\eta = 1+N-L$,
\begin{equation*}
\Vert S_2(x,\lambda) \Vert \leq C \Vert T
\Vert_{\ss,\k}^{1}\sum_{i=0}^{\infty} \frac{(u+k)^{-\a \B
\ka^{-1}}}{(u+k+i)^{\a(\k - \B \ka^{-1}+ L-1 -N+1)}} \leq C\Vert T
\Vert_{\ss,\k}^{1}(u+k)^{-\a( \k -\eta +1 -N+1)}
\end{equation*}
and we are done in this case. When $\a(L-1)=1$, in other words
$\eta=1$, we take a positive quantity $\varepsilon>0$, such that
$\a(L-1+ \varepsilon)> 1$ and $\k -\B \ka^{-1}-\varepsilon >N-1$
(this last condition can be fulfilled by hypothesis). Then
\begin{align*}
\Vert S_2(x,\lambda)\Vert &\leq C \Vert T
\Vert_{\ss,\k}^{1}\sum_{i=0}^{\infty} \frac{(u+k)^{-\a \B
\ka^{-1}}}{(u+k+i)^{\a(\k - \B \ka^{-1}-\varepsilon)}} \sum_{m=0}^i
\frac{1}{(u+k+m)^{1+\a \varepsilon} }\\ & \leq C \Vert T
\Vert_{\ss,\k}^{1}\sum_{i=0}^{\infty} \frac{(u+k)^{-\a (\B
\ka^{-1}+\varepsilon)}} {(u+k+i)^{\a(\k - \B \ka^{-1}-\varepsilon)}}
\leq C \Vert T \Vert_{\ss,\k}^{1}(u+k)^{-\a(\k -N+1)}.
\end{align*}
In any case, $\Vert S_2 (x,\lambda) \Vert \leq C \Vert T
\Vert_{\ss,\k}^{1}\Vert x \Vert^{\k-N+1-(\eta-1)}$. This bound
together with the corresponding one for $S_1$ in \eqref{difparamS1},
leads us to conclude that $\Dl \S{0}(T) \in
\XD{0}{0}{\k-N+1-(\eta-1)}$.

On the one hand, $\Dl \S{0}(T)$ and $\S{0}(\tilde{T})$ belong to
$\XD{0}{0}{\k-N+1-(\eta-1)}$ and both are solutions of the same
linear equation $\L{0} H = \tilde{T}$. Since, by (ii) of
Lemma~\ref{uniformlambda}, $\L{0}$ is  injective,
$$\Dl\S{0}(T)=\S{0}(\tilde{T}).$$
On the other hand, it is clear that $\tilde{T} \in
\XD{0}{\ss-1}{\k-\eta+1}$ because $T \in \XD{1}{\ss}{\k}$.
Therefore, using (iii) of Lemma~\ref{uniformlambda},
$\S{0}(\tilde{T}) \in \XD{0}{\ss-1}{\k-\eta+1-N+1}$ and
consequently, $\Dl\S{0}(T)$ belongs to
$\XD{0}{\ss-1}{\k-\eta+1-N+1}$. This ends the proof of the first
item of the lemma.

To deal with the second item, we perform an induction procedure. Let
$T\in \XD{\ll}{\ss}{\k}$ and $S=\S{0}(T)$. We have to prove that $S
\in \XD{\sigma}{\ss}{\k-N+1}$. The cases $\sigma=0,1$ are already
proven. Assume that $S\in \XD{\sigma-1}{\ss}{\k-N+1}$ for
$\sigma\leq \rl^{\leq}$. We define recursively for $0\leq i \leq
\sigma-1$:
\begin{align*}
S^i &=\Dl^i S, \\
T^i &=-\Dl (DP \circ \Kl{\leq}) S^{i-1}+ \Dx S^{i-1} \circ R \cdot
\Dl R + \Dl T^{i-1}.
\end{align*}
Note that, using~\eqref{derlambdaS0}, $S^i=\Dl^i S =\S{0}(T^i)$.
Moreover, since $S\in \XD{\sigma-1}{\ss}{\k-N+1}$,
$$
S^{i} \in \XD{\sigma-1-i}{\ss-i}{\k-N+1-i(\eta-1)},\qquad \Dx
S^{i-1} \XD{\sigma-i}{\ss-i}{\k-N+1-\eta-i(\eta-1)}.
$$
Using that $\eta=N-L+1$ and the above properties,
$$
\Dl (DP \circ \Kl{\leq}) S^{i-1} \in
\XD{\sigma-i}{\ss-i+1}{\k-i(\eta-1)},\qquad \Dx S^{i-1}\circ R \cdot
\Dl R \in \XD{\sigma-i}{\ss-i}{\k-i(\eta-1)}
$$
so that, by recurrence one gets $T^i \in
\XD{\sigma-i}{\ss-i}{\k-i(\eta-1)}$, if $0\leq i \leq \sigma-1$. We
take now $i=\sigma-1$ and we obtain that
$$
T^{\sigma-1} \in \XD{1}{\ss-(\sigma-1)}{\k-(\sigma-1)(\eta-1)}.
$$
Using 1) we deduce that $\Dl^{\sigma-1} S=
S^{\sigma-1}=\S{0}(T^{\sigma-1})\in
\XD{1}{\ss-(\sigma-1)}{\k-(\sigma-1)(\eta-1)-N+1}$ and therefore,
$$
\Dl^{\sigma} S =\Dl \S{0}(T^{\sigma-1}) \in
\XD{0}{\ss-\sigma}{\k-\sigma(\eta-1)-N+1}
$$
which implies that $S \in \XD{\sigma}{\ss}{\k-N+1}$.
\end{proof}
\subsection{End of the proof of Proposition~\ref{prop:theoremparam}}
We point out that,
since $\Kl{\leq}$ and $R$ satisfy~b) of
Theorem~\ref{maintheoremparam},  if $(x,\lambda)\in \Vr \times
\Lambda$,
\begin{equation*}
\Dl^i\Dx^j \TT{\k}(x,\lambda)=\OO(\Vert x \Vert^{\k-j}), \qquad
(i,j)\in \Sigma_{\rl^{\leq},\rfm},
\end{equation*}
and, since $\FR{\k}$ is the Taylor's remainder (with respect to the
$(x,y)$ variable) of $F\in \CC^{\DS}$,
\begin{equation*}
\Dl^i\Dx^j \FR{\k}(x,y,\lambda)=\OO(\Vert (x,y) \Vert^{\k-j}),\qquad
(i,j)\in \DS .
\end{equation*}
Moreover, these bounds are uniform on $\lambda \in \Lambda$.

Standard arguments allows us to apply the fixed point theorem to
obtain the existence of a solution $\Kl{>}$ of the fixed point
equation~\eqref{def:fpe:lambda} belonging to
$\XD{\rl^>_{\k}}{\nu_{\k}-1}{\k-N+1}$. Finally we recover the last
derivative as in the analogous result in \cite{BFdLM2007}.

\section{The analytic case}
In this section we deal with the conclusions of
Theorem~\ref{maintheorem} and \ref{maintheoremparam} in the analytic
case.  We assume that $F$, of the form~\eqref{defFparam}, is a real
analytic map, that $\Ap>\ddp =\bp$ and that $\Kl{\leq}, R$ are real
analytic functions in the complex extension  $\Omega(\r,\gamma)
\times \Lambda(\gamma)$ of $\Vr\times \Lambda$ given by
\begin{align*}
\Omega(\r,\gamma)&:=\{x \in \C^n : \re x \in \Vr, \;\; \Vert \im x
\Vert \leq \gamma \Vert \re x \Vert \}
\\
\Lambda(\gamma) &:=\{\lambda \in \C^{n'} : \re \lambda \in \Lambda,
\;\;  \Vert \im \lambda \Vert \leq \gamma^2\}
\end{align*}
with the norm $\Vert \cdot \Vert$ in $\C^n$ as
\begin{equation*}
\Vert  x\Vert = \max\{ \Vert \re  x\Vert , \Vert \im  x\Vert \}.
\end{equation*}
We note that, if $ x\in \Omega(\r,\gamma)$ with $\gamma\le 1$, then
$\Vert  x\Vert = \max\{\Vert \re  x\Vert, \Vert \im x\Vert \} =
\Vert \re  x\Vert$. We will use this fact along this section without
special mention.

It is clear that the facts in Section~\ref{preliminaryfacts} also
hold in this new setting, as well as the reformulation of the
problem as a fixed point equation,  $\Kl{>} = \S{0}\circ
\FF(\Kl{>})$ (see \eqref{def:fpe:lambda}), with $\S{0}$ and $\FF$
defined in~\eqref{defS0param} and~\eqref{defFcalparam}. Therefore,
it is enough to prove that the fixed point equation has an analytic
solution.

The first thing we  need to control is the weak contraction of the
nonlinear terms in the analytic case. For that we first need to
prove an analogous result to Lemma~\ref{Rlemma} to decompose
$\Omega(\r,\gamma)$ properly. For that, for a given $\r>0$, we
consider $u>0$ and $a_0>0$ such that $a_0 u^{-\a}=\r$ and sequences
$a_k\in \RR$, $k\ge 0$, and $b_k\in \RR$, $k\ge 1$, satisfying
condition~\eqref{akbkcond}. Moreover, for any $\gamma\le 1$, we
introduce
\begin{equation}
\label{def:Omega_k}
\Omega_{k}= \left \{  x\in \Omega(\r,\gamma) : \Vert  x\Vert \in
I_{k}:=\left [ \frac{b_{k+1}}{(u+k+1)^{\a}} ,
\frac{a_{k}}{(u+k)^{\a}}\right ]\right \} = \{x \in
\Omega(\r,\gamma): \re x \in V_k\},
\end{equation}
where the sets~$V_k$ where introduced in~\eqref{defVk}.

\begin{lemma}
Let $ p$ be the homogeneous polynomial with respect to $(x,y)$
defined by~\eqref{defFparam}. Let $\Ra:\Omega(\r,\gamma) \to
\C^n$ be an analytic map such that $\Ra( x,\lambda) -  x-  p(
x,0,\lambda) =\OO(\Vert x \Vert^{N+1})$ uniformly in $\Lambda$.

Assume that there exists $\r_0>0$ such that $ p$ satisfies the
corresponding conditions in H$\lambda$ and, moreover, $\Ap>\bp$.

Then for any $\ka<\ap$ and $\kb>\bp$, there exist $\r_1,\gamma_1>0$
such that for any $\gamma\leq \gamma_1$ and $\r\leq \r_1$ the
following claims hold.
\begin{enumerate}
\item[(1)] If $ (x,\lambda) \in \Omega(\r,\gamma)\times \Lambda(\gamma)$,
\begin{equation*}
\Vert \Ra( x,\lambda) - x\Vert \leq \kb \Vert  x\Vert^N,\qquad
\Vert  \Ra ( x, \lambda) \Vert \leq \Vert  x\Vert \big ( 1-\ka \Vert
x\Vert^{N-1}\big ).
\end{equation*}
\item[(2)] The set $\Omega(\r,\gamma)$ is invariant by $\Ra$, that is,
$\Ra(\Omega(\r,\gamma))\subset \Omega(\r,\gamma)$.
\item[(3)] Let $\{a_{k}\},\{b_{k}\}$ be the two sequences defined in Lemma~\ref{Rlemma} and
$\Omega_k$ defined in~\eqref{def:Omega_k}. We have that
\begin{equation*}
\overline{\Omega(\r,\gamma)}\backslash \{0\} = \cup_{k=0}^{\infty}
\Omega_k \;\;\; \text{ and } \;\;\; \Ra(\Omega_k) \subset
\Omega_{k+1}.
\end{equation*}
Consequently, if $x\in \Omega_k$, then one has that
$$
\frac{\a}{b(u+k+1+j)}\big (1+ \OO(k^{-\beta})\big ) \leq \Vert
\Ra^j(x) \Vert^{N-1} \leq \frac{\a}{a(u+k+j)} \big (1+
\OO(k^{-\beta}) \big ).
$$
\end{enumerate}
\end{lemma}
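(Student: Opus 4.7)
The plan is to carry the arguments of Lemma~\ref{Rlemma} through in the complex cone $\Omega(\r,\gamma)\times\Lambda(\gamma)$, exploiting that $\|x\|=\|\re x\|$ for $x\in\Omega(\r,\gamma)$ when $\gamma\le 1$ and that $p(x,0,\lambda)$ is polynomial in $x$ and analytic in $\lambda$. A Taylor expansion separately in $\im x$ (polynomial, even/odd parity split) and in $\im\lambda$ (analytic, Cauchy estimates on $\Lambda(\gamma)$) yields
\begin{equation*}
p(x,0,\lambda) = p(\re x,0,\re\lambda) + \Delta_1(x,\lambda),\qquad \|\Delta_1(x,\lambda)\|\le C(\gamma+\gamma^2)\|\re x\|^{N},
\end{equation*}
and analogously $f(x,0,\lambda) = f(\re x,0,\re\lambda) + O((\gamma+\gamma^{2})\|\re x\|^{N+1})$. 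From these expansions item~(1) follows immediately: the first inequality combines $\|p(\re x,0,\re\lambda)\|\le \bp^{\re\lambda}\|\re x\|^{N}\le \bp\|\re x\|^{N}$ with $\|\Delta_1\|$ and the remainder to obtain $\|\Ra(x,\lambda)-x\|\le \kb\|x\|^{N}$ for $\gamma,\r$ small (since $\kb>\bp$). For the second inequality I would split $\|\Ra\|=\max\{\|\re\Ra\|,\|\im\Ra\|\}$; the real part satisfies
\begin{equation*}
\|\re\Ra(x,\lambda)\| \le \|\re x + p(\re x,0,\re\lambda)\| + C(\gamma^{2}+\|\re x\|)\|\re x\|^{N} \le \|\re x\|\bigl(1-(\ap - C\gamma - C\|\re x\|)\|\re x\|^{N-1}\bigr),
\end{equation*}
while $\|\im\Ra(x,\lambda)\|\le \gamma\|\re x\|+O(\|\re x\|^{N+1})$ is automatically dominated by $\|\re x\|(1-\ka\|\re x\|^{N-1})$ for $\gamma<1/2$ and $\r$ small.

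Second, I would establish invariance (item~(2)). The inclusion $\re\Ra(x,\lambda)\in V_{\r}$ follows from Hypothesis~H3: the distance of $\re x+p(\re x,0,\re\lambda)$ to $V_{\r}^{c}$ is $\ge \CIn\|\re x\|^{N}$, and the complex perturbations in $\re\Ra$ are $O((\gamma^{2}+\|\re x\|)\|\re x\|^{N})$, hence absorbed for $\gamma,\r$ small. The critical point is the imaginary cone invariance $\|\im\Ra(x,\lambda)\|\le \gamma\|\re\Ra(x,\lambda)\|$. Taylor expanding around $\re x, \re\lambda$ gives
\begin{equation*}
\im\bigl(x+p(x,0,\lambda)\bigr) = \bigl[\Id+D_{x}p(\re x,0,\re\lambda)\bigr]\im x + O(\gamma^{2}\|\re x\|^{N}) + O(\gamma^{3}\|\re x\|^{N}),
\end{equation*}
so using the definition of $\Ap$ which entails $\|\Id+D_{x}p(\re x,0,\re\lambda)\|\le 1-\Ap\|\re x\|^{N-1}$ and $\|\im x\|\le\gamma\|\re x\|$,
\begin{equation*}
\|\im\Ra(x,\lambda)\| \le \gamma\|\re x\|\bigl(1-(\Ap-C\gamma-C\|\re x\|)\|\re x\|^{N-1}\bigr).
\end{equation*}
Combined with the lower bound $\|\re\Ra(x,\lambda)\|\ge \|\re x\|\bigl(1-(\bp+C\gamma^{2}+C\|\re x\|)\|\re x\|^{N-1}\bigr)$ from the triangle inequality, the required inequality reduces to $\Ap-\bp \ge C(\gamma+\gamma^{2}+\|\re x\|)$, which holds for $\gamma_{1},\r_{1}$ small precisely because $\Ap>\bp$.

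Finally, item~(3) transcribes the proof of Lemma~\ref{Rlemma} almost verbatim. The sequences $a_{k},b_{k}$ and the intervals $I_{k}$ are defined identically; since $\Omega_{k}$ is specified by $\|x\|\in I_{k}$ and we showed in~(1) that $\Ra_{\kb}(\|x\|)\le \|\Ra(x,\lambda)\|\le \Ra_{\ka}(\|x\|)$ (the lower bound by the triangle inequality), the squeezing argument of Lemma~\ref{Rlemma}(2) gives $\Ra(\Omega_{k})\subset\Omega_{k+1}$. The decomposition $\overline{\Omega(\r,\gamma)}\setminus\{0\}=\bigcup_{k\ge 0}\Omega_{k}$ is then immediate, and the asymptotic bounds $a_{k}=a_{0}(1+O(k^{-\beta}))$, $b_{k}=b_{0}(1+O(k^{-\beta}))$ carry over without change, yielding the claimed estimates on $\|\Ra^{j}(x)\|$.

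The main obstacle is the imaginary cone invariance in step two: the contractive factor $\Id+D_{x}p(\re x,0,\re\lambda)$ on $\im x$ must dominate the expansion of the real part, and the balance is dictated exactly by $\Ap-\bp$. This is the analytic incarnation of the strong stable/parabolic gap condition $\Ap>\bp$ and explains why this hypothesis appears only in the analytic part of Theorem~\ref{maintheorem}.
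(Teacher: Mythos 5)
Your proposal is correct and follows essentially the same route as the paper: you expand $\Ra(x,\lambda)-x$ around the real point $(\re x,\re\lambda)$ so that the first-order term $\ii D_xp(\re x,0,\re\lambda)\im x$ feeds only into the imaginary part, deduce item (1) from the definitions of $\ap,\bp$, obtain the cone invariance in item (2) from H3 together with the comparison of $\|\Id+D_xp(\re x,0,\re\lambda)\|\le 1-\Ap\|\re x\|^{N-1}$ against the lower bound $\|\re\Ra\|\ge\|\re x\|(1-(\bp+\OO(\gamma^2,\r))\|\re x\|^{N-1})$, which is exactly where $\Ap>\bp$ enters, and reduce item (3) to Lemma~\ref{Rlemma} via $\|x\|=\|\re x\|$ on the complex cone. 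Your implicit use of the same expansion for the $\OO(\Vert x\Vert^{N+1})$ remainder (so that its imaginary part gains a factor $\gamma$) is also how the paper treats it, through the formula~\eqref{exppolpanalytic} applied to $\chi=\Ra-x$.
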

\begin{proof}
We first note that, if $\chi(x,\lambda)$ is a real analytic
function,
\begin{align*}
\chi(x,\lambda) =& \chi(\re x ,\re \lambda) + \ii D \chi(\re x, \re
\lambda)[\im x, \im \lambda] \\&- \int_{0}^1 (1-\mu) D^2
\chi(x(\mu),\lambda(\mu)) [\im x, \im \lambda]^2 \dd \mu,
\end{align*}
with $x(\mu)=\re x+ \ii \mu \im x$ and $\lambda(\mu)=\re \lambda+
\ii \mu \im \lambda$.

 In addition, if $\chi,
\Dl \chi, \Dl^2 \chi =\OO(\Vert x \Vert^k)$, we have that, if
$\lambda \in \Lambda(\gamma)$:
\begin{equation}\label{exppolpanalytic}
\chi(x,\lambda)=\chi(\re x,\re \lambda) + \ii \Dx \chi(\re x,\re
\lambda) \im x + \gamma^2 \OO(\Vert x \Vert^k).
\end{equation}
The first item is a direct consequence of the above expression, for
$\chi(x,\lambda)=\Ra(x,\lambda)-x$, the
definition~\eqref{defconstantslambda} of $\ap, \bp$ and that
$\chi(x,\lambda)=p(x,0,\lambda)+ \OO(\Vert x \Vert^{N+1})$. The
second one is also a consequence of~\eqref{exppolpanalytic}. Indeed,
on the one hand, if $\gamma\leq \gamma_1$ and $\r \leq \r_1$,
writting $\Ra(x,\lambda)=x+\chi(x,\lambda)$,
\begin{align*}
\text{dist}(\re \Ra(x,\lambda), (\Vr)^{c}) &= \text{dist}(\re x +
p(\re x, 0, \re \lambda),(\Vr)^{c})- C \gamma^2 \Vert x \Vert^N -
C\Vert x \Vert^{N+1}\\ &\geq \Vert x \Vert^{N} (\CIn -
\OO(\gamma_1^2,\r_1)) \geq \frac{\CIn}{2} \Vert x \Vert^N,
\end{align*}
taking $\gamma_1, \r_1$ small enough. On the other hand,
\begin{align*}
\Vert \re \Ra(x,\lambda)\Vert \geq&  \Vert \re x\Vert (1- (\bp +
\OO(\gamma_1^2+\r_1^2)) \Vert x \Vert^{N-1}),
\\
\Vert \im \Ra (x,\lambda) \Vert  \leq& \Vert (\Id + \Dx p(\re x, 0, \re \lambda)) \im x \Vert + C\gamma^2 \Vert x \Vert^{N}\\
\leq & \gamma (1 - (\Ap+\OO(\gamma_1,\r_1)) \Vert x \Vert^{N-1} )
\Vert \re x\Vert
\end{align*}
and then if $\Ap>\bp$, taking $\r_1,\gamma_1$ small enough, $\Vert
\im \Ra(x,\lambda) \Vert \leq \gamma \Vert \re \Ra(x,\lambda) \Vert$
for any $\gamma \leq \gamma_1$.

Finally, the third item is a consequence of Lemma~\ref{Rlemma},
item~(2) and the fact that $x \in \Omega_k$ if and only if $\re x
\in V_k$ and $\Vert \im x \Vert \leq \gamma \Vert \re x \Vert =
\Vert x \Vert$.
\end{proof}

Let $U(\r,\gamma)=\Omega(\r,\gamma) \times \Lambda(\gamma)$. We
define the Banach space of analytic functions
$$
\mathcal{Z}_k = \{ h : U(\r,\gamma)\to \C^{n+m},\; \text{real
analytic, such that\;} \Vert h \Vert_k  <\infty\},
$$
where
$$
\Vert h \Vert_k = \sup_{(x,\lambda) \in U(\r,\gamma)} \frac{\Vert
h(x,\lambda)\Vert} {\Vert x\Vert^k}.
$$

From formula~\eqref{exppolpanalytic} applied to
$(D\FT)^{-1}(\Kl{\leq})(x)$ one can easily prove that
Lemma~\ref{lem:weakexpansion} holds true for $x\in
\Omega(\r,\gamma)$. As a consequence, if the scaling parameter is
small, bound~\eqref{bound:weakcontraction1} in
Lemma~\ref{lem:weakcontraction} is also true for $x\in \Omega_k$.

A proof analogous to the ones of Lemmas~\ref{lem:Ljonetoone}
and~\ref{lem:Skonlowregularity} for the continuous case proves that
a) the operator $\L{0}:\mathcal{Z}_{\k} \to \CC^{\omega}$, where
$\CC^{\omega}$ is the space of analytic functions on $U(\r,\gamma)$,
is continuous and one to one, and b) the linear operator
$\S{0}:\mathcal{Z}_{\k} \to \mathcal{Z}_{\k-N+1}$ is well defined
and bounded provided $\k-N+1-Ba^{-1}>0$. In addition, in the same
way as in Lemma~\ref{uniformlambda}, we obtain that there are bounds
of the norms of $\S{0}$ uniform in $\lambda\in \Lambda$.

Finally, one easily checks that the operator $\S{0} \circ \FF$ is
contractive on a suitable open ball of $\mathcal{Z}_{\ell-N+1}$. We
skip the details which are very similar to the ones
in~\cite{BFdLM2007}. This ends the proof in the analytic case.

It only remains to deal with the $\CC^{\Sigma_{s,\omega}}$ case. We
first note that, for any $\lambda\in \Lambda$ fixed, $K(\cdot,
\lambda)$ is analytic in $\Omega(\r,\gamma)$ for $\r,\gamma$ small
enough independent of $\lambda$. Moreover, since
$\CC^{\Sigma_{s,\omega}} \subset \CC^{\Sigma_{s,\infty}}$, given $F \in \CC^{\Sigma_{s,\omega}}$ we also
have that $K\in \CC^{\Sigma_{s,\infty}}$. Therefore, $K \in
\CC^{\Sigma_{s,\omega}}$.

\section{The Flow case}\label{sec:flowcase}
In this section we prove Theorem~\ref{maintheoremflow}, the
analogous result of Theorem~\ref{maintheoremparam} for flows.

The proof is performed in two steps in Sections~\ref{flowstomaps}
and~\ref{mapstoflows} below. The first step is to see that the
Poincar\'e map $F$ associated to the periodic vector field~$X$
in~\eqref{defX} has an invariant parametrization~$K$ and a reparametrization~$R$
satisfying the invariance equation $F\circ K = K\circ
R$. To do so we apply Theorem~\ref{maintheoremparam}. The second
step is to check that the invariance
condition~\eqref{invcondtheoremflow} for flows:
\begin{equation}
\label{invcondflowprova} \varphi(u;t,K(x,t,\lambda),\lambda)-
K(\psi(u;t,x,\lambda),u,\lambda) =0
\end{equation}
is satisfied for $K$, where $\varphi$ is the flow of $X$ and $\psi$ is the flow of a vector
field $Y$ on $\RR^n$ to be determined.

We assume that the vector field $ X\in \CC^{\DS}$ where in the
definition~\eqref{defCDS} of $\CC^{\DS}$ we take $z=(x,y)$ and
$\mu=(t,\lambda)$. We will denote by $D_z$ and $D_{\mu}$ the
derivatives with respect to these variables.

\subsection{From flows to maps}\label{flowstomaps}
Assume that $X\in \CC^{\DS}$ is a $T$-periodic vector field of the
form~\eqref{defX}
\begin{equation}\label{defXproof}
X(x,y,t,\lambda) = \left ( \begin{array}{c}  p(x,y,\lambda) +
f(x,y,t,\lambda) \\  q(x,y,\lambda)+
g(x,y,t,\lambda)\end{array}\right ),
\end{equation}
that $p$ satisfies H$\lambda$ and let $\Kl{\leq},Y\in
\CC^{\Sigma_{s^{\leq}, \rfm}}$ satisfying items (a), (b) and (c) in
Theorem~\ref{maintheoremflow}. In particular we have that condition
\eqref{HIKlflow} is satisfied, namely,
$$
X(\Kl{\leq}(x,t,\lambda),t,\lambda) - D \Kl{\leq}(x,t,\lambda)
Y(x,\lambda) - \partial_t \Kl{\leq}(x,t,\lambda) =\OO(\Vert x
\Vert^{\k})
$$
for a given $\k$ such that
$\ri<\k\leq r$.

We denote by $\varphi(u;t,x,y,\lambda)$ and $\psi(u;t,x,\lambda)$
the associated flows of $\dot{z}=X(z,t,\lambda)$, $z=(x,y)$, and
$\dot{x} =Y(x,\lambda)$ respectively. For $t\in \RR$ and $u\in
[t,t+T]$,
\begin{equation}\label{invcondordlflow}
\varphi(u;t,\Kl{\leq}(x,t,\lambda),\lambda)-
\Kl{\leq}(\psi(u;t,x,\lambda),u,\lambda) =\OO(\Vert x \Vert^{\k}),
\end{equation}
uniformly in $u,\lambda$. The proof is a
consequence of Gronwall's lemma, \eqref{invcondtheoremflow} and the $\CC^0$
dependence of $ \Kl{\leq}$ with respect to $t$.

We introduce the
Poincar\'e maps $F(x,y,t,\lambda) = \varphi(t+T;t,x,y,\lambda)$  and $R(x,\lambda) =
\psi(T;0,x,\lambda)=\psi(t+T;t,x,\lambda)$.
Applying~\eqref{invcondordlflow} to $u=t+T$, we obtain that
\begin{equation}\label{invaproxflowproof}
F(\Kl{\leq}(x,t,\lambda),t,\lambda) -
\Kl{\leq}(R(x,\lambda),t,\lambda) = \OO(\Vert x \Vert^{\k}).
\end{equation}

We want to apply Theorem~\ref{maintheoremparam}, so we have to check
the setting and hypotheses of that theorem for~$F$.

By Hypothesis HP and, since $X$ is of the form~\eqref{defXproof},
for any $(x,y) \in B_{\rho}$, we have $\Vert X(x,y,t,\lambda) \Vert
\leq C \rho^N$. Then, on the one hand, the flow
$\varphi(u;t,x,y,\lambda)$ is well defined for $u\in [t,t+T]$ if
$(x,y)\in B_{\varrho}$ and $\varrho$ is small enough. On the other
hand, by Gronwall's lemma,
\begin{equation}\label{boundvarphiflow}
\Vert \varphi(u;t,x,y,\lambda) \Vert \leq C \Vert (x,y) \Vert,\qquad
(u,x,y,\lambda)\in [t,t+T]\times B_{\varrho}\times \Lambda.
\end{equation}

Now we check that $F$ has the form~\eqref{defFparam}. Applying Taylor's theorem to
$\varphi(u;t,x,y,\lambda)$, with respect to $u$:
\begin{equation*}
\begin{aligned}
F(x,y,\lambda,t) =& \varphi(t+T;t,x,y,\lambda) = \left
(\begin{array}{c} x \\ y \end{array}\right ) + T \left (
\begin{array}{c}   p(x,y,\lambda) + f(x,y,t,\lambda) \\
q(x,y,\lambda)+g(x,y,t,\lambda)\end{array}\right ) \\ & +
 \int_{t}^{t+T} (t+T-u) D_z X(\varphi(u; t,x,y,\lambda),u,\lambda)  X(\varphi(u; t,x,y,\lambda),u,\lambda) \dd u\\
&+\int_{t}^{t+T}(t+T-u) D_t X(\varphi(u; t,x,y,\lambda),u,\lambda)
\dd u.
\end{aligned}
\end{equation*}
Using bound~\eqref{boundvarphiflow} in the above formula for the
Poincar\'{e} map $F$, we see that $F$ has the form \eqref{defFparam}
and satisfies H$\lambda$ for any fixed $t\in \RR$ since $p$ does
not depend on $t$. Moreover, using that $f$ and $g$ are periodic
with respect to $t$, $D_{(x,y)}^2 f,
D_{(x,y)}^2 g$ are bounded and they satisfy Hypothesis HP. We also have that the remainder
$(\tilde{f},\tilde{g})= F- \Id -(Tp,Tq)$ satisfies Hypothesis HP.

Concerning the items of Theorem~\ref{maintheoremparam}, (a) follows
from the hypotheses and general regularity results for flows, (b)
for $\Kl{\leq}$ also follows from hypothesis and (c) have already
been obtained in~\eqref{invaproxflowproof}.

It remains to check that $R(x,\lambda)= \psi(T;0,x,\lambda)$
satisfies (b) in Theorem~\ref{maintheoremparam}. Namely, defining
$\Delta R(x,\lambda):=R(x,\lambda) - x-Tp(x,0,\lambda)$ we have to
check that, uniformly in $\lambda\in \Lambda$,
$$
D_{\lambda}^j \Dx^i \Delta R(x,\lambda) = \OO(\Vert x
\Vert^{N+1-i}),\qquad (i,j)\in \CC^{\Sigma_{s^{\leq},\rfm}}.
$$
These bounds are consequence of the following elementary result, whose proof we omit.
\begin{lemma}
Let $Z:\Vro\times \Lambda \to \RR^n$ be a vector field of the form
$Z(x,\lambda) = Z_{0}(x,\lambda) + Z_1(x,\lambda)$. Let
$\chi(t;x,\lambda)$ be its flow.

Let $\sigma\geq 0$ and $\nu\geq 2$. Assume that  $Z_0, Z_1\in
\CC^{\Sigma_{\sigma,\nu}}$ and that there exist $l>k\geq 2$ such
that, for all $(i,j) \in \CC^{\Sigma_{\sigma,\nu}}$:
\begin{equation*}
\Dl^i \Dx^j Z_0(x,\lambda) = \OO(\Vert x
\Vert^{k-j}),\qquad \Dl^i \Dx^j Z_1(x,\lambda) =\OO(\Vert x
\Vert^{l-j})
\end{equation*}
uniformly in $\lambda\in \Lambda$.

Then for any $u_0>0$ there exists $\r$ small enough such that, if
$x\in V_{\r/2}$ and $u\in [0,u_0]$, the flow $\chi$ satisfies
$\chi(u;x,\lambda)= x + u Z_0(x,\lambda) +
\widetilde{Z}_1(u,x,\lambda)\in \Vr$ with
$$
\Dl^i \Dx^j \widetilde{Z}_1(u,x,\lambda) =\OO(\Vert x
\Vert^{k+1-j}),\qquad (i,j)\in \Sigma_{\sigma,\nu}
$$
uniformly in $(u,\lambda)\in [0,u_0]\times \Lambda$.
\end{lemma}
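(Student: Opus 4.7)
The plan is to treat the flow as a small perturbation of the identity in the regime $x \in V_{\rho/2}$, $u \in [0,u_0]$, by exploiting that $Z_0(x,\lambda) = \OO(\|x\|^k)$ with $k\ge 2$. First, since $\|Z(x,\lambda)\| \le C\|x\|^k \le C\rho^{k-1}\|x\|$, a Gronwall argument shows that for $\rho$ small (depending on $u_0$), the flow $\chi(u;x,\lambda)$ exists for all $u\in[0,u_0]$ when $x\in V_{\rho/2}$, remains in $V_\rho$, and satisfies $\|\chi(u;x,\lambda)\| \le 2\|x\|$ uniformly in $(u,\lambda)$. I would fix $\rho$ once and for all at this step.

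Next, I would define $\widetilde Z_1(u,x,\lambda) := \chi(u;x,\lambda) - x - u Z_0(x,\lambda)$ and use the integral equation
\begin{equation*}
\widetilde Z_1(u,x,\lambda) = \int_0^u \bigl[Z_0(\chi(s;x,\lambda),\lambda) - Z_0(x,\lambda)\bigr]\,ds + \int_0^u Z_1(\chi(s;x,\lambda),\lambda)\,ds.
\end{equation*}
For the case $i=j=0$, the second integral is $\OO(\|x\|^l) \subset \OO(\|x\|^{k+1})$ since $l>k$. For the first integral, the mean value theorem and $\chi(s;x,\lambda)-x = \OO(\|x\|^k)$ give $Z_0(\chi,\lambda)-Z_0(x,\lambda) = \OO(\|x\|^{k-1}\cdot\|x\|^k) = \OO(\|x\|^{2k-1})$, and $2k-1 \ge k+1$ because $k\ge 2$. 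This yields the claim for $(i,j)=(0,0)$.

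For the general case $(i,j)\in \Sigma_{\sigma,\nu}$, I would proceed by induction on $i+j$, differentiating the integral identity above and invoking the variational equations $\partial_u D_x\chi = D_zZ(\chi,\lambda)D_x\chi$ (with $D_x\chi(0)=\Id$) and analogously for $\Dl$. The structural point is that each extra $x$-derivative removes one power of $\|x\|$ from $Z_0$ and $Z_1$ (so the bounds remain $\OO(\|x\|^{k-j})$ and $\OO(\|x\|^{l-j})$), while each extra $\lambda$-derivative preserves the order; by Gronwall applied to the linear variational systems, $\Dl^i D_x^j\chi$ stays $\OO(1)$ (for $j=0$, $i\ge 1$ one uses $\chi$ is close to $\Id$ plus a small correction). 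Substituting back and estimating term by term using Lemma~\ref{Banachprop} (1) gives the desired bound $\Dl^i D_x^j\widetilde Z_1 = \OO(\|x\|^{k+1-j})$.

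The main obstacle is the bookkeeping of higher derivatives: the Faà di Bruno style expansions of $\Dl^i D_x^j[Z_0\circ\chi]$ produce many cross terms mixing derivatives of $Z_0$, $Z_1$, and of $\chi$ itself, and one must verify uniformly that every such term carries at least $\|x\|^{k+1-j}$. The cleanest way is to recast the estimates in the scale-invariant spaces $\X{\nu}{k}$ used earlier and check that the Picard iteration defining $\chi$ preserves the relevant norms, so that passing to the limit delivers bounds uniform in $(u,\lambda)\in[0,u_0]\times\Lambda$ in one stroke.
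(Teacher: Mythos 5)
The paper does not actually contain a proof of this lemma --- it is introduced with ``whose proof we omit'' --- so there is nothing of the authors' to compare your argument with; I can only assess it on its own terms. Your route (the integral identity for $\widetilde Z_1$, direct estimation of the $Z_1$-integral, a mean-value type estimate for the $Z_0$-increment, and then propagation of $\Dl^i\Dx^j$ through the variational equations with a Fa\`a di Bruno power count) is the natural one, and the power count does close: a typical term of $\Dx^j[Z_1\circ\chi]$ has the form $D^mZ_1(\chi)\,(\Dx^{j_1}\chi,\dots,\Dx^{j_m}\chi)$ with $j_1+\cdots+j_m=j$, and using $\Dx\chi=\Id+\OO(\Vert x\Vert^{k-1})$, $\Dx^{j_i}\chi=\OO(\Vert x\Vert^{k-j_i})$ for $j_i\ge 2$ and $D^mZ_1(\chi)=\OO(\Vert x\Vert^{l-m})$, its order is $\Vert x\Vert^{l+b(k-1)-j}$ ($b$ = number of higher-order factors), hence at least $\Vert x\Vert^{k+1-j}$; the $Z_0$-difference gives at worst $\Vert x\Vert^{2k-1-j}$, which again suffices since $k\ge 2$. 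Note also that your step ``$\OO(\Vert x\Vert^{l})\subset\OO(\Vert x\Vert^{k+1})$'' uses $l\ge k+1$, i.e.\ the integer reading of $l>k$ (which is what the application $k=N$, $l=N+1$ provides).

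Two steps need repair. First, the assertion that Gronwall makes the orbit ``remain in $\Vr$'' only controls $\Vert\chi(u;x,\lambda)\Vert$, i.e.\ membership in the ball $B_{\r}$; it does not give membership in the star-shaped set $V$, which is the nontrivial half of the claim (and without it $\chi$ need not even be defined, since $Z$ lives on $\Vro\times\Lambda$): for $x$ close to $\partial V$ a displacement that is small relative to $\Vert x\Vert$ can still leave $V$. In the situation where the lemma is invoked, $Z_0=p(\cdot,0,\lambda)$ and H$\lambda$ is in force, so one must use the inward-pointing condition $\mathrm{dist}(x+p(x,0,\lambda),(\Vr)^{c})\ge\CIn\Vert x\Vert^{N}$ (propagated to intermediate times, e.g.\ via convexity of $V\cup\{0\}$ or a direct estimate on $x+\tau p(x,0,\lambda)$) to exclude escape from $V$ on $[0,u_0]$; this hypothesis is nowhere used in your sketch. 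Second, the claim that ``$\Dl^i\Dx^j\chi$ stays $\OO(1)$'' is false in general for $j>k$: the variational equations only give $\Dl^i\Dx^j\chi=\OO(\Vert x\Vert^{k-j})$ for $j\ge 2$, which is unbounded near the origin. This does not sink the strategy --- the target $\OO(\Vert x\Vert^{k+1-j})$ degrades with $j$ in exactly the right way, and the count above works with the correct weighted bounds (your suggestion of working in the spaces $\X{\nu}{k}$ effectively does this) --- but the induction must be run with those bounds, not with uniform boundedness of the derivatives of the flow. Finally, the mean value theorem along the straight segment from $x$ to $\chi(s;x)$ is not legitimate since $V$ is only star-shaped; replace it by $Z_0(\chi(s))-Z_0(x)=\int_0^s DZ_0(\chi(\tau))\,Z(\chi(\tau))\,d\tau$, which only evaluates along the orbit.
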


Summarizing, let $\max\{\ri,\ril\} <\k\leq r$, $\Kl{\leq}$ and $Y$
be such that~\eqref{invcondordlflow} holds true. Applying
Theorem~\ref{maintheoremparam} to the Poincar\'{e} map
$F(x,y,t,\lambda)= \varphi(t+T;t,x,y,\lambda)$ with
$R(x,\lambda)=\psi(t+T;t,x,\lambda)$, we obtain a solution
$K=\Kl{\leq}+\Kl{>}\in \CC^{\Sigma_{s^>,\rv}}$ of the invariance
condition
\begin{equation}
\label{invcondflowparam} F(K(x,t,\lambda),t,\lambda)
=K(\psi(t+T;t,x,\lambda),t,\lambda)
\end{equation}
with $\Kl{>}(x,t,\lambda) = \OO(\Vert x \Vert^{\k-N+1})$ uniformly
in $\lambda$. Moreover, by the uniqueness of the solution, $\Kl{>}$
(and consequently $K$) is periodic with respect to $t$.

\subsection{From maps to periodic flows}\label{mapstoflows}
In this section we prove that the parametrization $K$ found in the
previous Section~\ref{flowstomaps} satisfies the invariance
condition~\eqref{invcondflowprova} for flows. To avoid cumbersome
notations, in this section we will skip the dependence on $\lambda$.

Using the properties of general solutions of vector fields, the definitions
of~$F$ and~$R$ and~\eqref{invcondflowparam} we obtain
$$
K(x,s)=\varphi(s;s+T,K(R(x),s)),\qquad R(\psi(s;t,x)) =
\psi(s;t,R(x)).
$$
We define
\[
\mathcal{K}_s(x,t) = \varphi(t;s,K(\psi(s;t,x),s)).
\]
We have $\mathcal{K}_t(x,t)=K(x,t)$ and
\begin{align*}
F(\mathcal{K}_s(x,t),t) &= \varphi(t+T;s,K(\psi(s;t,x),s)) =
\varphi(t+T;s+T, K(\psi(s;t,R(x)),s))\\ &=
\varphi(t;s,K(\psi(s;t-T,x,s))), \\
\mathcal{K}_s(R(x),t))&=\varphi(t;s,K(\psi(s;t,R(x),s)))=
\varphi(t;s,K(\psi(s;t-T,x),s)).
\end{align*}
Consequently, $\mathcal{K}_s(x,t)$ satisfies the invariant
condition~\eqref{invcondflowprova} for any $s$.

Applying again Taylor's theorem,
\begin{align*}
\mathcal{K}_s(x,t) =&\varphi(t;s,K(\psi(s,t,x),s)) =
\varphi(t;s,K^{\leq}(\psi(s;t,x),s)) \\ &+ \int_{0}^1
D\varphi(t;s,K^{\leq}(\psi(s;t,x),s) +
 w K^>(\psi(s;t,x),s) )  K^>(\psi(s;t,x),s)\, d w
\end{align*}
and, applying equality \eqref{invcondordlflow} to $\psi(s;t,x)$,
\begin{align*}
\mathcal{K}_s(x,t) -& K^{\leq}(x,t) =\OO(\Vert x \Vert^{\ell})
+ \int_0^1 D K^{\leq}(x+w(\psi(s;t,x)-x),t)[\psi(s;t,x)-x]\,dw
\\&+ \int_{0}^1 D\varphi(t;s,K^{\leq}(\psi(s;t,x),s) +
 w K^>(\psi(s;t,x),s) )  K^>(\psi(s;t,x),s) \, dw  .
\end{align*}
Therefore, since $\psi(s;t,0) = 0$ and $\psi(s;t,x) = x + \OO(\|x\|^N)$, we have that $\mathcal{K}_s(x,t) -
K^{\leq}(x,t) =\OO(\Vert x\Vert^{\ell-N+1})$ and this implies, by the
uniqueness statement in Theorem~\ref{maintheoremparam} that
$\mathcal{K}_s(x,t) = K(x,t)$. Then
$$
K(\psi(s;t,x),s)= \varphi(s;t,\mathcal{K}_s(x,t)) =
\varphi(s;t,K(x,t))
$$
and the proof is complete.

\section{Acknowledgments}
I.B and P.M. have been partially supported by the Spanish Government
MINECO-FEDER grant MTM2015-65715-P and the Catalan Government grant 2014SGR504.
The work of E.F. has been partially supported by the Spanish
Government grant MTM2013-41168P and the Catalan Government grant
2014SGR-1145.

\section*{References}
\bibliography{references}
\bibliographystyle{alpha}
\end{document}